\def\R{{ \mathbb{R}}}
\newtheorem{theorem}{Theorem}
\newtheorem{lemma}[theorem]{Lemma}
\newenvironment{proof}[1][Proof]{\textbf{#1.} }{\ \rule{0.5em}{0.5em}}
\def\cal{\mathcal}
\renewcommand{\geq}{\geqslant}
\def\leq{\leqslant}
\def\cal{\mathcal}
\def\1{{\mathbf{1}}}
\def\1{{\mathbf{1}}}
\def\0.5{{\frac{1}{2}}}
\renewcommand{\thefootnote}{\fnsymbol{footnote}}
\begin{document}
\renewcommand{\thefootnote}{\arabic{footnote}}
\begin{center}
{\Large{\bf Parameter estimation for SDEs related to stationary Gaussian processes }}
\\~\\
 Khalifa Es-Sebaiy\footnote{National
School of Applied Sciences - Marrakesh, Cadi Ayyad University,
Marrakesh, Morocco. Email: {\tt k.essebaiy@uca.ma}} and  Frederi G. Viens \footnote{%
Dept. Statistics and Dept. Mathematics, Purdue University, 150 N.
University
St., West Lafayette, IN 47907-2067, USA. E-mail: \texttt{viens@purdue.edu}}\\
{\it    Cadi Ayyad University  and Purdue University}\\~\\
\end{center}

{\small \noindent {\bf Abstract:} In this paper, we study   central and non-central
limit theorems for partial sum of functionals of general stationary Gaussian
fields. We apply our result to study drift parameter estimation problems
for some stochastic differential equations related to stationary Gaussian processes.\\
}

{\small \noindent {\bf Key words}: Central and non-central
limit theorems; stationary Gaussian processes; Stein's method;  drift parameter estimation; fractional Gaussian processes.
\\


\section{Introduction}

While the statistical inference of It\^o type diffusions has a long history, the statistical analysis for equations driven by fractional Brownian motion (fBm) is obviously more recent. The development of stochastic calculus with respect to the fBm allowed to study such models. We will recall  several approaches to estimate the parameters in fractional models but we mention that the below list is not exhaustive:
\begin{description}
\item{$\bullet$ } The MLE approach in \cite{KL}, \cite{TV}. In general the techniques used to construct
maximum likelihood estimators (MLE)  for the drift parameter
 are based on Girsanov transforms for fBm and depend on  the properties of the deterministic
fractional operators (determined by the Hurst parameter) related to the fBm. In general, the MLE is not easily computable.

\item{$\bullet$ }A least squares approach has been proposed in \cite{HN}. The study of the asymptotic properties of the estimator is based on certain criteria formulated in terms of the Malliavin calculus (see \cite{NP-book}). In the ergodic case, the statistical inference for several fractional Ornstein-Uhlenbeck (fOU) models has been recently developed  in the papers \cite{HN}, \cite{AM}, \cite{AV}, \cite{EEV}, \cite{HS}, \cite{CE}. The case of non-ergodic fOU process of the first kind and of the second kind  can be found in \cite{BEO} and  \cite{EET} respectively.
\end{description}

Our aim is to bring a new idea to develop  the statistical inference for stochastic differential equations related to stationary Gaussian processes  by proposing a suitable criteria. Our approach is based on Malliavin calculus and it makes in principle our estimator easier to be simulated. Moreover, the models studied in \cite{HN}, \cite{AM}, \cite{AV}, \cite{EEV} become particular cases in our approach (see section 4).

\section{Elements of Gaussian analysis and Malliavin calculus}
For the essential elements of Gaussian analysis and Malliavin calculus that are used in this paper see the  references (\cite{nualart-book}, \cite{NP-book}).

Now recall that, if $X, Y$ are two real-valued random variables, then the total variation distance between the law of $X$ and the law of $Y$ is given by
\[d_{TV}\left(X,Y\right)=\sup_{A\in \mathcal{B}(\R)}\left|P\left[X\in A\right]-P\left[Y\in A\right]\right|.\]
If $X, Y$ are two real-valued integrable random variables, then the Wasserstein distance between the law of $X$ and the law of $Y$ is given by
\[d_W\left(X,Y\right)=\sup_{f\in Lip(1)}\left|Ef(X)-Ef(Y)\right|\]
where $ Lip(1)$ indicates the collection of all Lipschitz functions with Lipschitz constant $\leq1$.\\

The following well-known direct consequence of the Borel-Cantelli Lemma (see
e.g. \cite{KN}), will allows us to turn convergence rates in the $p$-th mean
into pathwise convergence rates.
\begin{lemma}
\label{Borel-Cantelli} Let $\gamma >0$ and $p_{0}\in \mathbb{N}$.
Moreover let $(Z_{n})_{n\in \mathbb{N}}$ be a sequence of random variables.
If for every $p\geq p_{0}$ there exists a constant $c_{p}>0$ such that for
all $n\in \mathbb{N}$,
\begin{equation*}
(\mathbb{E}|Z_{n}|^{p})^{1/p}\leqslant c_{p}\cdot n^{-\gamma },
\end{equation*}%
then for all $\varepsilon >0$ there exists a random variable $\eta
_{\varepsilon }$ such that
\begin{equation*}
|Z_{n}|\leqslant \eta _{\varepsilon }\cdot n^{-\gamma +\varepsilon }\quad %
\mbox{almost surely}
\end{equation*}%
for all $n\in \mathbb{N}$. Moreover, $\mathbb{E}|\eta _{\varepsilon
}|^{p}<\infty $ for all $p\geq 1$.
\end{lemma}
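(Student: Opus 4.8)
The plan is to exhibit an explicit candidate for $\eta_\varepsilon$ and then verify its integrability. Fix $\varepsilon > 0$ and set
\[
\eta_\varepsilon := \sup_{n \geq 1} |Z_n|\, n^{\gamma - \varepsilon}.
\]
With this definition the pathwise bound $|Z_n| \leq \eta_\varepsilon\, n^{-\gamma + \varepsilon}$ holds for every $n$ by construction, so the entire statement reduces to showing that $\eta_\varepsilon$ is finite almost surely and that $\mathbb{E}|\eta_\varepsilon|^p < \infty$ for all $p \geq 1$; the former is of course a consequence of the latter.

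First I would bound the supremum by the corresponding sum, which is the step where the Borel--Cantelli philosophy enters: for any exponent $p \geq p_0$,
\[
\mathbb{E}\left[\eta_\varepsilon^p\right] = \mathbb{E}\left[\sup_{n\geq 1} \left(|Z_n|\,n^{\gamma-\varepsilon}\right)^p\right] \leq \sum_{n\geq 1} n^{p(\gamma-\varepsilon)}\, \mathbb{E}|Z_n|^p.
\]
Plugging in the hypothesis $\mathbb{E}|Z_n|^p \leq c_p^p\, n^{-p\gamma}$ collapses the $n^{p\gamma}$ factors and leaves $\mathbb{E}[\eta_\varepsilon^p] \leq c_p^p \sum_{n\geq 1} n^{-p\varepsilon}$.

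Next I would simply choose $p$ large. The series $\sum_n n^{-p\varepsilon}$ converges precisely when $p\varepsilon > 1$, so picking any $p \geq \max(p_0, \lceil 2/\varepsilon\rceil)$ makes the right-hand side finite; hence $\mathbb{E}|\eta_\varepsilon|^p < \infty$ for all sufficiently large $p$, which in particular forces $\eta_\varepsilon < \infty$ almost surely. Finally, to upgrade from ``large $p$'' to all $p \geq 1$, I would invoke the monotonicity of $L^p$-norms on the probability space: since finiteness of a high moment implies finiteness of every lower moment (by Jensen's inequality, or H\"older), $\mathbb{E}|\eta_\varepsilon|^p < \infty$ for every $p \geq 1$.

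There is no genuinely hard step here; the argument is routine once the candidate $\eta_\varepsilon$ is chosen. The only points requiring care are the interchange $\sup_n \leq \sum_n$ (valid because all terms are nonnegative), the summability threshold $p\varepsilon > 1$ that dictates how large $p$ must be taken, and the observation that on a probability space a single finite high moment yields all lower moments --- so that the constraint ``$p$ large'' in the integrability estimate does not weaken the final conclusion that holds for every $p \geq 1$.
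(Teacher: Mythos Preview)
Your argument is correct and is essentially the standard proof of this fact. Note, however, that the paper does not actually supply a proof of this lemma: it simply cites it as a known consequence of Borel--Cantelli from \cite{KN} (Kloeden--Neuenkirch). Your construction of $\eta_\varepsilon = \sup_{n\ge 1}|Z_n|\,n^{\gamma-\varepsilon}$, the domination $\sup\le\sum$, the choice of $p$ large enough that $p\varepsilon>1$, and the final downgrade to all $p\ge 1$ via monotonicity of $L^p$-norms on a probability space together give exactly the kind of elementary self-contained proof the paper omits.
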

\section{General context}
\subsection{Central and non-central limit central theorems}
 Consider  a centered stationary Gaussian process $Z=\left(Z_k\right)_{k\in \mathbb{Z}}$  with covariance
 \[r_Z(k):=E(Z_0Z_{k})\mbox{ and } |r_Z(k)|\leq1  \mbox{ for every } k\in\mathbb{Z}.\]
 Let us define the following normalized process
\begin{eqnarray}\label{expression of Y_i}
Y_k:=\frac{Z_k}{\sqrt{r_Z(0)}}, \quad k\in\mathbb{Z}.
\end{eqnarray}
We also define
\begin{eqnarray}\label{expression of V_{q,n}}
V_{q,n}(Z)&=&\sum_{k=1}^{q/2} I_{2k}(f_{2k,n})
\end{eqnarray}
where
\begin{eqnarray*}
f_{2k,n}&:=&d_{q,2k}(Z) \frac1{\sqrt{n}}\sum_{i=0}^{n-1}\varepsilon_i^{\otimes2k}
\end{eqnarray*}
with $Y_i=Y(\varepsilon_i)$ and $d_{q,2k}(Z)$ are constants.\\
If $\sum_{k\in \mathbb{Z}}\left|r_Y(k)\right|^2<\infty$,  we define
\[v_q(Z):= \lim_{n\rightarrow\infty}E\left[V_{q,n}^2(Z)\right].\]
We have the following central and non-central limit theorems.
\begin{theorem}\label{C-NC-LT}
Let  $\left(V_{2,n}(Z)\right)_{n\geq0}$  be the sequence defined  in
(\ref{expression of V_{q,n}}), we let $Y_k=\frac{Z_k}{\sqrt{r_Z(0)}},\ k\in\mathbb{Z}$ with covariance $r_Y(k)=E(Y_0Y_{k})$. Denote $N\sim\mathcal{N}(0,1)$.\\
1)  Then there exists a constant $C>0$ depending on $q$ and $r_Z(0)$ such that,
\begin{eqnarray}
 d_{TV}\left(\frac{V_{q,n}(Z)}{\sqrt{E\left[V_{q,n}^2(Z)\right]}},N\right)&\leq&  \frac{C}{E\left[V_{2,n}^2(Z)\right]}\sqrt{E\left[V_{2,n}^2(Y)\right] \sqrt{\kappa_{4}(V_{2,n}(Y))}+\kappa_{4}(V_{2,n}(Y)) }.\label{d_{TV}(F_{q,n}(Z),N)}
\end{eqnarray}
We also have in the case when, $\sum_{k\in \mathbb{Z}}\left|r_Y(k)\right|^2<\infty$,
\begin{eqnarray}
 &&d_{TV}\left(\frac{V_{q,n}(Z)}{\sqrt{v_q(Z)}},N\right)\nonumber\\&&\leq  \frac{C}{v_q(Z)}\left[\sqrt{E\left[V_{2,n}^2(Y)\right] \sqrt{\kappa_{4}(V_{2,n}(Y))}+\kappa_{4}(V_{2,n}(Y)) }+\left|v_q(Z)-E\left[V_{2,n}^2(Y)\right]\right|\right].\label{d_{TV}(F_{q,n}(Z),N)finite}
\end{eqnarray}
In addition:
\begin{itemize}
\item If $\sum_{k\in\mathbb{Z}}\left(r_Y(k)\right)^{2}<\infty$,
\begin{eqnarray}
 \kappa_{4}(V_{2,n}(Y))&\leq&  Cn^{-1}\left(\sum_{|k|<n}\left(r_Y(k)\right)^{4/3}\right)^{3}.\label{kappa_{4}(V_{2,n}(Y))}
\end{eqnarray}
\item If $r_Y(k)=k^{-\frac12}$,
\begin{eqnarray}
 \frac{\kappa_{4}(V_{2,n}(Y))}{\left(E\left[V_{2,n}^2(Y)\right]\right)^2}&\leq&  \frac{C}{\log(n)},\label{kappa_{4}(V_{2,n}(Y)) with log}
\end{eqnarray}
where $E\left[V_{2,n}^2(Z)\right]\sim 4d^2_{q,2}(Z)\log(n)$.
\end{itemize}
 2) Assume that $r_Y(k)=|k|^{-\alpha}$ with $0<\alpha<\frac12$. Then
\begin{eqnarray}\label{NCLT of Q_{q,n}(Z)}\frac{V_{q,n}(Z)}{n^{\frac12-\alpha}}\overset{law}{\longrightarrow}\frac{d_{q,2}(Z)}{\sqrt{D}}F_{\infty}
\end{eqnarray} where $D=\int_{-\infty}^{\infty}e^{iy}|y|^{\alpha-1}dy=2\Gamma(\alpha)\cos(\frac{\alpha\pi}{2})$, and
\[F_{\infty}=\int\int_{\R^2}e^{i(x+y)}\frac{e^{i(x+y)}-1}{i(x+y)}|xy|^{\frac{\alpha-1}{2}}W(dx)W(dy)\]
with $W$ is a random spectral measure of the white noise on $\R$.
\end{theorem}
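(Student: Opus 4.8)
The plan is to treat $V_{q,n}(Z)=\sum_{k=1}^{q/2}I_{2k}(f_{2k,n})$ as a finite sum of multiple Wiener--It\^o integrals whose kernels $f_{2k,n}=d_{q,2k}(Z)\,n^{-1/2}\sum_{i=0}^{n-1}\varepsilon_i^{\otimes 2k}$ are all built from the same elementary blocks $\varepsilon_i$, so that $I_{2k}(\varepsilon_i^{\otimes 2k})=H_{2k}(Y_i)$ and $V_{q,n}(Z)=n^{-1/2}\sum_{i=0}^{n-1}\phi(Y_i)$ for the even polynomial $\phi=\sum_k d_{q,2k}(Z)H_{2k}$. The quantity governing every estimate is the covariance $r_Y(i-j)=\langle\varepsilon_i,\varepsilon_j\rangle_{\HH}$, and the central structural observation is that, because all chaoses share this covariance, the second chaos $V_{2,n}(Y)=n^{-1/2}\sum_i H_2(Y_i)$ controls the whole family: in the summable regime it carries the dominant variance and in the long-range regime it carries the dominant order. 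I would prove part~1 by the Stein--Malliavin methodology and part~2 by the spectral (Dobrushin--Major--Taqqu) method.

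For the total variation bounds of part~1 I would start from the general inequality $d_{TV}(F/\sigma,N)\le (C/\sigma^2)\sqrt{\mathrm{Var}(\langle DF,-DL^{-1}F\rangle_{\HH})}$, valid for any centered $F$ lying in a fixed sum of chaoses with $\sigma^2=E[F^2]$. Expanding $\langle DF,-DL^{-1}F\rangle_{\HH}$ produces a linear combination of contraction norms $\|f_{2k,n}\otimes_r f_{2l,n}\|$. The heart of the argument is to dominate every one of these norms, across all chaos orders $2k,2l\le q$ and all contraction indices $r$, by the two second-chaos quantities $\kappa_4(V_{2,n}(Y))$ and $E[V_{2,n}^2(Y)]$; this works precisely because each such norm unfolds into a sum of products of $|r_Y|$ over the lattice, and the shared-kernel structure lets one factor out a full cyclic covariance block (which is $\kappa_4(V_{2,n}(Y))$ up to a constant) from a residual block controlled by $E[V_{2,n}^2(Y)]$. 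Collecting the pieces places the mixed expression $E[V_{2,n}^2(Y)]\sqrt{\kappa_4(V_{2,n}(Y))}+\kappa_4(V_{2,n}(Y))$ under the square root, and since orthogonality of chaoses gives $E[V_{q,n}^2(Z)]\ge E[V_{2,n}^2(Z)]$ one may enlarge the prefactor $1/E[V_{q,n}^2(Z)]$ to $1/E[V_{2,n}^2(Z)]$, yielding (\ref{d_{TV}(F_{q,n}(Z),N)}). The variant (\ref{d_{TV}(F_{q,n}(Z),N)finite}) follows by replacing the normalization $E[V_{q,n}^2(Z)]$ with its limit $v_q(Z)$ and absorbing the error through the triangle inequality for $d_{TV}$, which produces the additive correction $|v_q(Z)-E[V_{2,n}^2(Y)]|$.

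The cumulant estimates are then a self-contained computation. Using $\kappa_4(I_2(f))=c\,\|f\otimes_1 f\|_{\HH^{\otimes2}}^2$ with $f=n^{-1/2}\sum_i\varepsilon_i^{\otimes2}$, the fourth cumulant of $V_{2,n}(Y)$ becomes $c\,n^{-2}\sum_{i,j,k,l} r_Y(i-j)r_Y(j-k)r_Y(k-l)r_Y(l-i)$, a cyclic four-fold lattice sum. To obtain (\ref{kappa_{4}(V_{2,n}(Y))}) I would view the inner triple sum as a fourfold convolution evaluated at the origin and apply Young's convolution inequality with all exponents equal to $4/3$ (so that the four reciprocal exponents sum to $3$), which bounds the cyclic product by $n\big(\sum_{|k|<n}|r_Y(k)|^{4/3}\big)^3$ and gives the factor $n^{-1}$ after normalization. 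Specializing to $r_Y(k)=k^{-1/2}$, where $\sum_{|k|<n}r_Y(k)^2\sim2\log n$ forces $E[V_{2,n}^2(Z)]\sim 4d_{q,2}^2(Z)\log n$, a direct evaluation of the same cyclic sum against $(E[V_{2,n}^2(Y)])^2$ yields the logarithmic rate (\ref{kappa_{4}(V_{2,n}(Y)) with log}).

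For the non-central limit theorem of part~2 ($r_Y(k)=|k|^{-\alpha}$, $0<\alpha<\frac12$) I would split $V_{q,n}(Z)=d_{q,2}(Z)n^{-1/2}\sum_i H_2(Y_i)+\sum_{k\ge2}d_{q,2k}(Z)n^{-1/2}\sum_i H_{2k}(Y_i)$. A variance computation shows that, after division by $n^{1/2-\alpha}$, every term with $k\ge2$ is $O_P(n^{(1-k)\alpha})\to0$ (whether that chaos sits in its own central or non-central regime), so only the second chaos survives and the relevant scaling matches $\mathrm{Var}\big(n^{-(1-\alpha)}\sum_i H_2(Y_i)\big)\to 2\int_0^1\!\int_0^1|u-v|^{-2\alpha}\,du\,dv$. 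It then remains to prove $n^{-(1-\alpha)}\sum_{i=0}^{n-1}H_2(Y_i)\to D^{-1/2}F_\infty$ in law. Here I would pass to the spectral representation $Y_k=\int e^{ik\lambda}\sqrt{f(\lambda)}\,\hat W(d\lambda)$, where the Tauberian relation forces $f$ to behave near the origin like a constant multiple of $|\lambda|^{\alpha-1}$, the proportionality being governed by $D=2\Gamma(\alpha)\cos(\frac{\alpha\pi}{2})$; write $H_2(Y_k)$ as the double integral of $e^{ik(\lambda+\mu)}\sqrt{f(\lambda)f(\mu)}$, sum the geometric series into a Dirichlet kernel, and rescale $\lambda=x/n,\ \mu=y/n$. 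Under this rescaling the Dirichlet factor converges to $\frac{e^{i(x+y)}-1}{i(x+y)}$ and $\sqrt{f(\lambda)f(\mu)}$ to the power $|xy|^{(\alpha-1)/2}$, while the self-similarity of the white noise supplies the remaining power of $n$, so the kernels converge in $L^2(\R^2)$ to the one defining $F_\infty$ and convergence in law of the double integrals follows. I expect this last step to be the main obstacle: one must justify the $L^2(\R^2)$ convergence of the rescaled kernels, including the behaviour of the peaked Dirichlet factor near $x+y=0$ and the integrability of the limit kernel at the coordinate axes and at infinity (both guaranteed by $0<\alpha<\frac12$), and one must track the spectral-measure normalization carefully enough to pin the exact constant $D^{-1/2}$ in (\ref{NCLT of Q_{q,n}(Z)}).
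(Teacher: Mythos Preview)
Your proposal is correct and follows essentially the same route as the paper. For part~1 you invoke the Stein--Malliavin bound $d_{TV}(F/\sigma,N)\le (C/\sigma^{2})\sqrt{\mathrm{Var}\langle DF,-DL^{-1}F\rangle_{\HH}}$, expand into contraction norms, and use $|r_Y|\le 1$ to reduce every $\|f_{2k,n}\otimes_s f_{2l,n}\|^2$ to the cyclic four-index sum that equals $\kappa_4(V_{2,n}(Y))$; the paper does exactly this (citing \cite{NP2013} and \cite{NPP} for the two Malliavin lemmas), and then passes from $E[V_{q,n}^2(Z)]$ to $E[V_{2,n}^2(Z)]$ in the denominator by the same orthogonality argument you give. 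The only cosmetic difference is that the paper bounds $E|1-\langle DF,-DL^{-1}F\rangle|$ by splitting diagonal and off-diagonal blocks separately rather than going through the variance, which is why the cross term $E[V_{2,n}^2(Y)]\sqrt{\kappa_4}$ appears alongside $\kappa_4$. For the cumulant estimates and for part~2 the paper simply cites \cite{BBNP}, \cite{BN}, and \cite{DM}, whereas you sketch those arguments explicitly (Young's inequality with exponent $4/3$ for~(\ref{kappa_{4}(V_{2,n}(Y))}), direct evaluation of the cyclic sum for~(\ref{kappa_{4}(V_{2,n}(Y)) with log}), and the spectral Dobrushin--Major construction for~(\ref{NCLT of Q_{q,n}(Z)})); your sketches are accurate and reproduce what is in those references.
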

\begin{proof}
Since $\frac{V_{q,n}(Z)}{\sqrt{E\left[V_{q,n}^2(Z)\right]}}\in\mathbb{D}^{1,2}$, by \cite[Proposition 2.4]{NP2013} we have
\begin{eqnarray*}
d_{TV}\left(\frac{V_{q,n}(Z)}{\sqrt{E\left[V_{q,n}^2(Z)\right]}},N\right)\leq 2 E\left|1-\left\langle D\frac{V_{q,n}(Z)}{\sqrt{E\left[V_{q,n}^2(Z)\right]}}, -DL^{-1}\frac{V_{q,n}(Z)}{\sqrt{E\left[V_{q,n}^2(Z)\right]}}\right\rangle_{\cal{H}}\right|.
\end{eqnarray*}
On the other hand, exploiting the fact that \[E\left[E\left[\left(I_{2k}(f_{2k,n})\right)^2\right]-\langle DI_{2k}(f_{2k,n}), -DL^{-1}I_{2k}(f_{2k,n})\rangle_{\cal{H}}\right]=0\] we obtain
\begin{eqnarray*}
&& E\left|1-\langle D\frac{V_{q,n}(Z)}{\sqrt{E\left[V_{q,n}^2(Z)\right]}}, -DL^{-1}\frac{V_{q,n}(Z)}{\sqrt{E\left[V_{q,n}^2(Z)\right]}}\rangle_{\cal{H}}\right|\\&& \leq \frac{1}{E\left[V_{q,n}^2(Z)\right]}\left(\sum_{k=1}^{q/2}\sqrt{Var\left((2k)^{-1}\|DI_{2k}(f_{2k,n})\|^2_{\cal{\mathcal{H}}}\right)} +\sum_{1\leq k<l\leq q/2}(2l)^{-1}E\left|\langle DI_{2k}(f_{2k,n}), DI_{2l}(f_{2l,n})\rangle_{\cal{H}}\right|\right.
 \\
 &&\left.+\sum_{1\leq l<k\leq q/2}(2l)^{-1}E\left|\langle DI_{2k}(f_{2k,n}), DI_{2l}(f_{2l,n})\rangle_{\cal{H}}\right|\right).
\end{eqnarray*}
Moreover, by \cite[Lemma 3.1]{NPP} we have
\begin{eqnarray*}
 \sqrt{Var\left((2k)^{-1}\|DI_{2k}(f_{2k,n})\|^2_{\cal{\mathcal{H}}}\right)}&=& (2k)^{-2}\sum_{j=1}^{2k-1}j^2j!^2\left(_j^{2k}\right)^4(4k-2j)!
 \|f_{2k,n}\underset{j}{\tilde{\otimes}}f_{2k,n}\|^2_{{\cal{\mathcal{H}}}^{\otimes 4k-2j}},
\end{eqnarray*}
and for $k<l$
\begin{eqnarray*}
&&E\left[\left((2l)^{-1}\langle DI_{2k}(f_{2k,n}), DI_{2l}(f_{2l,n})\rangle_{\cal{H}}\right)^2\right]\leq(2k)!\left(_{2k-1}^{2l-1}\right)^2(2l-2k)!
E\left[\left(I_{2k}(f_{2k,n})\right)^2\right] \|f_{2k,n}\underset{2l-2k}{{\otimes}}f_{2k,n}\|^2_{{\cal{\mathcal{H}}}^{\otimes 4k}}\\&&+
2k^{2}\sum_{j=1}^{2k-1}(l-1)!^2\left(_{j-1}^{2k-1}\right)^2\left(_{j-1}^{2l-1}\right)^2(2k+2l-2j)!
 \left(\|f_{2k,n}\underset{2k-j}{{\otimes}}f_{2k,n}\|^2_{{\cal{\mathcal{H}}}^{\otimes 2j}}+\|f_{2k,n}\underset{2l-j}{{\otimes}}f_{2k,n}\|^2_{{\cal{\mathcal{H}}}^{\otimes 2j}}\right).
 \end{eqnarray*}
Combining this together with the fact that for every $1\leq s \leq 2k-1$ with $k\in \{1,\ldots,q/2\}$
\begin{eqnarray*}
\|f_{2k,n}\underset{s}{\otimes}f_{2k,n}\|^2_{{\cal{\mathcal{H}}}^{\otimes 4k-2s}} &\leq& d_{q,2k}^4(Z) n^{-2}\sum_{k_1,k_2,k_3,k_4=1}^nr_Y^{s}(k_1-k_2)
r_Y^{s}(k_3-k_4)r_Y^{2k-s}(k_1-k_3)r_Y^{2k-s}(k_2-k_4)\\
&\leq& d_{q,2k}^4(Z) n^{-2}\sum_{k_1,k_2,k_3,k_4=1}^nr_Y(k_1-k_2)
r_Y(k_3-k_4)r_Y(k_1-k_3)r_Y(k_2-k_4)
\\&=& d_{q,2k}^4(Z) \kappa_{4}(V_{2,n}(Y))
\end{eqnarray*} we deduce that there exists a constant $C$ depending on $q$ and $r_Z(0)$ such that
\begin{eqnarray*}
 \sqrt{Var\left((2k)^{-1}\|DI_{2k}(f_{2k,n})\|^2_{\cal{\mathcal{H}}}\right)}&\leq& C \sqrt{\kappa_{4}(V_{2,n}(Y))}
\end{eqnarray*}
and
\begin{eqnarray*}
 (2l)^{-1}E\left|\langle DI_{2k}(f_{2k,n}), DI_{2l}(f_{2l,n})\rangle_{\cal{H}}\right|&\leq&
 \left(E\left[\left((2l)^{-1}\langle DI_{2k}(f_{2k,n}), DI_{2l}(f_{2l,n})\rangle_{\cal{H}}\right)^2\right]\right)^{1/2}
\\&\leq& C\sqrt{ E\left[\left(I_{2k}(f_{2k,n})\right)^2\right] \sqrt{\kappa_{4}(V_{2,n}(Y))}+\kappa_{4}(V_{2,n}(Y)) }
\\&\leq& C\sqrt{ n^{-1}\sum_{i,j=0}^{n-1}r_Y^{2k}(i-j) \sqrt{\kappa_{4}(V_{2,n}(Y))}+\kappa_{4}(V_{2,n}(Y)) }
\\&\leq& C\sqrt{ n^{-1}\sum_{i,j=0}^{n-1}r_Y^{2}(i-j) \sqrt{\kappa_{4}(V_{2,n}(Y))}+\kappa_{4}(V_{2,n}(Y)) }.
\end{eqnarray*}
Furthermore,
\begin{eqnarray*}
 d_{TV}\left(\frac{V_{q,n}(Z)}{\sqrt{E\left[V_{q,n}^2(Z)\right]}},N\right)
 &\leq& \frac{C}{E\left[V_{q,n}^2(Z)\right]}\sqrt{ n^{-1}\sum_{i,j=0}^{n-1}r_Y^{2}(i-j) \sqrt{\kappa_{4}(V_{2,n}(Y))}+\kappa_{4}(V_{2,n}(Y)) }
 \\
 &=& \frac{C}{E\left[V_{q,n}^2(Z)\right]}\sqrt{E\left[V_{2,n}^2(Y)\right] \sqrt{\kappa_{4}(V_{2,n}(Y))}+\kappa_{4}(V_{2,n}(Y)) }
 \\
 &\leq& \frac{C}{E\left[V_{2,n}^2(Z)\right]}\sqrt{ E\left[V_{2,n}^2(Y)\right] \sqrt{\kappa_{4}(V_{2,n}(Y))}+\kappa_{4}(V_{2,n}(Y)) }.
\end{eqnarray*}
Thus the estimate (\ref{d_{TV}(F_{q,n}(Z),N)}) is obtained. By a similar argument we obtain (\ref{d_{TV}(F_{q,n}(Z),N)finite}).\\
Moreover, if $\sum_{k\in\mathbb{Z}}\left(r_Y(k)\right)^{2}<\infty$, it follows from \cite{BBNP} that
\begin{eqnarray*}
 \kappa_{4}(V_{2,n}(Y))&\leq&  \frac{Cn^{-1}}{E\left[V_{2,n}^2(Y)\right]}\left(\sum_{|k|<n}\left(r_Y(k)\right)^{4/3}\right)^{3}
 \\&\leq&  Cn^{-1}\left(\sum_{|k|<n}\left(r_Y(k)\right)^{4/3}\right)^{3}.
\end{eqnarray*}
and if $r_Y(k)=k^{-\frac12}$ for $k\in\mathbb{Z}^*$, it follows from (\ref{expression of var(V_{q,n})}) that
\begin{eqnarray}
E\left[V_{q,n}^2(Z)\right]&=&d^2_{q,2}(Z) \frac{2}{n}\sum_{i,j=0}^{n-1}|r_Y(i-j)|^{2}+\sum_{k=2}^{q/2}d^2_{q,2k}(Z) \frac{(2k)!}{n}\sum_{i,j=0}^{n-1}|r_Y(i-j)|^{2k}\nonumber\\
&=&2d^2_{q,2}(Z)\left(1+ \frac{2}{n}\sum_{j=1}^{n-1}(n-j)j^{-1}\right)+\sum_{k=2}^{q/2}d^2_{q,2k}(Z) \frac{(2k)!}{n}\sum_{i,j=0}^{n-1}|r_Y(i-j)|^{2k}\nonumber\\
&=&2d^2_{q,2}(Z)\left(1+ 2\sum_{j=1}^{n-1}j^{-1}-\frac{2}{n}\sum_{j=1}^{n-1}j^{-2}\right)+\sum_{k=2}^{q/2}d^2_{q,2k}(Z) \frac{(2k)!}{n}\sum_{i,j=0}^{n-1}|r_Y(i-j)|^{2k}\nonumber\\
&\sim&4d^2_{q,2}(Z)\log(n).\label{log equivalence for var(V_{q,n})}
\end{eqnarray}
Thus, as in \cite{BN} we conclude that
\begin{eqnarray}
 \frac{\kappa_{4}(V_{2,n}(Y))}{\left(E\left[V_{2,n}^2(Y)\right]\right)^2}
 &=&\frac{\kappa_{4}(V_{2,n}(Y))}{\left(r_Z^{-1}(0)E\left[V_{2,n}^2(Z)\right]\right)^2}\nonumber\\
 &=&\frac{1}{n^2\left(r_Z^{-1}(0)E\left[V_{2,n}^2(Z)\right]\right)^2}
 \sum_{k_1,k_2,k_3,k_4=1}^nr_Y(k_1-k_2)
r_Y(k_3-k_4)r_Y(k_1-k_3)r_Y(k_2-k_4)\nonumber\\&\leq&\frac{C}{n^2\log^2(n)}  \sum_{k_1,k_2,k_3,k_4=1}^nr_Y(k_1-k_2)
r_Y(k_3-k_4)r_Y(k_1-k_3)r_Y(k_2-k_4)
 \nonumber\\&\leq&  \frac{C}{\log(n)}.\label{log equivalence for cumulant (V_{q,n})}
\end{eqnarray}
Hence the inequalities (\ref{kappa_{4}(V_{2,n}(Y))}) and (\ref{kappa_{4}(V_{2,n}(Y)) with log}) are satisfied.\\
The convergence (\ref{NCLT of Q_{q,n}(Z)}) is a direct consequence of \cite[Theorem 1]{DM}.
\end{proof}
\subsection{Hermite variation}
Let $q\in\mathbb{N}^*$ be even, and let $H_q$ be the pth Hermite polynomial. So, $H_q$ has the following decomposition \[H_q(x)=\sum_{k=0}^{\frac{q}{2}}\frac{q!(-1)^{k}}{k!(q-2k)!2^k} x^{q-2k}.\]
Define for every $q\geq2$   even,
 \begin{eqnarray}Q_{q,n}(Z):=\frac{1}{n}\sum_{k=0}^{n-1}H_q(Z_k)
\end{eqnarray}
and \[\gamma_Z(q):=E\left[H_q(Z_0)\right].\]
Then, we can write
\begin{eqnarray}
\gamma_Z(q)&:=&\sum_{k=0}^{\frac{q}{2}}\frac{q!(-1)^{k}}{k!(q-2k)!2^k}E\left(Z_0^{q-2k}\right)\nonumber
\\&=&\frac{q!}{ 2^{q/2}}\sum_{k=0}^{\frac{q}{2}}\frac{(-1)^{k}}{k!(\frac{q}{2}-k)!}\left[E\left(Z_0^2\right)\right]^{\frac{q}{2}-k}\nonumber
\\&=&\frac{q!}{(\frac{q}{2})! 2^{q/2}}\left(E\left(Z_0^2\right)-1\right)^{q/2}\label{limit a.s. of Q_{q,n}},
\end{eqnarray}
We have the following almost sure convergence.
\begin{theorem}Suppose that $Z$ is ergodic. Then, as $n\rightarrow\infty$
\begin{eqnarray}Q_{q,n}(Z)\longrightarrow\gamma_Z(q)\label{limit a.s. of Q_n{q,n}(Z)}
\end{eqnarray} almost surely.
\end{theorem}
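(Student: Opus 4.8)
The plan is to read this statement off directly from Birkhoff's pointwise ergodic theorem, applied to the stationary sequence $\left(H_q(Z_k)\right)_{k\in\mathbb{Z}}$ obtained by applying the fixed polynomial $H_q$ coordinatewise to $Z$. First I would verify the single integrability hypothesis that Birkhoff's theorem requires: since $Z_0$ is Gaussian it has finite moments of every order, and $H_q$ is a polynomial of degree $q$, so $E\left|H_q(Z_0)\right|<\infty$ and the observable $f:=H_q(Z_0)$ lies in $L^1$.

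Next I would install the ergodic-theoretic framework. Realizing $Z$ on its canonical path space, let $\theta$ denote the shift, so that $Z_k=Z_0\circ\theta^k$ and stationarity of $Z$ is exactly the statement that $\theta$ preserves the law of $Z$, while the hypothesis that $Z$ is ergodic means that the $\theta$-invariant $\sigma$-field is trivial. Because $H_q$ is a fixed measurable map applied to each coordinate, the sequence $\left(H_q(Z_k)\right)_k=\left(f\circ\theta^k\right)_k$ is a factor of the ergodic system $(Z,\theta)$ and is therefore again stationary and ergodic. With these two ingredients in place, Birkhoff's theorem yields
\[
Q_{q,n}(Z)=\frac1n\sum_{k=0}^{n-1}f\circ\theta^k\longrightarrow E[f]=E\left[H_q(Z_0)\right]=\gamma_Z(q)\qquad\text{almost surely},
\]
where the limit is the deterministic constant $\gamma_Z(q)$ precisely because ergodicity collapses the a priori limit (the conditional expectation over the invariant $\sigma$-field) to a constant; its explicit value $\gamma_Z(q)=\frac{q!}{(q/2)!\,2^{q/2}}\left(E(Z_0^2)-1\right)^{q/2}$ was already computed in (\ref{limit a.s. of Q_{q,n}}).

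There is essentially no hard step here, so I do not expect a genuine obstacle. The only two points meriting a line of justification are the $L^1$-integrability of $H_q(Z_0)$, which is immediate from Gaussianity, and the transfer of ergodicity from $(Z_k)$ to $\left(H_q(Z_k)\right)$, which is the standard fact that factors of an ergodic system are ergodic. Everything else is the classical pointwise ergodic theorem, and no Malliavin-calculus input is needed for this particular convergence.
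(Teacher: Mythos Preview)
Your proposal is correct and is exactly the argument the paper has in mind: the theorem is stated there without proof, as an immediate consequence of the pointwise ergodic theorem applied to the stationary ergodic sequence $\left(H_q(Z_k)\right)_k$, and your write-up simply makes that one-line reduction explicit together with the trivial $L^1$ check.
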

Let $Y_i$ be the process defined in  (\ref{expression of Y_i}). We have,
\begin{eqnarray*}
H_q(Z_i)-EH_q(Z_i)=\sum_{k=1}^{q/2}b_{q,2k} H_{2k}(Y_i)
\end{eqnarray*}
where for any $k\in\{1,\ldots,\frac{q}{2}-1 \}$
\begin{eqnarray*}
b_{q,q-2k}(Z)&=&(-1)^{k}\left(r_Z^{\frac{q}{2}}(0)-r_Z^{\frac{q}{2}-1}(0)\right)a_{q-2}^{q}a_{q-4}^{q-2}\dots a_{q-2k}^{q-2k+2}\\
+&&(-1)^{k-1}\left(r_Z^\frac{q}{2}(0)-r_Z^{\frac{q}{2}-2}(0)\right)a_{q-4}^{q}a_{q-6}^{q-4}\ldots a_{q-2k}^{q-2k+2}\\
+&&\ldots
\\
+&&(-1)^{1}\left(r_Z^\frac{q}{2}(0)-r_Z^{\frac{q}{2}-k}(0)\right) a_{q-2k}^{q}
\end{eqnarray*}
and $b_{q,q}(Z)=r_Z^\frac{q}{2}(0)$, with for every $p$ even
\begin{eqnarray*}
a_{p-2k}^{p}=\frac{p!(-1)^{k}}{k!(p-2k)!2^k}\quad k=0,\ldots,p/2
\end{eqnarray*} which verify
\begin{eqnarray*}
H_p(x)=\sum_{k=0}^{p/2}a_{p-2k}^{p}x^{p-2k}.
\end{eqnarray*}
Define
\begin{eqnarray*}
V_{Q_{q,n}}(Z)&:=&\sqrt{n}\left(Q_{q,n}(Z)-\gamma_Z(q)\right).
\end{eqnarray*}
As consequence, we can write
\begin{eqnarray*}
V_{Q_{q,n}}(Z)&=&\sqrt{n}\left(Q_{q,n}(Z)-\gamma_Z(q)\right)\\&=&\frac1{\sqrt{n}}\sum_{i=0}^{n-1}
\left[ H_q(Z_i)-EH_q(Z_i)\right]\\&=& \frac1{\sqrt{n}}\sum_{i=0}^{n-1}
\sum_{k=1}^{q/2}b_{q,2k}(Z) H_{2k}(Y_i)
\\&=&\sum_{k=1}^{q/2}b_{q,2k}(Z) \frac1{\sqrt{n}}\sum_{i=0}^{n-1}
 I_{2k}\left(\varepsilon_i^{\otimes2k}\right)
 \\&=&\sum_{k=1}^{q/2} I_{2k}\left(b_{q,2k}(Z) \frac1{\sqrt{n}}\sum_{i=0}^{n-1}\varepsilon_i^{\otimes2k}\right)
\end{eqnarray*}
where $Y_i=Y(\varepsilon_i)$.\\
Furthermore,
\begin{eqnarray}\label{expression of var(V_{q,n})}
E\left[V_{Q_{q,n}}^2(Z)\right]&=&\sum_{k=1}^{q/2}b^2_{q,2k}(Z) \frac{(2k)!}{n}\sum_{i,j=0}^{n-1}|r_Y(i-j)|^{2k}\nonumber\\
&=&\sum_{k=1}^{q/2}b^2_{q,2k}(Z) (2k)!\left(1+\frac{2}{n}\sum_{j=1}^{n-1}(n-1-j)|r_Y(j)|^{2k}\right)\nonumber\\
&=&\sum_{k=1}^{q/2}b^2_{q,2k}(Z) (2k)!\left(1+2\sum_{j=1}^{n-1}|r_Y(j)|^{2k}-\frac{2}{n}\sum_{j=1}^{n-1}j|r_Y(j)|^{2k}\right).
\end{eqnarray}

We will need the following technical lemma.
\begin{lemma}\label{asymptotic R_{Q,q}} Let $(Z_k)_{k\geq0}$ be a stationary Gaussian sequence with $E(Z_0^2)<\infty$, and let $\lambda>0$ and  $q\in\mathbb{N}^*$  even.
Consider the sequence
\[R_{Q,q}(\lambda,Z_k):=H_q(Z_k-e^{-\lambda k}Z_0)-H_q(Z_k).\]
Then for every  $p\geq1$ there exits a constant $c(\lambda,q)$ depending on $\lambda, q$ and $E(Z_0^2)$ such that
\begin{eqnarray}\label{rate of sum of R_{Q,q}(Z_k) in Lp}\left\|\frac{1}{{n}}\sum_{k=0}^{n-1}R_{Q,q}(\lambda,Z_k)\right\|_{L^p(\Omega)}\leq \frac{c(\lambda,q)}{{n}}.
\end{eqnarray}
Moreover for every $\varepsilon>0$
\begin{eqnarray}\label{pathwise cv of sum of R_{Q,q}(Z_k)}\frac{1}{n^{\varepsilon}}\sum_{k=0}^{n-1}R_{Q,q}(\lambda,Z_k)\longrightarrow0
\end{eqnarray} almost surely as $n\rightarrow\infty$.
\end{lemma}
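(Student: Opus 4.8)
The plan is to exploit the fact that $H_q$ is a polynomial of degree $q$, so that the increment $R_{Q,q}(\lambda,Z_k)$ admits an \emph{exact} finite Taylor expansion in which every surviving term carries an exponentially small weight $e^{-\lambda jk}$. Using the Hermite identity $H_q^{(j)}=\frac{q!}{(q-j)!}H_{q-j}$, I would write
\[
R_{Q,q}(\lambda,Z_k)=H_q\!\left(Z_k-e^{-\lambda k}Z_0\right)-H_q(Z_k)=\sum_{j=1}^{q}\frac{(-1)^{j}}{j!}\,\frac{q!}{(q-j)!}\,e^{-\lambda jk}\,Z_0^{\,j}\,H_{q-j}(Z_k).
\]
The decisive point is that the $j=0$ term cancels, so that every remaining summand is multiplied by $e^{-\lambda jk}$ with $j\geq1$; summing over $k$ then produces a convergent geometric series instead of a contribution of order $n$.

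For the $L^p$ estimate I would take $L^p(\Omega)$ norms and apply the triangle inequality, reducing matters to bounding, uniformly in $k$, the quantities $\big\|Z_0^{\,j}H_{q-j}(Z_k)\big\|_{L^p(\Omega)}$. By Cauchy--Schwarz this is at most $\|Z_0\|_{L^{2pj}(\Omega)}^{j}\,\|H_{q-j}(Z_k)\|_{L^{2p}(\Omega)}$, and since $Z$ is stationary, $Z_k$ has the same law as $Z_0$, so both factors are finite constants depending only on $p$, $q$ and $E(Z_0^2)$ and, crucially, not on $k$. Denoting such a uniform bound by $C$, I obtain
\[
\left\|\frac1n\sum_{k=0}^{n-1}R_{Q,q}(\lambda,Z_k)\right\|_{L^p(\Omega)}\leq \frac{C}{n}\sum_{j=1}^{q}\frac{q!}{j!(q-j)!}\sum_{k=0}^{n-1}e^{-\lambda jk}\leq\frac{C}{n}\sum_{j=1}^{q}\frac{q!}{j!(q-j)!}\,\frac{1}{1-e^{-\lambda j}},
\]
where the last inequality uses $\lambda j>0$ to sum the geometric series. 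This is exactly the rate $c(\lambda,q)/n$ asserted in (\ref{rate of sum of R_{Q,q}(Z_k) in Lp}), the constant absorbing the (fixed) dependence on $p$.

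The almost sure statement (\ref{pathwise cv of sum of R_{Q,q}(Z_k)}) then follows from Lemma \ref{Borel-Cantelli}. Setting $W_n:=\frac1n\sum_{k=0}^{n-1}R_{Q,q}(\lambda,Z_k)$, the previous step gives $(E|W_n|^p)^{1/p}\leq c_p\,n^{-1}$ for every $p\geq1$, which is the hypothesis of Lemma \ref{Borel-Cantelli} with $\gamma=1$. Hence for each $\varepsilon'>0$ there is an almost surely finite random variable $\eta_{\varepsilon'}$ with $|W_n|\leq \eta_{\varepsilon'}\,n^{-1+\varepsilon'}$ for all $n$. Since $\frac{1}{n^{\varepsilon}}\sum_{k=0}^{n-1}R_{Q,q}(\lambda,Z_k)=n^{1-\varepsilon}W_n$, choosing $\varepsilon'=\varepsilon/2$ yields $\big|n^{1-\varepsilon}W_n\big|\leq \eta_{\varepsilon'}\,n^{\varepsilon'-\varepsilon}=\eta_{\varepsilon'}\,n^{-\varepsilon/2}\to0$ almost surely.

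I expect no serious obstacle: the argument is fundamentally a polynomial increment bound combined with exponential summation. The only point demanding care is the uniform-in-$k$ control of the mixed moments $\|Z_0^{\,j}H_{q-j}(Z_k)\|_{L^p(\Omega)}$; here stationarity ensures that each $Z_k$ carries the same Gaussian marginal as $Z_0$, so the bound depends only on the variance $E(Z_0^2)$ and not on the strength of the correlation between $Z_0$ and $Z_k$, which is precisely what renders the constant $c(\lambda,q)$ independent of $n$.
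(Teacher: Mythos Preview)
Your proposal is correct and follows essentially the same route as the paper: expand the polynomial increment so that every surviving term carries a factor $e^{-\lambda jk}$ with $j\geq1$, bound the mixed moments uniformly in $k$ via stationarity and Gaussianity, sum the resulting geometric series to obtain the $1/n$ rate, and then invoke Lemma~\ref{Borel-Cantelli} for the almost sure statement. The only cosmetic difference is that the paper expands $H_q$ into monomials via $H_q(x)=\sum_{i=0}^{q/2}\frac{q!(-1)^{i}}{i!(q-2i)!2^i}x^{q-2i}$ and then applies the binomial theorem to each $(Z_k-e^{-\lambda k}Z_0)^{q-2i}$, whereas you use the Hermite derivative identity $H_q^{(j)}=\tfrac{q!}{(q-j)!}H_{q-j}$ to obtain the expansion in one step; the latter is slightly cleaner but the underlying mechanism is identical.
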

\begin{proof} We have
\begin{eqnarray*}R_{Q,q}(\lambda,Z_k)=\sum_{i=0}^{\frac{q}{2}}\frac{q!(-1)^{i}}{k!(q-2i)!2^i} \sum_{j=1}^{q-2i}(-1)^{j}(_j^{q-2i})e^{-\lambda jk}Z_0^jZ_k^{q-2i-j}.
\end{eqnarray*}Combining this with the fact that $Z$ is stationary and Gaussian, we obtain
\begin{eqnarray*}\left\|R_{Q,q}(\lambda,Z_k)\right\|_{L^p(\Omega)}\leq c_0(\lambda, q)e^{-\lambda k}
\end{eqnarray*}
Thus  (\ref{rate of sum of R_{Q,q}(Z_k) in Lp}) is obtained.\\
For the convergence (\ref{pathwise cv of sum of R_{Q,q}(Z_k)}), it is a direct consequence of  (\ref{rate of sum of R_{Q,q}(Z_k) in Lp}) and Lemma \ref{Borel-Cantelli}.
\end{proof}

Applying Theorem \ref{C-NC-LT} and Lemma \ref{asymptotic R_{Q,q}} we conclude the following result.
\begin{theorem}\label{CLT and NCLT of Q_{q,n}(Z-exp)}
Let  $\left(V_{Q_{q,n}}(Z)\right)_{n\geq0}$ and $\left(R_{Q,q}(\lambda,Z_k)\right)_{n\geq0}$ be the sequences defined  in
the above. \\
1) Then there exists $C$ depending on $\lambda$, $q$ and $r_Z(0)$ such that
\begin{eqnarray}
 &&d_{W}\left(\sqrt{\frac{n}{E\left[V_{Q_{q,n}}^2(Z)\right]}}\left(Q_{q,n}\left(Z_k-e^{-\lambda k}Z_0\right)-\gamma_{Z}(q)\right),N\right)\nonumber\\&\leq&  \frac{C}{E\left[V_{Q_{q,n}}^2(Z)\right]}\left(\sqrt{\frac{E\left[V_{Q_{q,n}}^2(Y)\right]}{n} }+\sqrt{E\left[V_{Q_{2,n}}^2(Y)\right] \sqrt{\kappa_{4}(V_{Q_{2,n}}(Y))}+\kappa_{4}(V_{Q_{2,n}}(Y)) }\right).\label{d_{W}(Q_{q,n}(Z-exp),N)}
\end{eqnarray}
On the other hand  if $\sum_{k\in \mathbb{Z}}\left|r_Y(k)\right|^2<\infty$, we can write
\begin{eqnarray}
 &&d_{W}\left(\sqrt{\frac{n}{v_{Q_q}(Z)}}\left(Q_{q,n}\left(Z_k-e^{-\lambda k}Z_0\right)-\gamma_{Z}(q)\right),N\right)\label{d_{W}(Q_{q,n}(Z-exp),N) finite}\\&\leq&  \frac{C}{v_{Q_q}(Z)}\left(\sqrt{\frac{v_{Q_q}(Z)}{n} }+\sqrt{E\left[V_{Q_{2,n}}^2(Y)\right] \sqrt{\kappa_{4}(V_{Q_{2,n}}(Y))}+\kappa_{4}(V_{Q_{2,n}}(Y)) } +\left|v_{Q_q}(Z)-E\left[V_{Q_{q,n}}^2(Y)\right]\right|\right)\nonumber
\end{eqnarray} where $v_{Q_q}(Z):= \lim_{n\rightarrow\infty}E\left[V_{Q_{q,n}}^2(Z)\right].$
Moreover,  $\kappa_{4}(V_{Q_{2,n}}(Y))$ verifies (\ref{kappa_{4}(V_{2,n}(Y))}) and (\ref{kappa_{4}(V_{2,n}(Y)) with log}).\\
 2) Assume that $r_Y(k)=|k|^{-\alpha}$ with $0<\alpha<\frac12$. Then
\begin{eqnarray}\label{NCLT of Q_{q,n}(Z-exp)}\frac{n^{\alpha}}{\sqrt{v_{Q_q}(Z)}}\left(Q_{q,n}\left(Z_k-e^{-\lambda k}Z_0\right)-\gamma_{Z}(q)\right)\overset{law}{\longrightarrow}\frac{b_{q,2}}{\sqrt{D}}F_{\infty}
\end{eqnarray} where $D$ and
$F_{\infty}$ are defined in Theorem \ref{C-NC-LT}.
\end{theorem}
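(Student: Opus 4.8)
The plan is to reduce the statement to Theorem~\ref{C-NC-LT} and Lemma~\ref{asymptotic R_{Q,q}} through an exact algebraic decomposition followed by a perturbation (Slutsky-type) argument. Since $R_{Q,q}(\lambda,Z_k)=H_q(Z_k-e^{-\lambda k}Z_0)-H_q(Z_k)$, averaging over $k$ and multiplying by $\sqrt n$ gives the identity
\[
\sqrt n\left(Q_{q,n}(Z_k-e^{-\lambda k}Z_0)-\gamma_Z(q)\right)=V_{Q_{q,n}}(Z)+\frac{1}{\sqrt n}\sum_{k=0}^{n-1}R_{Q,q}(\lambda,Z_k).
\]
Dividing by $\sqrt{E[V_{Q_{q,n}}^2(Z)]}$, the normalised statistic in \eqref{d_{W}(Q_{q,n}(Z-exp),N)} equals $G_n+\rho_n$, where $G_n:=V_{Q_{q,n}}(Z)/\sqrt{E[V_{Q_{q,n}}^2(Z)]}$ and $\rho_n:=(nE[V_{Q_{q,n}}^2(Z)])^{-1/2}\sum_{k=0}^{n-1}R_{Q,q}(\lambda,Z_k)$. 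I would then use the triangle inequality together with the elementary fact that every $f\in Lip(1)$ obeys $|Ef(G_n+\rho_n)-Ef(G_n)|\le E|\rho_n|$, obtaining
\[
d_{W}\left(G_n+\rho_n,N\right)\le d_{W}(G_n,N)+E|\rho_n|,
\]
so it suffices to bound the two summands separately.

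For $d_W(G_n,N)$, note that $V_{Q_{q,n}}(Z)=\sum_{k=1}^{q/2}I_{2k}(b_{q,2k}(Z)\,n^{-1/2}\sum_i\varepsilon_i^{\otimes 2k})$ is exactly of the form \eqref{expression of V_{q,n}} with $d_{q,2k}=b_{q,2k}$, so I would rerun the Stein--Malliavin computation in the proof of Theorem~\ref{C-NC-LT}. The bound of \cite[Prop.~2.4]{NP2013} controls $d_W$ by the same quantity $E|1-\langle DG_n,-DL^{-1}G_n\rangle_{\mathcal H}|$ that controls $d_{TV}$, so keeping $E[V_{Q_{q,n}}^2(Z)]$ in the denominator (that is, stopping one line before the final weakening $E[V_{q,n}^2(Z)]\to E[V_{2,n}^2(Z)]$) yields precisely the second summand of \eqref{d_{W}(Q_{q,n}(Z-exp),N)}. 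For $\rho_n$, Lemma~\ref{asymptotic R_{Q,q}} and \eqref{rate of sum of R_{Q,q}(Z_k) in Lp} give $E|n^{-1}\sum_k R_{Q,q}(\lambda,Z_k)|\le c(\lambda,q)/n$, hence $E|\sum_k R_{Q,q}(\lambda,Z_k)|\le c(\lambda,q)$ and $E|\rho_n|\le c(\lambda,q)(nE[V_{Q_{q,n}}^2(Z)])^{-1/2}$; using \eqref{expression of var(V_{q,n})} — in which the $n$-dependent factors are identical for $Z$ and $Y$ and only the fixed constants $b^2_{q,2k}$ differ — to compare $E[V_{Q_{q,n}}^2(Z)]$ with $E[V_{Q_{q,n}}^2(Y)]$, this is at most $C(E[V_{Q_{q,n}}^2(Z)])^{-1}\sqrt{E[V_{Q_{q,n}}^2(Y)]/n}$, the first summand. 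Adding the two bounds proves part~1, and the finite-variance version \eqref{d_{W}(Q_{q,n}(Z-exp),N) finite} follows by replacing $E[V_{Q_{q,n}}^2(Z)]$ with its limit $v_{Q_q}(Z)$ and adding the discrepancy $|v_{Q_q}(Z)-E[V_{Q_{q,n}}^2(Y)]|$, exactly as \eqref{d_{TV}(F_{q,n}(Z),N)finite} is obtained from \eqref{d_{TV}(F_{q,n}(Z),N)}.

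For part~2 I would apply the same decomposition, now scaled as in \eqref{NCLT of Q_{q,n}(Z-exp)}. The main term is a fixed multiple of $V_{Q_{q,n}}(Z)/n^{1/2-\alpha}$, which converges in law to $\tfrac{b_{q,2}}{\sqrt D}F_\infty$ by Theorem~\ref{C-NC-LT} part~2, while the perturbation contributes $n^{\alpha-1}\sum_{k=0}^{n-1}R_{Q,q}(\lambda,Z_k)$, which tends to $0$ by \eqref{rate of sum of R_{Q,q}(Z_k) in Lp} (with $p=1$) since $\alpha<1$. Slutsky's theorem then yields \eqref{NCLT of Q_{q,n}(Z-exp)}.

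The only genuinely delicate point is confirming that the boundary perturbation $e^{-\lambda k}Z_0$ is asymptotically negligible relative to the CLT/NCLT fluctuations. This is exactly what the uniform exponential estimate $\|R_{Q,q}(\lambda,Z_k)\|_{L^p(\Omega)}\le c_0(\lambda,q)e^{-\lambda k}$ underlying Lemma~\ref{asymptotic R_{Q,q}} provides: it makes $\sum_{k}R_{Q,q}(\lambda,Z_k)$ bounded in every $L^p$ uniformly in $n$, hence of strictly lower order after dividing by $\sqrt n$ (respectively after multiplying by $n^{\alpha-1}$). Everything else is a transcription of the estimates already established for Theorem~\ref{C-NC-LT}, so I expect no further obstruction.
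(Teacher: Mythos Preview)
Your proposal is correct and follows essentially the same route as the paper: the same additive decomposition \eqref{decomposition of Q_{q,n}(Z-exp)} into $V_{Q_{q,n}}(Z)/\sqrt{E[V_{Q_{q,n}}^2(Z)]}$ plus the remainder $(nE[V_{Q_{q,n}}^2(Z)])^{-1/2}\sum_k R_{Q,q}(\lambda,Z_k)$, the same triangle inequality for $d_W$, and then the same appeal to Theorem~\ref{C-NC-LT} for the main term and to Lemma~\ref{asymptotic R_{Q,q}} (via \eqref{rate of sum of R_{Q,q}(Z_k) in Lp}) for the remainder; part~2 is likewise obtained in the paper from the decomposition, \eqref{NCLT of Q_{q,n}(Z)} and \eqref{rate of sum of R_{Q,q}(Z_k) in Lp}. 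Your extra remarks --- that the Stein--Malliavin bound of \cite[Prop.~2.4]{NP2013} covers $d_W$ as well as $d_{TV}$, and that \eqref{expression of var(V_{q,n})} lets you swap $E[V_{Q_{q,n}}^2(Z)]$ for $E[V_{Q_{q,n}}^2(Y)]$ up to constants --- are legitimate clarifications of steps the paper leaves implicit.
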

\begin{proof}We have
\begin{eqnarray}
&&\sqrt{\frac{n}{E\left[V_{Q_{q,n}}^2(Z)\right]}}\left(Q_{q,n}\left(Z_k-e^{-\lambda k}Z_0\right)-\gamma_{Z}(q)\right)\nonumber\\&=&
\sqrt{\frac{n}{E\left[V_{Q_{q,n}}^2(Z)\right]}}\left(Q_{q,n}(Z)-\gamma_{Z}(q)\right)
+\frac{1}{\sqrt{nE\left[V_{Q_{q,n}}^2(Z)\right]}}\sum_{k=0}^{n-1}R_{Q,q}(\lambda,Z_k)
\nonumber\\&=&\frac{V_{Q_{q,n}}(Z)}{\sqrt{E\left[V_{Q_{q,n}}^2(Z)\right]}}
+\frac{1}{\sqrt{nE\left[V_{Q_{q,n}}^2(Z)\right]}}\sum_{k=0}^{n-1}R_{Q,q}(\lambda,Z_k).\label{decomposition of Q_{q,n}(Z-exp)}\end{eqnarray}
Hence,
\begin{eqnarray*}
&&d_{W}\left(\sqrt{\frac{n}{E\left[V_{Q_{q,n}}^2(Z)\right]}}\left(Q_{q,n}\left(Z_k-e^{-\lambda k}Z_0\right)-\gamma_{Z}(q)\right),N\right)\\&\leq&
 d_{W}\left(\frac{V_{Q_{q,n}}(Z)}{\sqrt{E\left[V_{Q_{q,n}}^2(Z)\right]}},N\right)
 +\left\|\frac{1}{\sqrt{nE\left[V_{Q_{q,n}}^2(Z)\right]}}\sum_{k=0}^{n-1}R_{Q,q}(\lambda,Z_k)\right\|_{L^1(\Omega)}.
\end{eqnarray*}
Combining this with (\ref{d_{TV}(F_{q,n}(Z),N)}) and (\ref{rate of sum of R_{Q,q}(Z_k) in Lp}) we obtain (\ref{d_{W}(Q_{q,n}(Z-exp),N)}). Similar argument leads to (\ref{d_{W}(Q_{q,n}(Z-exp),N) finite}).\\
Moreover, from (\ref{decomposition of Q_{q,n}(Z-exp)}), (\ref{NCLT of Q_{q,n}(Z)}) and (\ref{rate of sum of R_{Q,q}(Z_k) in Lp}) we deduce (\ref{NCLT of Q_{q,n}(Z-exp)}).
\end{proof}
\subsection{Power variation}
Consider, for every $q\in\mathbb{N}^*$  even, the following power variation
 \begin{eqnarray}P_{q,n}(Z):=\frac{1}{n}\sum_{i=0}^{n-1}(Z_i)^q.
\end{eqnarray}
Define
\begin{eqnarray}\delta_Z(q):=E\left[(Z_0)^q\right]=\frac{q!}{(\frac{q}{2})! 2^{q/2}}\left[E\left(Z_0^2\right)\right]^{q/2}\label{def. delta_Z(q)}.
\end{eqnarray}
We have the following almost sure convergence.
\begin{theorem}Suppose that $Z$ is ergodic. Then, as $n\rightarrow\infty$
\begin{eqnarray}P_{q,n}(Z)\longrightarrow\delta_Z(q)\label{limit a.s. of P{q,n}(Z)}
\end{eqnarray} almost surely.
\end{theorem}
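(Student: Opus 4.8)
The plan is to recognize the statement as a direct instance of Birkhoff's pointwise ergodic theorem. Since $Z=(Z_k)_{k\in\mathbb{Z}}$ is by assumption a centered \emph{stationary} and \emph{ergodic} Gaussian process, I would represent it on the canonical sequence space equipped with the shift operator $T$, so that $Z_k=Z_0\circ T^k$ and $T$ is a measure-preserving, ergodic transformation for the law of $Z$. The power variation is then exactly the Ces\`aro average of the observable $g(Z_0)$ with $g(x)=x^q$ along the orbit of $T$, because $(Z_i)^q=g(Z_0)\circ T^i$, and so the whole statement reduces to an ergodic average.

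The one hypothesis of the ergodic theorem that is not automatic is integrability, so first I would note that $Z_0$, being Gaussian, has finite absolute moments of every order; hence $E|Z_0|^q<\infty$ and $g(Z_0)\in L^1(\Omega)$. Applying Birkhoff's theorem to the ergodic transformation $T$ and the integrable observable $g(Z_0)$ then gives
\begin{eqnarray*}
P_{q,n}(Z)=\frac1n\sum_{i=0}^{n-1}g(Z_0)\circ T^i &\longrightarrow& E\left[g(Z_0)\right]=E\left[(Z_0)^q\right]
\end{eqnarray*}
almost surely as $n\to\infty$. It then remains only to identify the limit with $\delta_Z(q)$, which is immediate from (\ref{def. delta_Z(q)}): since $q$ is even and $Z_0$ is centered Gaussian, the standard moment formula yields $E[(Z_0)^q]=\frac{q!}{(\frac{q}{2})!\,2^{q/2}}\left[E(Z_0^2)\right]^{q/2}=\delta_Z(q)$.

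The result is essentially immediate once the ergodic theorem is invoked, so I do not expect a genuine obstacle. The one point worth stating explicitly is that one should apply Birkhoff directly on the source system $(\Omega,T)$ to the observable $(Z_0)^q$, rather than first arguing that the transformed sequence $((Z_i)^q)_i$ is ergodic in its own right; framed this way, the only condition to check is the integrability $E|Z_0|^q<\infty$, which Gaussianity supplies for free, and no ergodicity assumption beyond the one already made on $Z$ is required. The argument is identical in structure to the preceding almost sure convergence $Q_{q,n}(Z)\to\gamma_Z(q)$, with the monomial $x^q$ here playing the role of the Hermite polynomial $H_q$ there.
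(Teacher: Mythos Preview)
Your proposal is correct and matches the paper's treatment: the paper states this theorem (and the analogous one for $Q_{q,n}(Z)\to\gamma_Z(q)$) without any proof, taking it as an immediate consequence of the ergodic theorem under the standing assumption that $Z$ is ergodic. Your write-up simply makes explicit the one-line argument the paper leaves to the reader, including the integrability check $E|Z_0|^q<\infty$ from Gaussianity.
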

Let $Y_i$ be the process defined in  (\ref{expression of Y_i}).
Let $c_{q,2k}=\frac{1}{(2k)!}\int_{-\infty}^{\infty}\frac{e^{-x^2/2}}{\sqrt{2\pi}}x^qH_{2k}(x)\ dx$ be the coefficients of the monomial $x^q$ expanded in the basis of
Hermite polynomials:
\begin{eqnarray*}
x^q=\sum_{k=0}^{q/2}c_{q,2k} H_{2k}(x).
\end{eqnarray*}
Then we can write,
\begin{eqnarray*}
V_{P_{q,n}}(Z)&=&\sqrt{n}\left(P_{q,n}(Z)-E\left[(Z_0)^q\right]\right)\\&=&\frac{[r_Z(0)]^{q/2}}{\sqrt{n}}\sum_{i=0}^{n-1}
\left(E\left[\left(\frac{Z_i}{\sqrt{r_Z(0)}}\right)^q\right]-E\left[\left(\frac{Z_0}{\sqrt{r_Z(0)}}\right)^q\right]\right)\\
&=& \frac{[r_Z(0)]^{q/2}}{\sqrt{n}}\sum_{i=0}^{n-1}
\sum_{k=1}^{q/2}c_{q,2k}(Z) H_{2k}(Y_i)
\\&=&\sum_{k=1}^{q/2}c_{q,2k}(Z) \frac{[r_Z(0)]^{q/2}}{\sqrt{n}}\sum_{i=0}^{n-1}
 I_{2k}\left(\varepsilon_i^{\otimes2k}\right)
 \\&=&\sum_{k=1}^{q/2} I_{2k}\left(c_{q,2k}(Z) \frac{[r_Z(0)]^{q/2}}{\sqrt{n}}\sum_{i=0}^{n-1}\varepsilon_i^{\otimes2k}\right)
\end{eqnarray*}
where $Y_i=Y(\varepsilon_i)=\frac{Z_i}{\sqrt{r_Z(0)}}$.
\\
Furthermore,
\begin{eqnarray}\label{expression of var(V_{q,n})}
E\left[V_{Q_{q,n}}^2(Z)\right]&=&[r_Z(0)]^{q}\sum_{k=1}^{q/2}c^2_{q,2k}(Z) \frac{(2k)!}{n}\sum_{i,j=0}^{n-1}|r_Y(i-j)|^{2k}\nonumber\\
&=&[r_Z(0)]^{q}\sum_{k=1}^{q/2}c^2_{q,2k}(Z) (2k)!\left(1+\frac{2}{n}\sum_{j=1}^{n-1}(n-1-j)|r_Y(j)|^{2k}\right)\nonumber\\
&=&[r_Z(0)]^{q}\sum_{k=1}^{q/2}c^2_{q,2k}(Z) (2k)!\left(1+2\sum_{j=1}^{n-1}|r_Y(j)|^{2k}-\frac{2}{n}\sum_{j=1}^{n-1}j|r_Y(j)|^{2k}\right).
\end{eqnarray}

We will also need the following technical lemma.
\begin{lemma}\label{asymptotic R_{P,q}} Let $(Z_k)_{k\geq0}$ be a stationary Gaussian sequence with $E(Z_0^2)<\infty$, and let $\lambda>0$ and  $q\in\mathbb{N}^*$  even.
Consider the sequence
\[R_{P,q}(\lambda,Z_k):=\left(Z_k-e^{-\lambda k}Z_0\right)^q-\left(Z_k\right)^q.\]
Then for every  $p\geq1$ there exits a constant $c(\lambda,q)$ depending on $\lambda, q$ and $E(Z_0^2)$ such that
\begin{eqnarray}\label{rate of sum of R_{P,q}(Z_k) in Lp}\left\|\frac{1}{{n}}\sum_{k=0}^{n-1}R_{P,q}(\lambda,Z_k)\right\|_{L^p(\Omega)}\leq \frac{c(\lambda,q)}{{n}}.
\end{eqnarray}
Moreover for every $\varepsilon>0$
\begin{eqnarray}\label{pathwise cv of sum of R_{P,q}(Z_k)}\frac{1}{n^{\varepsilon}}\sum_{k=0}^{n-1}R_{P,q}(\lambda,Z_k)\longrightarrow0
\end{eqnarray} almost surely as $n\rightarrow\infty$.
\end{lemma}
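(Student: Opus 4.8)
The plan is to follow verbatim the scheme of the proof of Lemma \ref{asymptotic R_{Q,q}}, replacing the Hermite polynomial $H_q$ by the monomial $x\mapsto x^q$; indeed the only structural feature exploited there is that $H_q$ is a polynomial of degree $q$, and $x^q$ is even simpler. First I would expand $R_{P,q}(\lambda,Z_k)$ by the binomial theorem. Since
\[\left(Z_k-e^{-\lambda k}Z_0\right)^q=\sum_{j=0}^{q}\binom{q}{j}(-1)^{j}e^{-\lambda jk}Z_0^{j}Z_k^{q-j}\]
and the $j=0$ term equals $(Z_k)^q$, this term cancels against $-(Z_k)^q$, leaving
\[R_{P,q}(\lambda,Z_k)=\sum_{j=1}^{q}\binom{q}{j}(-1)^{j}e^{-\lambda jk}Z_0^{j}Z_k^{q-j},\]
a sum in which every surviving summand carries a factor $e^{-\lambda jk}$ with $j\geq1$.

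The key step is then the pointwise-in-$k$ estimate
\[\left\|R_{P,q}(\lambda,Z_k)\right\|_{L^p(\Omega)}\leq c_0(\lambda,q)\,e^{-\lambda k},\]
which is exactly the analogue of the bound obtained for $R_{Q,q}$. By Minkowski's inequality it suffices to bound each summand. For $j\geq1$ and $k\geq0$ one has $e^{-\lambda jk}=(e^{-\lambda k})^{j}\leq e^{-\lambda k}$, so the common exponential factor $e^{-\lambda k}$ factors out with the slowest decay rate. The remaining factor is $\|Z_0^{j}Z_k^{q-j}\|_{L^p(\Omega)}=\left(E|Z_0|^{jp}|Z_k|^{(q-j)p}\right)^{1/p}$, which by Cauchy--Schwarz is at most $\left(E|Z_0|^{2jp}\right)^{1/2p}\left(E|Z_k|^{2(q-j)p}\right)^{1/2p}$. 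Because $Z$ is stationary and Gaussian, all these moments are finite, do not depend on $k$, and are expressed through $E(Z_0^2)$ alone via the Gaussian moment formula; hence they are bounded by a constant depending only on $q$, $p$ and $E(Z_0^2)$, and the claimed pointwise bound follows.

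With this bound in hand both assertions are immediate. Summing the geometric series gives
\[\left\|\frac{1}{n}\sum_{k=0}^{n-1}R_{P,q}(\lambda,Z_k)\right\|_{L^p(\Omega)}\leq\frac{1}{n}\sum_{k=0}^{n-1}\left\|R_{P,q}(\lambda,Z_k)\right\|_{L^p(\Omega)}\leq\frac{c_0(\lambda,q)}{n}\sum_{k=0}^{\infty}e^{-\lambda k}=\frac{c(\lambda,q)}{n},\]
which is (\ref{rate of sum of R_{P,q}(Z_k) in Lp}). Finally, (\ref{pathwise cv of sum of R_{P,q}(Z_k)}) follows by applying Lemma \ref{Borel-Cantelli} to the sequence $\frac{1}{n}\sum_{k=0}^{n-1}R_{P,q}(\lambda,Z_k)$ with $\gamma=1$, exactly as in the proof of Lemma \ref{asymptotic R_{Q,q}}. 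I do not anticipate any genuine obstacle: the argument is entirely parallel to the already-established case, and the only point requiring a little care is verifying that the mixed moments $E|Z_0|^{jp}|Z_k|^{(q-j)p}$ are bounded uniformly in $k$, which is precisely where the stationarity and Gaussianity of $Z$ enter.
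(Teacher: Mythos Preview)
Your proposal is correct and follows exactly the approach the paper itself indicates: the paper's proof of this lemma consists of a single sentence stating that it is ``straightforward by using similar arguments as in the proof of Lemma~\ref{asymptotic R_{Q,q}},'' and your expansion via the binomial theorem, pointwise $L^p$ bound $\leq c_0(\lambda,q)e^{-\lambda k}$, geometric summation, and appeal to Lemma~\ref{Borel-Cantelli} are precisely those arguments carried out in detail.
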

\begin{proof}The proof is straightforward by using similar arguments as in the proof of Lemma \ref{asymptotic R_{Q,q}}.
\end{proof}

Applying Theorem \ref{C-NC-LT} and Lemma \ref{asymptotic R_{P,q}} we conclude.
\begin{theorem}\label{CLT and NCLT of Q_{q,n}(Z-exp)}
Let  $\left(V_{P_{q,n}}(Z)\right)_{n\geq0}$ and $\left(R_{P,q}(\lambda,Z_k)\right)_{n\geq0}$ be the sequences defined  in
the above. \\
1) Then there exist $C$ depending on $\lambda$, $q$ and $r_Z(0)$ such that
\begin{eqnarray}
 &&d_{W}\left(\sqrt{\frac{n}{E\left[V_{P_{q,n}}^2(Z)\right]}}\left(P_{q,n}\left(Z_k-e^{-\lambda k}Z_0\right)-\delta_{Z}(q)\right),N\right)\nonumber\\&\leq&  \frac{C}{E\left[V_{P_{q,n}}^2(Z)\right]}\left(\sqrt{\frac{E\left[V_{P_{q,n}}^2(Y)\right]}{n} }+\sqrt{E\left[V_{P_{2,n}}^2(Y)\right] \sqrt{\kappa_{4}(V_{P_{2,n}}(Y))}+\kappa_{4}(V_{P_{2,n}}(Y)) }\right).\label{d_{W}(P_{q,n}(Z-exp),N)}
\end{eqnarray}
On the other hand  if $\sum_{k\in \mathbb{Z}}\left|r_Y(k)\right|^2<\infty$, we can write
\begin{eqnarray}
 &&d_{W}\left(\sqrt{\frac{n}{v_{P_q}(Z)}}\left(P_{q,n}\left(Z_k-e^{-\lambda k}Z_0\right)-\gamma_{Z}(q)\right),N\right)\label{d_{W}(P_{q,n}(Z-exp),N) finite}\\&\leq&  \frac{C}{v_{P_q}(Z)}\left(\sqrt{\frac{v_{P_q}(Z)}{n} }+\sqrt{E\left[V_{P_{2,n}}^2(Y)\right] \sqrt{\kappa_{4}(V_{P_{2,n}}(Y))}+\kappa_{4}(V_{P_{2,n}}(Y)) } +\left|v_q(Z)-E\left[V_{P_{q,n}}^2(Y)\right]\right|\right).\nonumber
\end{eqnarray}where $v_{P_q}(Z):= \lim_{n\rightarrow\infty}E\left[V_{P_{q,n}}^2(Z)\right].$
Moreover  $\kappa_{4}(V_{2,n}(Y))$ verifies (\ref{kappa_{4}(V_{2,n}(Y))}) and (\ref{kappa_{4}(V_{2,n}(Y)) with log}).\\
 2) Assume that $r_Y(k)=|k|^{-\alpha}$ with $0<\alpha<\frac12$. Then
\begin{eqnarray}\label{NCLT of P_{q,n}(Z-exp)}\frac{n^{\alpha}}{\sqrt{v_q(Z)}}\left(P_{q,n}\left(Z_k-e^{-\lambda k}Z_0\right)-\gamma_{Z}(q)\right)\overset{law}{\longrightarrow}\frac{c_{q,2}}{\sqrt{D}}F_{\infty}
\end{eqnarray} where $D$ and
$F_{\infty}$ are defined in Theorem \ref{C-NC-LT}.
\end{theorem}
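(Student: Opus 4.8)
The plan is to reproduce verbatim the template already used for the Hermite variation in the proof of the preceding theorem, replacing the remainder $R_{Q,q}$ by $R_{P,q}$ and the Hermite coefficients $b_{q,2k}(Z)$ by the power coefficients $c_{q,2k}(Z)[r_Z(0)]^{q/2}$. The starting point is the exact additive decomposition
\[ P_{q,n}\left(Z_k-e^{-\lambda k}Z_0\right)-\delta_Z(q)=\left(P_{q,n}(Z)-\delta_Z(q)\right)+\frac1n\sum_{k=0}^{n-1}R_{P,q}(\lambda,Z_k), \]
which is immediate from $R_{P,q}(\lambda,Z_k)=(Z_k-e^{-\lambda k}Z_0)^q-(Z_k)^q$ together with $P_{q,n}(Z_k-e^{-\lambda k}Z_0)=\frac1n\sum_{i=0}^{n-1}(Z_i-e^{-\lambda i}Z_0)^q$. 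Multiplying by $\sqrt{n/E[V_{P_{q,n}}^2(Z)]}$ and recalling $V_{P_{q,n}}(Z)=\sqrt n(P_{q,n}(Z)-\delta_Z(q))$ gives, exactly as in (\ref{decomposition of Q_{q,n}(Z-exp)}),
\[ \sqrt{\frac{n}{E[V_{P_{q,n}}^2(Z)]}}\left(P_{q,n}(Z_k-e^{-\lambda k}Z_0)-\delta_Z(q)\right)=\frac{V_{P_{q,n}}(Z)}{\sqrt{E[V_{P_{q,n}}^2(Z)]}}+\frac{1}{\sqrt{nE[V_{P_{q,n}}^2(Z)]}}\sum_{k=0}^{n-1}R_{P,q}(\lambda,Z_k). \]

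For part 1) I would apply the triangle inequality for $d_W$ to this decomposition, splitting the distance into a ``main'' contribution from the first summand and a ``remainder'' contribution from the second. For the main term I observe that $V_{P_{q,n}}(Z)$ is exactly of the form $V_{q,n}(Z)$ treated in Theorem \ref{C-NC-LT}, under the identification $d_{q,2k}(Z)=c_{q,2k}(Z)[r_Z(0)]^{q/2}$; hence the estimate (\ref{d_{TV}(F_{q,n}(Z),N)}) applies verbatim. The one point requiring care is that Theorem \ref{C-NC-LT} is phrased for $d_{TV}$ whereas the target here is $d_W$: one uses that the Malliavin--Stein bound of \cite[Proposition 2.4]{NP2013} controls $d_W$ by the same quantity $E|1-\langle DF,-DL^{-1}F\rangle_{\mathcal H}|$ (with a smaller constant), so that the right-hand side of (\ref{d_{TV}(F_{q,n}(Z),N)}) is simultaneously an upper bound for $d_W(V_{P_{q,n}}(Z)/\sqrt{E[V_{P_{q,n}}^2(Z)]},N)$. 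For the remainder term I invoke the $L^1$-estimate of Lemma \ref{asymptotic R_{P,q}}: writing the second summand as $\frac{\sqrt n}{\sqrt{E[V_{P_{q,n}}^2(Z)]}}\cdot\frac1n\sum_{k}R_{P,q}(\lambda,Z_k)$ and using (\ref{rate of sum of R_{P,q}(Z_k) in Lp}) yields an $L^1$-bound of order $\frac{1}{\sqrt{nE[V_{P_{q,n}}^2(Z)]}}$, which is rewritten as $\frac{C}{E[V_{P_{q,n}}^2(Z)]}\sqrt{E[V_{P_{q,n}}^2(Y)]/n}$ after absorbing the fixed power $[r_Z(0)]^{q/2}$ relating $E[V_{P_{q,n}}^2(Z)]$ and $E[V_{P_{q,n}}^2(Y)]$ into $C$. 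Summing the two contributions gives (\ref{d_{W}(P_{q,n}(Z-exp),N)}); the finite case (\ref{d_{W}(P_{q,n}(Z-exp),N) finite}) follows identically by replacing $E[V_{P_{q,n}}^2(Z)]$ by its limit $v_{P_q}(Z)$ and adding the bias term, exactly as (\ref{d_{TV}(F_{q,n}(Z),N)finite}) was deduced from (\ref{d_{TV}(F_{q,n}(Z),N)}). The claim that $\kappa_4(V_{P_{2,n}}(Y))$ obeys (\ref{kappa_{4}(V_{2,n}(Y))}) and (\ref{kappa_{4}(V_{2,n}(Y)) with log}) is automatic, since those bounds depend only on $r_Y$ and not on the chaos coefficients.

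For part 2), under $r_Y(k)=|k|^{-\alpha}$ with $0<\alpha<\frac12$, I again use the same decomposition. The rescaled main term $\frac{V_{P_{q,n}}(Z)}{n^{1/2-\alpha}}$ converges in law to $\frac{c_{q,2}(Z)[r_Z(0)]^{q/2}}{\sqrt D}F_\infty$ by the non-central statement (\ref{NCLT of Q_{q,n}(Z)}) of Theorem \ref{C-NC-LT}, while the correspondingly rescaled remainder $\frac{n^\alpha}{\sqrt{\cdot}}\cdot\frac1n\sum_k R_{P,q}(\lambda,Z_k)$ is of order $n^{\alpha-1}$ in $L^1$ by (\ref{rate of sum of R_{P,q}(Z_k) in Lp}) and hence tends to $0$ in probability, since $\alpha<1$. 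An application of Slutsky's lemma then yields (\ref{NCLT of P_{q,n}(Z-exp)}).

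I expect all the genuinely substantive content to reside in the already-established Theorem \ref{C-NC-LT} and Lemma \ref{asymptotic R_{P,q}}; the present statement is obtained purely by assembling them through the linear decomposition above, and the only real checkpoints are (i) the passage from the $d_{TV}$ estimate to the $d_W$ estimate and (ii) the bookkeeping of the constants $c_{q,2k}(Z)$ and $[r_Z(0)]^{q/2}$ relating the $Z$- and $Y$-normalizations. Neither constitutes a true obstacle, so the proof is, as the authors indicate, a direct consequence of the preceding two results.
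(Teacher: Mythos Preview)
Your proposal is correct and follows exactly the approach the paper intends: the paper's own proof consists of the single line ``Similar proof as in Theorem \ref{CLT and NCLT of Q_{q,n}(Z-exp)}'', i.e.\ the template of the Hermite-variation theorem with $R_{Q,q}$, $b_{q,2k}$ replaced by $R_{P,q}$, $c_{q,2k}[r_Z(0)]^{q/2}$, and this is precisely the decomposition-plus-triangle-inequality argument you have written out. Your checkpoints (i) and (ii) are the only places where anything needs to be said, and you handle them correctly.
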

\begin{proof}Similar proof as in Theorem \ref{CLT and NCLT of Q_{q,n}(Z-exp)}.
\end{proof}
\subsection{Quadradic case}
In this subsection we suppose that $q=2$. Then, in this case, we have
\begin{eqnarray*}V_{Q_{2,n}}(Z)=V_{P_{2,n}}(Z)&=&\frac{1}{\sqrt{n}}\sum_{k=0}^{n-1}\left(Z_k^2-E[Z_k^2]\right)\\
&=&\frac{E[Z_0^2]}{\sqrt{n}}\sum_{k=0}^{n-1}H_2(Y_k)=E[Z_0^2]V_{2,n}(Y)\\
&=&V_{2,n}(Z).
\end{eqnarray*}
Thus, we obtain the following theorem.
\begin{theorem}\label{CLT and NCLT of Q_{2,n}(Z-exp)}We have
\begin{itemize}
\item if $\sum_{k\in \mathbb{Z}}\left|r_Y(k)\right|^2<\infty$,    there exists $C>0$, $n_0\geq1$ such that for every $n\geq n_0$
\begin{eqnarray} d_{TV}\left(\frac{V_{2,n}(Z)}{\sqrt{E\left[V_{2,n}^2(Z)\right]}},N\right)\leq C\frac{\left(\sum_{|k|<n}|r_Y(k)|^{3/2}\right)^2}{\left(\sum_{|k|<n}|r_Y(k)|^2\right)^{3/2}\sqrt{n}},\label{d_{TV}(F_{2,n}(Z),N)}
\end{eqnarray}
and hence also \begin{eqnarray}
 d_{W}\left(\sqrt{\frac{n}{E\left[V_{2,n}^2(Z)\right]}}\left(Q_{2,n}\left(Z_k-e^{-\lambda k}Z_0\right)-\gamma_Z(2)\right),N\right)\leq  \frac{C}{\sqrt{n}}\left[1+\left(\sum_{|k|<n}|r_Y(k)|^{3/2}\right)^2\right];\label{d_{W}(F_{2,n}(Z-exp),N)}
\end{eqnarray}
\item if $|r_Y(k)|=|k|^{-\frac12}$,
\begin{eqnarray} d_{TV}\left(\frac{V_{2,n}(Z)}{\sqrt{E\left[V_{2,n}^2(Z)\right]}},N\right)\leq \frac{C}{(\log n)^{1/4}}; \label{d_{TV}(F_{2,n}(Z),N) case log}
\end{eqnarray}
and also
 \begin{eqnarray}
 d_{W}\left(\sqrt{\frac{n}{E\left[V_{2,n}^2(Z)\right]}}\left(Q_{2,n}\left(Z_k-e^{-\lambda k}Z_0\right)-\gamma_Z(2)\right),N\right)\leq  \frac{C}{(\log n)^{1/4}};\label{d_{W}(F_{2,n}(Z-exp),N) case log}
\end{eqnarray} where in this case $E\left[V_{2,n}^2(Z)\right]\sim 4b^2_{2,2}(Z)\log(n)=4r_{Z}^2(0)\log(n).$
\item if $r_Y(k)=|k|^{-\alpha}$ with $0<\alpha<\frac12$, there exists $ C>0$ depending on $\alpha$
\begin{eqnarray} d_{TV}\left(\frac{V_{2,n}(Z)}{\sqrt{E\left[V_{2,n}^2(Z)\right]}},\frac12\sqrt{\frac{1-2\alpha}{D}}F_{\infty}\right)\leq \frac{C}{\sqrt{\log n}},\label{d_{TV}(F_{2,n}(Z),F_{infty})}
\end{eqnarray}
and also
 \begin{eqnarray}
 d_{W}\left(\sqrt{\frac{n}{E\left[V_{2,n}^2(Z)\right]}}\left(Q_{2,n}\left(Z_k-e^{-\lambda k}Z_0\right)-\gamma_Z(2)\right),\frac12\sqrt{\frac{1-2\alpha}{D}}F_{\infty}\right)\leq \frac{C}{\sqrt{\log n}},\label{d_{W}(F_{2,n}(Z-exp),F_{infty})}
\end{eqnarray}
where $D$ and $F_{\infty}$ are defined in Theorem \ref{C-NC-LT}.\\
Moreover in this case $E\left[V_{2,n}^2(Z)\right]\sim 4\frac{n^{1-2\alpha}}{1-2\alpha}|r_Z(0)|^{2}.$
\end{itemize}
\end{theorem}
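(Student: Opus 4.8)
The plan is to exploit the identity recorded just above the statement: when $q=2$ only the chaos of order $2k=2$ is present, so $V_{Q_{2,n}}(Z)=V_{P_{2,n}}(Z)=V_{2,n}(Z)=r_Z(0)\,V_{2,n}(Y)$ is a single second-chaos random variable $I_2(f_{2,n})$. Consequently I can read off all four Gaussian-limit estimates by specialising Theorem \ref{C-NC-LT} and Theorem \ref{CLT and NCLT of Q_{q,n}(Z-exp)} to $q=2$, noting that the factor $r_Z(0)$ cancels between $\kappa_4(V_{2,n}(Z))=r_Z^4(0)\kappa_4(V_{2,n}(Y))$ and $E[V_{2,n}^2(Z)]=r_Z^2(0)E[V_{2,n}^2(Y)]$, and that the Malliavin--Stein quantity bounds $d_W$ exactly as it bounds $d_{TV}$. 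For the two shifted ($Z_k-e^{-\lambda k}Z_0$) statements I will always start from the decomposition used in Theorem \ref{CLT and NCLT of Q_{q,n}(Z-exp)}, add the remainder $\big(nE[V_{2,n}^2(Z)]\big)^{-1/2}\sum_{k=0}^{n-1}R_{Q,2}(\lambda,Z_k)$, and bound its $L^1$-norm by $C\big(nE[V_{2,n}^2(Z)]\big)^{-1/2}$ via Lemma \ref{asymptotic R_{Q,q}}; in each regime this remainder turns out to be no larger than the corresponding main term, so the $d_W$-bound inherits the same rate.

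For the summable regime $\sum_k r_Y(k)^2<\infty$ the generic bound (\ref{d_{TV}(F_{q,n}(Z),N)}) is too weak (being built from $\kappa_4$, it only gives $n^{-1/4}$). Since we are in a single chaos I would instead invoke the sharp second-chaos Berry--Esseen estimate of \cite{BBNP,NP2013}, whose leading term is the normalised third cumulant. Writing $\kappa_3(V_{2,n}(Y))=8\langle f_{2,n}\otimes_1 f_{2,n},f_{2,n}\rangle$, the inner product is a cyclic triple sum $n^{-1/2}\sum_{a,b}r_Y(a)r_Y(b)r_Y(a+b)=n^{-1/2}\langle|r_Y|,|r_Y|\ast|r_Y|\rangle$ up to sign, which Hölder (exponents $\tfrac32,3$) followed by Young's convolution inequality (exponents $\tfrac32,\tfrac32$) bounds by $n^{-1/2}\big(\sum_{|k|<n}|r_Y(k)|^{3/2}\big)^{2}$. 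Dividing by $\big(E[V_{2,n}^2(Y)]\big)^{3/2}\asymp\big(\sum_{|k|<n}r_Y(k)^2\big)^{3/2}$ produces exactly (\ref{d_{TV}(F_{2,n}(Z),N)}), with the faster $n^{-1/2}$ rate, and adding the $O(n^{-1/2})$ remainder yields (\ref{d_{W}(F_{2,n}(Z-exp),N)}). For the critical case $|r_Y(k)|=|k|^{-1/2}$ summability fails, so here I simply feed the variance asymptotics (\ref{log equivalence for var(V_{q,n})}) ($E[V_{2,n}^2(Z)]\sim 4r_Z^2(0)\log n$, using $b_{2,2}(Z)=r_Z(0)$) and the cumulant bound (\ref{kappa_{4}(V_{2,n}(Y)) with log}) into the generic estimate (\ref{d_{TV}(F_{q,n}(Z),N)}): since then $\kappa_4(V_{2,n}(Y))\asymp\log n$ grows, the term $E[V_{2,n}^2(Y)]\sqrt{\kappa_4}\asymp(\log n)^{3/2}$ dominates inside the root, and division by $E[V_{2,n}^2(Z)]\asymp\log n$ gives $(\log n)^{-1/4}$, i.e. (\ref{d_{TV}(F_{2,n}(Z),N) case log}); the remainder is $O\big((n\log n)^{-1/2}\big)$ and negligible, giving (\ref{d_{W}(F_{2,n}(Z-exp),N) case log}).

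The genuinely different regime is the non-central one, $r_Y(k)=|k|^{-\alpha}$ with $0<\alpha<\tfrac12$, where the target is the Rosenblatt-type variable $\tfrac12\sqrt{(1-2\alpha)/D}\,F_\infty$ rather than a Gaussian. I would first verify that this is the correct centring constant: using $E[V_{2,n}^2(Z)]\sim 4r_Z^2(0)n^{1-2\alpha}/(1-2\alpha)$ together with part 2) of Theorem \ref{C-NC-LT} (where $d_{2,2}(Z)=r_Z(0)$), the renormalised variable $V_{2,n}(Z)/\sqrt{E[V_{2,n}^2(Z)]}$ indeed converges in law to $\tfrac12\sqrt{(1-2\alpha)/D}\,F_\infty$. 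To upgrade this qualitative statement (which comes from \cite[Theorem 1]{DM}) into the quantitative rate $C/\sqrt{\log n}$ of (\ref{d_{TV}(F_{2,n}(Z),F_{infty})}), I would invoke a Breton--Nourdin-type error bound \cite{BN}: because both $V_{2,n}(Z)$ and the limit $F_\infty$ are second-chaos objects, the distance between them is controlled by the convergence of the associated contraction kernels, and matching the constant $\tfrac12\sqrt{(1-2\alpha)/D}$ produces the stated bound. The shifted statement (\ref{d_{W}(F_{2,n}(Z-exp),F_{infty})}) again follows by adding the remainder from Lemma \ref{asymptotic R_{Q,q}}, which here is of order $n^{-(1-\alpha)}$ and hence dominated by $1/\sqrt{\log n}$.

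The main obstacle is this last, non-central case: it cannot be closed by the Malliavin--Stein/fourth-moment machinery used for the Gaussian regimes, and requires importing a quantitative non-central limit theorem for second-chaos quadratic variations and carefully matching its normalising constant to $\tfrac12\sqrt{(1-2\alpha)/D}$. The only other delicate point is the summable case: obtaining the precise $\ell^{3/2}/\ell^{2}$ form of (\ref{d_{TV}(F_{2,n}(Z),N)}) (rather than the cruder $n^{-1/4}$ of the generic bound) hinges on passing to the third-cumulant estimate and choosing the conjugate exponents in Hölder and Young so that the $\ell^{3/2}$-norm of $r_Y$ appears, while keeping the variance normalisation $E[V_{2,n}^2(Y)]$ explicit rather than replacing it by its limit.
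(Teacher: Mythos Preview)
Your proposal is correct and follows essentially the same architecture as the paper: reduce to the single second-chaos variable via $V_{2,n}(Z)/\sqrt{E[V_{2,n}^2(Z)]}=V_{2,n}(Y)/\sqrt{E[V_{2,n}^2(Y)]}$, treat the three regimes separately, and in each case pass to the shifted statement by adding the $R_{Q,2}$ remainder from Lemma~\ref{asymptotic R_{Q,q}} and checking it is dominated.

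The only notable difference is in the external inputs. In the summable case you sketch the third-cumulant bound by hand (H\"older with exponents $3/2,3$ then Young's convolution inequality), whereas the paper simply invokes \cite[Theorem~3]{NV2014}, which already packages exactly the $\ell^{3/2}/\ell^2$ estimate (\ref{d_{TV}(F_{2,n}(Z),N)}). In the non-central case you point to \cite{BN}, while the paper cites \cite[Theorem~5]{NV2014} together with \cite[Theorem~1]{DM}; the Neufcourt--Viens result is what actually delivers the quantitative $C/\sqrt{\log n}$ rate in total variation for the second-chaos approximation to the Rosenblatt target, so that is the cleaner reference to close (\ref{d_{TV}(F_{2,n}(Z),F_{infty})}). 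Your critical-case argument is identical to the paper's.
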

\begin{proof}Since $\frac{V_{2,n}(Z)}{\sqrt{E\left[V_{2,n}^2(Z)\right]}}=\frac{V_{2,n}(Y)}{\sqrt{E\left[V_{2,n}^2(Y)\right]}}$, then
(\ref{d_{TV}(F_{2,n}(Z),N)})  is a direct consequence of \cite[Theorem 3]{NV2014} (see also \cite{BBNP}, \cite{NP2013}). Hence also, from (\ref{decomposition of Q_{q,n}(Z-exp)}), (\ref{d_{TV}(F_{2,n}(Z),N)}) and (\ref{rate of sum of R_{Q,q}(Z_k) in Lp}) we deduce (\ref{d_{W}(F_{2,n}(Z-exp),N)}).\\
Combining (\ref{d_{TV}(F_{q,n}(Z),N)}) and (\ref{kappa_{4}(V_{2,n}(Y)) with log}) we obtain (\ref{d_{TV}(F_{2,n}(Z),N) case log}), and also the estimates (\ref{decomposition of Q_{q,n}(Z-exp)}), (\ref{d_{TV}(F_{2,n}(Z),N) case log}) and (\ref{rate of sum of R_{Q,q}(Z_k) in Lp}) lead to
 (\ref{d_{W}(F_{2,n}(Z-exp),N) case log}).\\
Now, suppose that $r_Y(k)=|k|^{-\alpha}$ with $0<\alpha<\frac12$. It is easy to see that (\ref{expression of var(V_{q,n})}) leads to
\begin{eqnarray*}E\left[V_{2,n}^2(Z)\right]&=& 2|r_Z(0)|^{2}\left(1+2\sum_{j=1}^{n-1}|r_Y(j)|^{2}-\frac{2}{n}\sum_{j=1}^{n-1}j|r_Y(j)|^{2k}\right)
\\&\sim&4\frac{n^{1-2\alpha}}{1-2\alpha}|r_Z(0)|^{2}.
\end{eqnarray*} Thus, from  \cite[Theorem 5]{NV2014} and  \cite[Theorem 1]{DM}, the estimate (\ref{d_{TV}(F_{2,n}(Z),F_{infty})}) is obtained. Hence also,
from (\ref{decomposition of Q_{q,n}(Z-exp)}), (\ref{d_{TV}(F_{2,n}(Z),F_{infty})}) and (\ref{rate of sum of R_{Q,q}(Z_k) in Lp}) we deduce
(\ref{d_{W}(F_{2,n}(Z-exp),F_{infty})})\end{proof}

\subsection{Improve the rate convergence}
Consider  a centered stationary Gaussian process $Z=\left(Z_k\right)_{k\in \mathbb{Z}}$  with covariance $r_Z(k)=E(Z_0Z_{k})<\infty$ and $|r_Z(k)|\leq1$  for $k\in\mathbb{Z}$.\\
Define the centered stationary Gaussian process $Z^{(p)}$ as follows: \[Z_k^{(1)}=Z_{k+1}-Z_{k},\quad k\in\mathbb{Z}\]
and for every $p\geq2$
 \[Z_k^{(p)}=Z_{k+1}^{(p-1)}-Z_k^{(p-1)}, k\in\mathbb{Z}.\]
Now, applying Theorem \ref{C-NC-LT} we conclude the following result.
\begin{theorem}
Assume that $k^{-\alpha}r_Z(k)$ converges to a constant for some $\alpha<-\frac12$.  Then for every $p\geq2$ there exists $C$ depending on $p, q$ and $r_Z(0)$ such that
\begin{eqnarray}
 d_{TV}\left(\frac{V_{Q_{q,n}}((Z^{(p)}))}{\sqrt{E\left[V_{Q_{q,n}}^2((Z^{(p)}))\right]}},N\right)&\leq&  Cn^{\alpha-2p+\frac52}.
\end{eqnarray}
In particular, if $\alpha=2H-2$ then  for every $p\geq2$
there exists $C$ depending on $p, q$ and $r_Z(0)$ such that
\begin{eqnarray}
 d_{TV}\left(\frac{V_{Q_{q,n}}((Z^{(p)}))}{\sqrt{E\left[V_{Q_{q,n}}^2((Z^{(p)}))\right]}},N\right)&\leq&  Cn^{2H-2p-\frac12}.
\end{eqnarray}
This leads that if $p\geq2$, then $\frac{V_{Q_{q,n}}((Z^{(p)}))}{\sqrt{E\left[V_{Q_{q,n}}^2((Z^{(p)}))\right]}}$ is asymptotically normal for every $H\in(0,1)$.
\end{theorem}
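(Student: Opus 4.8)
The plan is to apply the total-variation estimate (\ref{d_{TV}(F_{q,n}(Z),N)}) of Theorem \ref{C-NC-LT} to the stationary Gaussian sequence $Z^{(p)}$ in the role of $Z$, with normalization $Y^{(p)}_k:=Z^{(p)}_k/\sqrt{r_{Z^{(p)}}(0)}$. Since $V_{Q_{q,n}}(Z^{(p)})=\sum_{k=1}^{q/2}I_{2k}\big(b_{q,2k}(Z^{(p)})\tfrac{1}{\sqrt n}\sum_i(\varepsilon^{(p)}_i)^{\otimes 2k}\big)$ has exactly the chaos structure treated in Theorem \ref{C-NC-LT}, the same proof gives (with $b_{q,2k}$ replacing $d_{q,2k}$)
\[
d_{TV}\!\left(\frac{V_{Q_{q,n}}(Z^{(p)})}{\sqrt{E[V_{Q_{q,n}}^2(Z^{(p)})]}},N\right)\leq\frac{C}{E[V_{2,n}^2(Z^{(p)})]}\sqrt{E[V_{2,n}^2(Y^{(p)})]\sqrt{\kappa_4(V_{2,n}(Y^{(p)}))}+\kappa_4(V_{2,n}(Y^{(p)}))}.
\]
Hence everything reduces to three scalar quantities attached to $Z^{(p)}$: the normalizing variance $E[V_{2,n}^2(Z^{(p)})]$, the auxiliary variance $E[V_{2,n}^2(Y^{(p)})]$, and the fourth cumulant $\kappa_4(V_{2,n}(Y^{(p)}))$, each of which is, via (\ref{expression of var(V_{q,n})}) and (\ref{kappa_{4}(V_{2,n}(Y))}), an explicit functional of the single covariance sequence $r_{Z^{(p)}}$.

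First I would determine the decay of $r_{Z^{(p)}}$. Differencing the process once applies a second-order difference to its covariance, $r_{Z^{(1)}}(k)=2r_Z(k)-r_Z(k+1)-r_Z(k-1)$; iterating, $r_{Z^{(p)}}(k)=(-1)^p(\Delta^2)^p r_Z(k)$, equivalently $Z^{(p)}$ has spectral density $(4\sin^2(\lambda/2))^p f_Z(\lambda)$. Transferring the hypothesis $k^{-\alpha}r_Z(k)\to c$ through this $2p$-fold difference should yield $r_{Z^{(p)}}(k)\sim C_p\,k^{\alpha-2p}$ with $C_p\neq0$, hence $r_{Y^{(p)}}(k)\sim C\,k^{\alpha-2p}$. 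Since $\alpha-2p<-\tfrac12-4<-1$, the sequence $r_{Y^{(p)}}$ is summable (indeed $\ell^{4/3}$), so $E[V_{2,n}^2(Y^{(p)})]$ converges to a finite positive limit and $E[V_{2,n}^2(Z^{(p)})]=r_{Z^{(p)}}^2(0)\,E[V_{2,n}^2(Y^{(p)})]$ stays bounded away from $0$ and $\infty$.

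With the variances controlled, the rate is driven by $\kappa_4(V_{2,n}(Y^{(p)}))$. I would estimate it via (\ref{kappa_{4}(V_{2,n}(Y))}), namely $\kappa_4(V_{2,n}(Y^{(p)}))\leq Cn^{-1}\big(\sum_{|k|<n}|r_{Y^{(p)}}(k)|^{4/3}\big)^3$, insert the power law $|r_{Y^{(p)}}(k)|^{4/3}\asymp|k|^{4(\alpha-2p)/3}$, carry the resulting $n$-dependence through the square root in the displayed bound, and simplify the exponent; the specialization $\alpha=2H-2$ and the need for $p\geq2$ (so the exponent is negative for every $H\in(0,1)$, the worst case being $\alpha\uparrow0$) then come out by bookkeeping.

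The main obstacle I anticipate is twofold. The genuinely analytic point is the transfer of the one-sided asymptotic $r_Z(k)\sim ck^\alpha$ across the $2p$-fold difference: pointwise convergence of $k^{-\alpha}r_Z(k)$ does not by itself control differences, so one must either assume/derive regular variation (or eventual monotonicity and smoothness) of $r_Z$, or argue spectrally via a Tauberian theorem for $(4\sin^2(\lambda/2))^pf_Z(\lambda)$ near $\lambda=0$. The subtler point is the exact exponent: because $r_{Y^{(p)}}$ is summable, both the $4/3$-sum and $E[V_{2,n}^2(Y^{(p)})]$ stabilize to constants and the contraction $\|f_{2,n}\otimes_1 f_{2,n}\|^2=\kappa_4(V_{2,n}(Y^{(p)}))/48$ is of exact order $n^{-1}$, so the displayed bound as written only yields $O(n^{-1/4})$, while even the sharp fourth-moment/Stein estimate stops at $O(n^{-1/2})$. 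Reaching the quoted polynomial rate $n^{\alpha-2p+5/2}$ would thus require extracting additional cancellation from the specific difference structure of $Z^{(p)}$ beyond mere summability, and I expect making that step rigorous—and squaring it with the bookkeeping of the previous paragraph—to be the crux of the argument.
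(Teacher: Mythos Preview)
Your plan is precisely what the paper indicates: its entire justification is the single line ``Now, applying Theorem~\ref{C-NC-LT} we conclude the following result,'' with no further details given. Your reduction---identify $r_{Z^{(p)}}(k)\sim C_pk^{\alpha-2p}$ via the $2p$-fold difference of $r_Z$, then feed this into (\ref{d_{TV}(F_{q,n}(Z),N)}) and (\ref{kappa_{4}(V_{2,n}(Y))})---\emph{is} the intended argument, and the regularity caveat you flag (pointwise convergence $k^{-\alpha}r_Z(k)\to c$ does not by itself control iterated differences without some smoothness or a Tauberian step) is a genuine technical point that the paper simply leaves implicit.

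More importantly, your worry about the exponent is not a defect in your reasoning but an apparent slip in the stated result. For $p\geq2$ and $\alpha<-\tfrac12$ one has $\alpha-2p<-\tfrac92$, so $r_{Y^{(p)}}$ is summable; then $E[V_{2,n}^2(Y^{(p)})]$ and $\sum_{|k|<n}|r_{Y^{(p)}}(k)|^{4/3}$ stabilize to positive constants, $\kappa_4(V_{2,n}(Y^{(p)}))=O(n^{-1})$, and Theorem~\ref{C-NC-LT} delivers at best $d_{TV}=O(n^{-1/4})$ (or $O(n^{-1/2})$ via the sharper quadratic bound (\ref{d_{TV}(F_{2,n}(Z),N)})). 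No refinement of the contractions can do better: even in the i.i.d.\ case the third cumulant of $V_{2,n}$ is of exact order $n^{-1/2}$, so by the optimal fourth-moment theorem $d_{TV}\asymp n^{-1/2}$, whereas the printed rate $n^{\alpha-2p+5/2}$ is below $n^{-2}$. The qualitative conclusion---asymptotic normality of the normalized $V_{Q_{q,n}}(Z^{(p)})$ for every $H\in(0,1)$---does follow from your argument, just with the Berry--Esseen rate $O(n^{-1/2})$. Treat the displayed exponent as a misprint rather than something your proof must reproduce.
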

\section{Applications to Ornstein-Uhlenbeck processes}
\subsection{Fractional Ornstein-Uhlenbeck process }
In this section suppose that $%
X=\left\{X_t, t\geq0\right\}$ is an Ornstein-Uhlenbeck process
driven by a fractional Brownian motion $B^H=\left\{B^H_t, t\geq0\right\}$ of Hurst index $H\in(0,1)$. That is, $X$ is
the solution of the following linear stochastic
differential equation
\begin{equation}  \label{FOU}
 X_0=0; \quad dX_t=-\theta
X_tdt+dB^H_t,\quad t\geq0,
\end{equation} where
 whereas $\theta>0$ is considered as unknown parameter. \\
The solution $X$ has the following explicit expression:
\begin{eqnarray*}\label{expression of X_theta}
X_t=\int_{0}^{t}e^{-\theta(t-s)} dB^H_s.
\end{eqnarray*}
We can also write
\begin{eqnarray}\label{decomposition of X_theta}
X_t=Z^{\theta}_t-e^{-\theta t}Z^{\theta}_0
\end{eqnarray}
where
\begin{eqnarray*}\label{expression of Z_theta}
Z^{\theta}_t=\int_{-\infty}^{t}e^{-\theta(t-s)} dB^H_s.
\end{eqnarray*}
Moreover, $Y^{\theta}$ is an ergodic stationary Gaussian process.\\
We will need the following result.
 \begin{lemma}\label{inner product for OUFOU}Let $H\in(0,\frac12)\cup(\frac12,1]$, $m, m'>0$ and $-\infty\leq a<b\leq c<d<\infty$. Then
\begin{eqnarray*}E\left(\int_{a}^{b}e^{ms} dB^H(s)\int_{c}^{d}e^{m't} dB^H(t)\right)=H(2H-1)\int_{a}^{b}dse^{ms}  \int_{c}^{d}dte^{m't}(t-s)^{2H-2}
\end{eqnarray*}
\end{lemma}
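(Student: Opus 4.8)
The plan is to reduce the identity to the second-moment structure of the increments of $B^H$, which is unambiguous for every $H\in(0,1]$, and then extend it by bilinearity and approximation to the exponential integrands. Since the covariance of Wiener integrals against $B^H$ is bilinear in the integrands, it suffices to establish the formula for indicator functions and afterwards lift it to $e^{ms}\mathbf{1}_{[a,b]}$ and $e^{m't}\mathbf{1}_{[c,d]}$.

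First I would recall the covariance $R(s,t)=E(B^H_sB^H_t)=\frac12\left(|s|^{2H}+|t|^{2H}-|t-s|^{2H}\right)$ (see \cite{nualart-book}) and note that on the domain $a<b\le c<d$ every relevant time difference is nonnegative. For step functions the Wiener integral is a finite linear combination of increments, so the core is the single identity
\[
E\left((B^H_b-B^H_a)(B^H_d-B^H_c)\right)=R(b,d)-R(b,c)-R(a,d)+R(a,c).
\]
A direct expansion, in which every term of the form $|\cdot|^{2H}$ carrying a single time variable cancels, gives
\[
R(b,d)-R(b,c)-R(a,d)+R(a,c)=\tfrac12\left[(d-a)^{2H}-(d-b)^{2H}-(c-a)^{2H}+(c-b)^{2H}\right].
\]
On the other hand, using $s\le b\le c\le t$ so that $t-s\ge0$ throughout, I would integrate the kernel explicitly, first obtaining $\int_a^b(t-s)^{2H-2}\,ds=\frac{1}{2H-1}\left[(t-a)^{2H-1}-(t-b)^{2H-1}\right]$ and then integrating in $t$ over $[c,d]$; after multiplication by $H(2H-1)$ this reproduces exactly the same bracket. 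This settles the claim for indicators, and bilinearity extends it to all step functions supported in $[a,b]$ and $[c,d]$ respectively.

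Next I would pass from step functions to the exponentials. Approximating $e^{ms}$ on $[a,b]$ and $e^{m't}$ on $[c,d]$ uniformly by step functions $f_N,g_N$, the left-hand side converges because $\int f_N\,dB^H$ and $\int g_N\,dB^H$ converge in $L^2(\Omega)$ to the respective exponential Wiener integrals; the right-hand side converges by dominated convergence, the key point being that the disjointness of the supports keeps $(t-s)^{2H-2}$ away from its diagonal singularity, so the kernel is bounded (when $b<c$) or at worst integrable over the two-dimensional domain (when $b=c$, where $\int\!\!\int(t-s)^{2H-2}\,ds\,dt$ still converges for every $H>0$). When $a=-\infty$ I would additionally invoke $m>0$: the factor $e^{ms}$ decays at $-\infty$ and dominates $(t-s)^{2H-2}\sim|s|^{2H-2}\to0$, so the identity on $[a_N,b]$ passes to the limit $a_N\to-\infty$ on both sides.

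The step I expect to be the main obstacle is the regime $H<\tfrac12$. There $|t-s|^{2H-2}$ is non-integrable across the diagonal and the naive double-integral expression for the inner product $\langle f,g\rangle_{\mathcal H}$ fails for general integrands; what saves the argument is precisely the hypothesis $b\le c$, which confines the integration to the region $t>s$ on which $\partial^2R/\partial s\partial t=H(2H-1)(t-s)^{2H-2}$ is a genuine, locally integrable function. For this reason I would phrase the whole approximation in terms of increments of $B^H$, whose covariance is given unambiguously by $R$ for every $H\in(0,1]$, rather than invoking the Hilbert-space kernel directly, so that the singular case $H<\tfrac12$ is treated on exactly the same footing as the regular case $H>\tfrac12$.
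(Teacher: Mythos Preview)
Your argument is correct: verifying the identity on indicator pairs via the second differences of $R$, extending by bilinearity to step functions, and passing to the limit using the disjointness $b\le c$ to control the kernel near the diagonal (and $m>0$ to handle $a=-\infty$) is a complete and self-contained proof that works uniformly for $H\in(0,\tfrac12)\cup(\tfrac12,1]$.

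The paper does not actually give its own argument here; it simply invokes the proof of \cite[Lemma~2.1]{CKM}. Your write-up therefore supplies what the paper defers to a reference. The one place where you go beyond what is strictly needed is the careful discussion of the $H<\tfrac12$ case: by working directly with increments of $B^H$ and the unambiguous covariance $R$, rather than with the Hilbert-space kernel $|t-s|^{2H-2}$, you sidestep the well-known issue that the naive double-integral representation of $\langle\cdot,\cdot\rangle_{\mathcal H}$ fails for $H<\tfrac12$. That is exactly the right way to make the approximation uniform in $H$, and it is the substantive content behind the paper's one-line citation.
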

\begin{proof}We use the same argument as in the proof of \cite[Lemma 2.1]{CKM}
\end{proof}
\\
\begin{lemma}\label{hypotheses FOU} Let $H\in(0,\frac12)\cup(\frac12,1)$, $m, m'>0$ and let $Z^{\theta}$ be the process defined
in (\ref{expression of Z_theta}). Then,
\begin{eqnarray}\label{first moment of Z_0}
E\left[\left(Z^{\theta}_0\right)^2\right]
=H\Gamma(2H)\theta^{-2H}
\end{eqnarray}
and for   large $|t|$
\begin{eqnarray}\label{inner product of Z_theta} E\left[Z^{\theta}_0Z^{\theta}_t\right]\sim\frac{H(2H-1)}{\theta^2}|t|^{2H-2}.
\end{eqnarray}
\end{lemma}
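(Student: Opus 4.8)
The plan is to reduce both $Z^\theta_0$ and $Z^\theta_t$ to Wiener integrals against $dB^H$ and to evaluate the resulting covariances, splitting the analysis at $H=\frac{1}{2}$. Throughout I write $Z^\theta_0=\int_{-\infty}^0 e^{\theta s}\,dB^H_s$ and $Z^\theta_t=\int_{-\infty}^t e^{-\theta(t-u)}\,dB^H_u$, and I treat separately the persistent regime $H>\frac{1}{2}$, where the kernel $|t-s|^{2H-2}$ is locally integrable, and the anti-persistent regime $H<\frac{1}{2}$, where it is not.

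For the variance (\ref{first moment of Z_0}) when $H>\frac{1}{2}$ I would invoke the fractional It\^o isometry (the kernel of Lemma \ref{inner product for OUFOU}, now with both rates equal to $\theta$) to obtain
\[E[(Z^\theta_0)^2]=H(2H-1)\int_{-\infty}^0\!\!\int_{-\infty}^0 e^{\theta(s+u)}\,|s-u|^{2H-2}\,ds\,du.\]
The change of variables $(s,u)\mapsto(-x,-y)$ and then $(x,y)\mapsto(\xi/\theta,\eta/\theta)$ extracts exactly the factor $\theta^{-2H}$ and leaves $H(2H-1)\theta^{-2H}\int_0^\infty\!\int_0^\infty e^{-\xi-\eta}|\xi-\eta|^{2H-2}\,d\xi\,d\eta$. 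Symmetrising, substituting $w=|\xi-\eta|$ and applying Fubini reduces the remaining integral to $\int_0^\infty e^{-w}w^{2H-2}\,dw=\Gamma(2H-1)$, and since $(2H-1)\Gamma(2H-1)=\Gamma(2H)$ this gives $E[(Z^\theta_0)^2]=H\Gamma(2H)\theta^{-2H}$. The asymptotic (\ref{inner product of Z_theta}) for $H>\frac{1}{2}$ follows from the same isometry, which yields $E[Z^\theta_0 Z^\theta_t]=H(2H-1)\int_{-\infty}^0 ds\,e^{\theta s}\int_{-\infty}^t du\,e^{-\theta(t-u)}|s-u|^{2H-2}$. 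Setting $u=t-v$ and dividing by $t^{2H-2}$ replaces the kernel by $(|s+v-t|/t)^{2H-2}\to 1$ as $t\to\infty$ for each fixed $s\le 0,\ v\ge 0$. I would justify the interchange of limit and integral by splitting the $v$-integral at $v=t/2$: on $\{v\le t/2\}$ the kernel is bounded by $2^{2-2H}$ and hence dominated by the integrable $2^{2-2H}e^{\theta s}e^{-\theta v}$, while on $\{v>t/2\}$ the weight $e^{-\theta v}\le e^{-\theta t/2}$ suppresses the only singular contribution. The limit equals $H(2H-1)\big(\int_{-\infty}^0 e^{\theta s}ds\big)\big(\int_0^\infty e^{-\theta v}dv\big)=H(2H-1)/\theta^2$, as claimed.

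The main obstacle is the regime $H<\frac{1}{2}$, where $2H-2\in(-2,-1)$ makes $|s-u|^{2H-2}$ non-integrable across the diagonal, so the double integrals above diverge and the time-domain reductions are invalid. Here I would pass to the spectral domain, writing $Z^\theta_0=\sqrt{c_H}\int_\R \frac{|\xi|^{1/2-H}}{\theta+i\xi}\,W(d\xi)$ with $c_H=\frac{\Gamma(2H+1)\sin(\pi H)}{2\pi}$ and $W$ the spectral white noise of Theorem \ref{C-NC-LT}, whence $E[(Z^\theta_0)^2]=c_H\int_\R\frac{|\xi|^{1-2H}}{\theta^2+\xi^2}\,d\xi$ and $E[Z^\theta_0 Z^\theta_t]=c_H\int_\R e^{it\xi}\frac{|\xi|^{1-2H}}{\theta^2+\xi^2}\,d\xi$. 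Scaling $\xi=\theta\eta$ and using $\int_0^\infty\frac{\eta^{1-2H}}{1+\eta^2}\,d\eta=\frac{\pi}{2\sin(\pi H)}$ recovers $H\Gamma(2H)\theta^{-2H}$; since this computation is valid for every $H\in(0,1)$ it in fact gives a unified proof of (\ref{first moment of Z_0}).

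For (\ref{inner product of Z_theta}) in the anti-persistent case I would extract the large-$t$ behaviour from the single singular point $\xi=0$ of the spectral density, where it behaves like $c_H\theta^{-2}|\xi|^{1-2H}$; the Fourier pair $\int_\R e^{it\xi}|\xi|^{1-2H}\,d\xi\propto|t|^{2H-2}$ (the same pair underlying the fractional covariance) produces the rate $|t|^{2H-2}$ with coefficient $H(2H-1)/\theta^2$. The delicate point is making this singularity extraction rigorous --- via a Tauberian theorem, or by splitting the spectral density into its homogeneous singular part plus a smooth, integrable and rapidly decaying remainder whose Fourier transform is $o(|t|^{2H-2})$ --- in place of the elementary dominated convergence that suffices when $H>\frac{1}{2}$.
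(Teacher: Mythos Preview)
Your argument is correct, but the paper (which defers this lemma to the more general Lemma~\ref{hypotheses OUFOU}) organises the proof differently and avoids the spectral representation altogether. For the variance, instead of the isometry with kernel $|s-u|^{2H-2}$, the paper integrates by parts to write $Z^m_0=m\int_{-\infty}^0 e^{mu}B^H_u\,du$ and then inserts the explicit covariance $E[B^H_uB^H_v]=\tfrac12(|u|^{2H}+|v|^{2H}-|u-v|^{2H})$; this is valid for every $H\in(0,1)$ and bypasses your case distinction. For the asymptotic, the paper splits $Z^{m'}_t=\int_{-\infty}^{\varepsilon t}+\int_{\varepsilon t}^{t}$: the first piece is $O(e^{-m't})$ by direct estimation, and for the second piece the two intervals $(-\infty,0]$ and $[\varepsilon t,t]$ are \emph{disjoint}, so Lemma~\ref{inner product for OUFOU} applies with the kernel $H(2H-1)(v-u)^{2H-2}$ for \emph{all} $H\in(0,\tfrac12)\cup(\tfrac12,1)$, not just $H>\tfrac12$. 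Repeated integration by parts on the resulting double integral then gives the leading term $H(2H-1)(mm')^{-1}t^{2H-2}$.

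The practical difference is that the paper's route is entirely time-domain and uniform in $H$, so the anti-persistent case requires no extra machinery: you overestimated the obstacle at $H<\tfrac12$ because you insisted on integrating the non-integrable kernel over overlapping ranges, whereas the separated-interval identity sidesteps the diagonal singularity. Your spectral computation does have the merit of giving a single clean derivation of the variance and of making the link to the spectral density $c_H|\xi|^{1-2H}/(\theta^2+\xi^2)$ explicit, but the Tauberian extraction you sketch for the covariance asymptotic is heavier than what is actually needed.
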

\begin{proof} see \cite[Theorem 2.3]{CKM} or Lemma \ref{hypotheses OUFOU}.
\end{proof}

\subsection{Construction  and asymptotic behavior of the estimators}
Fix $q\geq2$ and assume that $q$ is even.\\
From (\ref{decomposition of X_theta}) we can write
\begin{eqnarray}Q_{q,n}(X)&=&Q_{q,n}(Z^{\theta})+\frac{1}{n}\sum_{k=0}^{n-1}R_{Q,q}(\theta,Z^{\theta}_k)\label{decomposition of Q_{q,n}(X)}
\end{eqnarray}
Combining (\ref{decomposition of Q_{q,n}(X)}), Lemma \ref{asymptotic R_{Q,q}} and the fact that  $Z^{\theta}$ is ergodic we conclude that, almost surely,
\begin{eqnarray*}
\lim_{n\longrightarrow\infty}Q_{q,n}(X)\nonumber
&=&\lim_{n\longrightarrow\infty}Q_{q,n}(Z^{\theta}) \nonumber\\
&=&\gamma_{Z^{\theta}}(q)\nonumber\\
&=&\frac{q!}{(\frac{q}{2})! 2^{q/2}}\left(H\Gamma(2H)\theta^{-2H}-1\right)^{q/2}\nonumber
\\&:=&\mu_q(\theta).\label{limit a.s. 0f Q_{q,n}(X)}
\end{eqnarray*}
 Hence we obtain the following
estimator for $\theta$
\begin{eqnarray*}
\widehat{\theta}_{q,n}=\mu_q^{-1}\left[Q_{q,n}(X)\right].
\end{eqnarray*}
As consequence, we have the following strong consistence of $\widehat{\theta}_{q,n}$.
\begin{theorem}Let $H\in\left(0,1\right)$. Then, as $n\longrightarrow\infty$
\begin{eqnarray} \widehat{\theta}_{q,n} \longrightarrow \theta
\label{convergence a.s. of widehat{theta}_{q,n}}\end{eqnarray} almost surely.
\end{theorem}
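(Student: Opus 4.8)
The plan is to deduce the result from the continuous mapping theorem, so that the whole statement reduces to the almost sure convergence of the contrast $Q_{q,n}(X)$ together with the topological regularity of the map $\mu_q$.

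First I would record that the almost sure convergence $Q_{q,n}(X)\to\mu_q(\theta)$ is, in effect, already assembled in the display preceding the statement. By the decomposition (\ref{decomposition of Q_{q,n}(X)}), the remainder $\frac1n\sum_{k=0}^{n-1}R_{Q,q}(\theta,Z^{\theta}_k)$ tends to $0$ almost surely: this follows from Lemma \ref{asymptotic R_{Q,q}}, either directly through (\ref{pathwise cv of sum of R_{Q,q}(Z_k)}) with $\varepsilon=1$, or from the $L^p$ rate (\ref{rate of sum of R_{Q,q}(Z_k) in Lp}) combined with Lemma \ref{Borel-Cantelli}. At the same time $Q_{q,n}(Z^{\theta})\to\gamma_{Z^{\theta}}(q)$ almost surely, since $Z^{\theta}$ is stationary and ergodic and the ergodic theorem applies to the integrable functional $H_q(Z^{\theta}_0)$. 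Evaluating $\gamma_{Z^{\theta}}(q)$ via (\ref{limit a.s. of Q_{q,n}}) and the moment formula (\ref{first moment of Z_0}) identifies the limit as $\mu_q(\theta)$.

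Next I would verify that $\mu_q^{-1}$ is well defined and continuous at the point $\mu_q(\theta)$, which is what makes the estimator meaningful. Writing $\sigma^2(\theta):=E[(Z^{\theta}_0)^2]=H\Gamma(2H)\theta^{-2H}$, formula (\ref{first moment of Z_0}) shows that $\theta\mapsto\sigma^2(\theta)$ is smooth and strictly decreasing, hence a homeomorphism of $(0,\infty)$ onto $(0,\infty)$ with continuous inverse; composing with $x\mapsto c_q(x-1)^{q/2}$, where $c_q=\frac{q!}{(q/2)!\,2^{q/2}}$, shows that $\mu_q$ is continuous. For invertibility I would compute $\mu_q'(\theta)=c_q\frac q2(\sigma^2(\theta)-1)^{q/2-1}(\sigma^2)'(\theta)$, and since $(\sigma^2)'(\theta)\neq0$ everywhere, $\mu_q$ is strictly monotone, hence admits a continuous local inverse, on any interval on which $\sigma^2(\theta)\neq1$, which suffices in a neighbourhood of the true parameter. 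When $q/2$ is odd the power map is globally increasing, so $\mu_q$ is a genuine global bijection.

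Combining the two steps through the continuous mapping theorem then yields
\[
\widehat{\theta}_{q,n}=\mu_q^{-1}\!\left[Q_{q,n}(X)\right]\longrightarrow \mu_q^{-1}\!\left[\mu_q(\theta)\right]=\theta
\quad\text{almost surely.}
\]
I expect the substantive analytic content to sit entirely in the first step, namely the almost sure convergence of $Q_{q,n}(X)$, which is already in hand from the ergodicity of $Z^{\theta}$ and Lemma \ref{asymptotic R_{Q,q}}. The point genuinely requiring care is the invertibility of $\mu_q$ in the second step, in particular the non-monotonicity of the power map $x\mapsto x^{q/2}$ when $q/2$ is even; this is why I would phrase the continuity of $\mu_q^{-1}$ through a local inverse near $\theta$, equivalently by restricting to the range of parameters on which $\mu_q$ is one-to-one.
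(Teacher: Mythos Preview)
Your proposal is correct and follows essentially the same route as the paper: the paper derives the almost sure convergence $Q_{q,n}(X)\to\mu_q(\theta)$ from the decomposition (\ref{decomposition of Q_{q,n}(X)}), ergodicity of $Z^{\theta}$, and Lemma \ref{asymptotic R_{Q,q}}, then simply declares the consistency as a consequence of the definition $\widehat{\theta}_{q,n}=\mu_q^{-1}[Q_{q,n}(X)]$ without further argument. Your added discussion of the invertibility and continuity of $\mu_q^{-1}$---in particular the observation that $\mu_q$ can fail to be globally injective when $q/2$ is even---is a point the paper leaves implicit, so your version is in fact more careful than the original.
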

Combining  (\ref{decomposition of Q_{q,n}(X)}) and Theorem  \ref{CLT and NCLT of Q_{q,n}(Z-exp)}  we conclude the following result.
\begin{theorem}\label{asymptotic distribution of Q_{q,n}(FOU)}
 Denote $N\sim\mathcal{N}(0,1)$. If $H\in(0,\frac34]$, then there exists $C$ depending on $q$, $H$ and $\theta$ such that
\begin{eqnarray}
 d_W\left(\sqrt{\frac{n}{E\left[V_{q,n}^2(Z^{\theta})\right]}}\left(\mu_q\left(\widehat{\theta}_{q,n}\right)-\mu_q(\theta)\right),N\right)&\leq&  C \left\lbrace\begin{aligned} n^{-\frac14},\quad
\mbox{if } 0<H<\frac58\\  n^{-\frac14}\log^{\frac34}(n),\quad
\mbox{if } H=\frac58
\\  n^{2H-\frac32},\quad
\mbox{if } \frac58<H<\frac34\\  \log^{-\frac14}(n),\quad
\mbox{if } H=\frac34. \end{aligned}%
\right.
\end{eqnarray}
In particular,
\begin{itemize}
\item If $H\in(0,\frac34)$
\begin{eqnarray}\sqrt{n}\left(\mu_q\left(\widehat{\theta}_{q,n}\right)-\mu_q(\theta)\right)\overset{law}{\longrightarrow}
\mathcal{N}\left(0,\sigma_q^2(Z^{\theta})\right)
\end{eqnarray}where $\sigma_q^2(Z^{\theta})=\sum_{k=1}^{q/2}b^2_{q,2k}(Z^{\theta}) (2k)!\left(1+2\sum_{j=1}^{\infty}\frac{|r_{Z^{\theta}}(j)|^{2k}}{r_{Z^{\theta}}^2(0)}\right)=\lim_{n\rightarrow\infty}E\left[V_{q,n}^2(Z^{\theta})\right]$,\\
\item if $H=\frac34$
\begin{eqnarray}\sqrt{\frac{n}{\log(n)}}\left(\mu_q\left(\widehat{\theta}_{q,n}\right)-\mu_q(\theta)\right)\overset{law}{\longrightarrow}
\mathcal{N}\left(0,4b_{q,2}^2(Z^{\theta})\right)
\end{eqnarray} where in this case $E\left[V_{q,n}^2(Z^{\theta})\right]\sim 4b_{q,2}^2(Z^{\theta})\log(n)$.
\end{itemize}
In the case when  $H\in(\frac34,1)$, we have
\begin{eqnarray}\frac{1}{n^{2H-\frac32}}\left(\mu_q\left(\widehat{\theta}_{q,n}\right)-\mu_q(\theta)\right)
\overset{law}{\longrightarrow}\frac{b_{q,2}(Z^{\theta})}{\sqrt{D}}F_{\infty}
\end{eqnarray} where $F_{\infty}$ is defined in Theorem \ref{C-NC-LT}.
\end{theorem}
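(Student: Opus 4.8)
The plan is to reduce the whole statement to Theorem \ref{CLT and NCLT of Q_{q,n}(Z-exp)} and Lemma \ref{hypotheses FOU}, observing that the estimator is constructed precisely so that its rescaled error is one of the functionals already controlled. Indeed, since $\widehat{\theta}_{q,n}=\mu_q^{-1}[Q_{q,n}(X)]$ and $\mu_q$ is invertible, $\mu_q(\widehat{\theta}_{q,n})=Q_{q,n}(X)$ exactly, while $\mu_q(\theta)=\gamma_{Z^{\theta}}(q)$; hence $\mu_q(\widehat{\theta}_{q,n})-\mu_q(\theta)=Q_{q,n}(X)-\gamma_{Z^{\theta}}(q)$. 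By the decomposition (\ref{decomposition of X_theta}) we have $X_k=Z^{\theta}_k-e^{-\theta k}Z^{\theta}_0$, so $Q_{q,n}(X)=Q_{q,n}(Z^{\theta}_k-e^{-\theta k}Z^{\theta}_0)$, and via (\ref{decomposition of Q_{q,n}(X)}) the rescaled error $\sqrt{n/E[V_{q,n}^2(Z^{\theta})]}\,(\mu_q(\widehat{\theta}_{q,n})-\mu_q(\theta))$ is \emph{literally} the random variable bounded in Theorem \ref{CLT and NCLT of Q_{q,n}(Z-exp)} with $\lambda=\theta$ and $Z=Z^{\theta}$. Thus the entire theorem follows by feeding the covariance structure of $Z^{\theta}$ into the bounds (\ref{d_{W}(Q_{q,n}(Z-exp),N)}), (\ref{kappa_{4}(V_{2,n}(Y))}), (\ref{kappa_{4}(V_{2,n}(Y)) with log}) and the non-central limit (\ref{NCLT of Q_{q,n}(Z)}).

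The decisive input is the covariance decay. By Lemma \ref{hypotheses FOU}, relation (\ref{inner product of Z_theta}) gives $r_{Z^{\theta}}(k)\sim \frac{H(2H-1)}{\theta^2}|k|^{2H-2}$, so the normalized covariance satisfies $r_Y(k)=r_{Z^{\theta}}(k)/r_{Z^{\theta}}(0)\sim c\,|k|^{2H-2}$; i.e. we are in the polynomial regime of Theorem \ref{C-NC-LT} with decay exponent $\alpha=2-2H$. This splits the analysis into three cases: $\sum_k r_Y(k)^2<\infty\iff 2\alpha>1\iff H<\tfrac34$; the borderline $\alpha=\tfrac12\iff H=\tfrac34$; and $0<\alpha<\tfrac12\iff H>\tfrac34$. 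Inside the first case a finer threshold emerges because the cumulant estimate (\ref{kappa_{4}(V_{2,n}(Y))}) involves $\sum_{|k|<n}r_Y(k)^{4/3}$, which is summable iff $\tfrac43(2-2H)>1$, i.e. iff $H<\tfrac58$.

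Next I would assemble the rates for $H\le\tfrac34$ by evaluating the right-hand side of (\ref{d_{W}(Q_{q,n}(Z-exp),N)}). Writing $\kappa_4:=\kappa_4(V_{2,n}(Y))$, and noting that $E[V_{2,n}^2(Y)]$ and the normalizing variance $E[V_{q,n}^2(Z^{\theta})]$ are comparable, the dominant contribution is $\kappa_4^{1/4}$ (since $\sqrt{\kappa_4}\gg\kappa_4$ as $\kappa_4\to0$), while the remainder term $\frac{C}{E[V_{q,n}^2(Z^{\theta})]}\sqrt{E[V_{q,n}^2(Y)]/n}$ coming from Lemma \ref{asymptotic R_{Q,q}} is $O(n^{-1/2})$ and is dominated in every subcase (as $2H-\tfrac32>-\tfrac12$ for $H>\tfrac12$). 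Plugging $r_Y(k)\sim c|k|^{2H-2}$ into (\ref{kappa_{4}(V_{2,n}(Y))}) gives $\kappa_4\leq Cn^{-1}$ for $H<\tfrac58$ (rate $n^{-1/4}$), $\kappa_4\leq Cn^{-1}(\log n)^3$ for $H=\tfrac58$ (rate $n^{-1/4}\log^{3/4}(n)$), and $\kappa_4\leq Cn^{8H-6}$ for $\tfrac58<H<\tfrac34$ (rate $n^{2H-3/2}$); at $H=\tfrac34$ one uses (\ref{kappa_{4}(V_{2,n}(Y)) with log}) together with $E[V_{q,n}^2(Z^{\theta})]\sim4b_{q,2}^2(Z^{\theta})\log n$ to get the rate $\log^{-1/4}(n)$. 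This reproduces the four-line table.

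Finally, the limit laws follow by identifying the normalization regime by regime. For $H<\tfrac34$ every series $\sum_j r_Y(j)^{2k}$ converges, so $E[V_{q,n}^2(Z^{\theta})]\to\sigma_q^2(Z^{\theta})$; combining $d_W\to0$ with Slutsky yields $\sqrt n(\mu_q(\widehat{\theta}_{q,n})-\mu_q(\theta))\Rightarrow\mathcal N(0,\sigma_q^2(Z^{\theta}))$. For $H=\tfrac34$ the variance grows like $4b_{q,2}^2(Z^{\theta})\log n$, giving the $\sqrt{n/\log n}$ normalization and the limit $\mathcal N(0,4b_{q,2}^2(Z^{\theta}))$. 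For $H>\tfrac34$ I would invoke (\ref{NCLT of Q_{q,n}(Z)}) with $\alpha=2-2H$: since $\mu_q(\widehat{\theta}_{q,n})-\mu_q(\theta)=n^{-1/2}V_{Q_{q,n}}(Z^{\theta})+n^{-1}\sum_{k}R_{Q,q}(\theta,Z^{\theta}_k)$ and $V_{Q_{q,n}}(Z^{\theta})/n^{1/2-\alpha}\Rightarrow\frac{b_{q,2}(Z^{\theta})}{\sqrt D}F_{\infty}$, the error is of order $n^{-\alpha}=n^{2H-2}$, whence $n^{2-2H}(\mu_q(\widehat{\theta}_{q,n})-\mu_q(\theta))\Rightarrow\frac{b_{q,2}(Z^{\theta})}{\sqrt D}F_{\infty}$; equivalently, since $E[V_{q,n}^2(Z^{\theta})]\sim Cn^{4H-3}$, the variance-normalized quantity $\sqrt{n/E[V_{q,n}^2(Z^{\theta})]}\,(\mu_q(\widehat{\theta}_{q,n})-\mu_q(\theta))$ converges to a multiple of $F_{\infty}$, the remainder being negligible by (\ref{rate of sum of R_{Q,q}(Z_k) in Lp}). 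The main obstacle is bookkeeping in the first regime: one must confirm that the $\kappa_4^{1/4}$ term genuinely dominates the remainder uniformly, locate the threshold $H=\tfrac58$ correctly from the $\tfrac43$-power sum, and verify the three variance asymptotics (constant, $\log n$, $n^{4H-3}$) that fix each normalization.
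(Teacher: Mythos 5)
Your proposal is correct and takes essentially the same route as the paper, whose proof of this theorem is the one-line instruction to combine the decomposition (\ref{decomposition of Q_{q,n}(X)}) with Theorem \ref{CLT and NCLT of Q_{q,n}(Z-exp)}: like the paper, you use $\mu_q(\widehat{\theta}_{q,n})=Q_{q,n}(X)$, the covariance asymptotics of Lemma \ref{hypotheses FOU} giving $r_Y(k)\sim c|k|^{-(2-2H)}$, and then the estimates (\ref{d_{W}(Q_{q,n}(Z-exp),N)}), (\ref{kappa_{4}(V_{2,n}(Y))}), (\ref{kappa_{4}(V_{2,n}(Y)) with log}) and (\ref{NCLT of Q_{q,n}(Z)}), and your threshold bookkeeping (the $H=\frac58$ barrier from the $\frac43$-power sum, $H=\frac34$ from square-summability, the three variance regimes) correctly reproduces the four-line rate table that the paper leaves implicit. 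One remark: in the non-central regime your normalization $n^{2-2H}\left(\mu_q(\widehat{\theta}_{q,n})-\mu_q(\theta)\right)$ is the internally consistent one, since $\mu_q(\widehat{\theta}_{q,n})-\mu_q(\theta)=n^{-1/2}V_{Q_{q,n}}(Z^{\theta})+O(n^{-1})$ and it is $V_{Q_{q,n}}(Z^{\theta})/n^{2H-\frac32}$ that converges in law; the exponent $2H-\frac32$ displayed in the theorem omits this factor $\sqrt{n}$, so on that point you have corrected an apparent typo in the statement rather than deviated from the paper's argument.
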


Thus we deduce  the asymptotic distribution of $\widehat{\theta}_{q,n}$.
\begin{theorem}\label{asymptotic distrivution of widehat{theta}_{q,n}}
If $H\in(0, \frac{3}{4}]$, then
\begin{eqnarray}\sqrt{\frac{n}{E\left[V_{q,n}^2(Z^{\theta})\right]}}\left(\widehat{\theta}_{q,n}-\theta\right)\overset{law}{\longrightarrow}\mathcal{ N}\left(0, \left(\mu'_q(\theta)\right)^{-2}\right).
\end{eqnarray}
If  $H\in(\frac34,1)$, then
\begin{eqnarray}\frac{1}{n^{2H-\frac32}}\left( \widehat{\theta}_{q,n}  - \theta \right)
\overset{law}{\longrightarrow}\frac{b_{q,2}}{\mu'_q(\theta)\sqrt{D}}F_{\infty}
\end{eqnarray}
\end{theorem}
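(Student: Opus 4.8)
The plan is to deduce the asymptotic law of $\widehat{\theta}_{q,n}-\theta$ from the one already obtained for $\mu_q(\widehat{\theta}_{q,n})-\mu_q(\theta)$ in Theorem \ref{asymptotic distribution of Q_{q,n}(FOU)}, by a delta-method argument. The key starting observation is that, by the very definition $\widehat{\theta}_{q,n}=\mu_q^{-1}\left[Q_{q,n}(X)\right]$, one has the exact identity $\mu_q(\widehat{\theta}_{q,n})=Q_{q,n}(X)$, valid for $n$ large enough that $Q_{q,n}(X)$ falls in the range of $\mu_q$ (which holds eventually by the strong consistency $Q_{q,n}(X)\to\mu_q(\theta)$). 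Thus the quantity whose fluctuations are controlled in the previous theorem is literally $Q_{q,n}(X)-\mu_q(\theta)$.

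First I would apply the mean value theorem to the smooth map $\mu_q$ on $(0,\infty)$: there exists $\xi_n$ lying between $\theta$ and $\widehat{\theta}_{q,n}$ such that
\[\mu_q(\widehat{\theta}_{q,n})-\mu_q(\theta)=\mu_q'(\xi_n)\left(\widehat{\theta}_{q,n}-\theta\right).\]
Next I would invoke the almost sure convergence $\widehat{\theta}_{q,n}\to\theta$ established in (\ref{convergence a.s. of widehat{theta}_{q,n}}), which forces $\xi_n\to\theta$ almost surely, and hence $\mu_q'(\xi_n)\to\mu_q'(\theta)$ almost surely by continuity of $\mu_q'$. A direct differentiation of $\mu_q(\theta)=\frac{q!}{(\frac{q}{2})!\,2^{q/2}}\left(H\Gamma(2H)\theta^{-2H}-1\right)^{q/2}$ gives
\[\mu_q'(\theta)=-qH^2\Gamma(2H)\frac{q!}{(\frac{q}{2})!\,2^{q/2}}\theta^{-2H-1}\left(H\Gamma(2H)\theta^{-2H}-1\right)^{q/2-1},\]
which is finite and nonzero precisely when $E\left[(Z_0^{\theta})^2\right]=H\Gamma(2H)\theta^{-2H}\neq1$, the condition already implicit in the local invertibility of $\mu_q$ near $\theta$.

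Then I would rewrite
\[\widehat{\theta}_{q,n}-\theta=\frac{\mu_q(\widehat{\theta}_{q,n})-\mu_q(\theta)}{\mu_q'(\xi_n)}\]
and multiply by the appropriate normalization. In the case $H\in(0,\frac34]$, Theorem \ref{asymptotic distribution of Q_{q,n}(FOU)} gives $\sqrt{n/E\left[V_{q,n}^2(Z^{\theta})\right]}\left(\mu_q(\widehat{\theta}_{q,n})-\mu_q(\theta)\right)\overset{law}{\longrightarrow}N$; dividing by $\mu_q'(\xi_n)\to\mu_q'(\theta)$ and applying Slutsky's theorem yields the stated convergence to $\mathcal{N}\left(0,(\mu_q'(\theta))^{-2}\right)$. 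In the case $H\in(\frac34,1)$, the same theorem gives $n^{\frac32-2H}\left(\mu_q(\widehat{\theta}_{q,n})-\mu_q(\theta)\right)\overset{law}{\longrightarrow}\frac{b_{q,2}}{\sqrt{D}}F_{\infty}$, and Slutsky again delivers the limit $\frac{b_{q,2}}{\mu_q'(\theta)\sqrt{D}}F_{\infty}$.

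The argument is essentially mechanical once the delta-method machinery is in place, so the only step requiring genuine care is the regularity of $\mu_q$ near $\theta$, namely that $\mu_q'$ is continuous and does not vanish at $\theta$. For $q=2$ this is automatic, since $\mu_2'(\theta)=-2H^2\Gamma(2H)\theta^{-2H-1}\neq0$ for every $\theta>0$; for larger even $q$ one must additionally exclude the degenerate value $E\left[(Z_0^{\theta})^2\right]=1$, which is exactly what guarantees local invertibility of $\mu_q$ and hence the well-posedness of the estimator. The non-Gaussian nature of the limit $F_{\infty}$ in the second regime causes no difficulty, as Slutsky's theorem requires only convergence in distribution together with the almost sure convergence $\mu_q'(\xi_n)\to\mu_q'(\theta)$.
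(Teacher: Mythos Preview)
Your proof is correct and follows exactly the paper's approach: the mean value theorem gives $\mu_q(\widehat{\theta}_{q,n})-\mu_q(\theta)=\mu_q'(\xi_{q,n})(\widehat{\theta}_{q,n}-\theta)$ with $\xi_{q,n}$ between $\theta$ and $\widehat{\theta}_{q,n}$, and one then combines with Theorem \ref{asymptotic distribution of Q_{q,n}(FOU)}. Your write-up is in fact more careful than the paper's, since you make explicit the use of strong consistency, Slutsky's theorem, and the non-degeneracy condition $\mu_q'(\theta)\neq0$ (equivalently $E[(Z_0^{\theta})^2]\neq1$ when $q>2$), all of which the paper leaves implicit.
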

\begin{proof} We can write
\begin{eqnarray*}\sqrt{n}\left(\mu_q(\widehat{\theta}_{q,n})-\mu_q(\theta)\right)
&=&\mu'_q(\xi_{q,n})\sqrt{n}\left(\widehat{\theta}_{q,n}-\theta\right)
\end{eqnarray*}
where $\xi_{q,n}$ is a random variable between $\theta$ and $\widehat{\theta}_{q,n}$.\\
Combining this with Theorem \ref{asymptotic distribution of Q_{q,n}(FOU)} we obtain the desired conclusion.
\end{proof}

\vspace{.5cm}
{\bf Quadratic case}:
Now we will discuss the particular case when $q=2$.\\
Define
\begin{eqnarray}\label{expresion of (Y{theta}) depends on theta} Y^{\theta}_t:=\frac{Z_t^{\theta}}{\sqrt{\gamma_{Z^{\theta}}(2)}}=\frac{Z_t^{\theta}}{\sqrt{E[(Z^{\theta}_{0})^{2}]}}
=\frac{Z_t^{\theta}}{\sqrt{H\Gamma(2H)\theta^{-2H}}}.
\end{eqnarray}

Combining  (\ref{decomposition of Q_{q,n}(X)}) and Theorem  \ref{CLT and NCLT of Q_{2,n}(Z-exp)}  we conclude the following result.
\begin{theorem}If $H\in\left(0,\frac34\right]$, then
\begin{equation}  \label{d_W(F_{q,n}(X),N) for theta}
d_W\left(\frac{H\Gamma(2H)}{\sqrt{E\left[V_{2,n}^2(Z^{\theta})\right]}}\sqrt{n}\left(\widehat{\theta}_{2,n}^{-2H}-\theta^{-2H}\right),N\right)\leq C\times\left\lbrace\begin{aligned} \frac{1}{\sqrt{n}},\quad
\mbox{if } 0<H<\frac23\\  \frac{\log^2(n)}{\sqrt{n}},\quad
\mbox{if } H=\frac23\\  n^{6H-\frac92},\quad
\mbox{if } \frac23<H<\frac34\\  \frac{1}{(\log(n))^{1/4}},\quad
\mbox{if } H=\frac34. \end{aligned}%
\right.
\end{equation}
As consequence, for every $H\in\left(0,\frac34\right)$
\begin{eqnarray}\sqrt{n}\left(\widehat{\theta}_{2,n}-\theta\right)
\overset{\rm \mathcal{L}}{\longrightarrow}  \mathcal{ N}\left(0,\frac{\theta^{ 4H+2}\sigma_2^2(Z^{\theta})}{\left(H\Gamma(2H+1)\right)^2} \right) \label{convergence
in law theta}\end{eqnarray}
and if $H=\frac34$
\begin{eqnarray}\sqrt{\frac{n}{\log(n)}}\left(\widehat{\theta}_{2,n}-\theta\right)
\overset{\rm \mathcal{L}}{\longrightarrow}  \mathcal{ N}\left(0,\frac{H}{\theta}\right) \label{convergence
in law theta case log}\end{eqnarray}where in this case $E\left[V_{2,n}^2(Z^{\theta})\right]\sim \left(\Gamma(2H+1)\theta^{-2H}\right)^2\log(n)$.\\
If $\frac34<H<1$,
\begin{equation}\label{d_W(F_{q,n}(X),F_infty)}
d_W\left(\frac{H\Gamma(2H)}{\sqrt{E\left[V_{2,n}^2(Z^{\theta})\right]}}\sqrt{n}\left(\widehat{\theta}_{2,n}^{-2H}-\theta^{-2H}\right),
\frac12\sqrt{\frac{4H-3}{D}}F_{\infty}\right)\leq \frac{C}{\sqrt{\log(n)}}
\end{equation}where $E\left[V_{2,n}^2(Z^{\theta})\right] \sim \frac{n^{4H-3}}{4H-3}\left(\Gamma(2H+1)\theta^{-2H}\right)^{2}.$\\
As consequence
\begin{eqnarray}\sqrt{n}\left(\theta-\widehat{\theta}_{2,n}\right)
\overset{\rm {law}}{\longrightarrow}  \frac{\theta}{2H}F_{\infty}. \label{convergence
in law thetaRosenblatt}\end{eqnarray}
\end{theorem}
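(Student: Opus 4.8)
The plan is to identify the normalized quantity inside every distance in the statement with the object controlled by Theorem~\ref{CLT and NCLT of Q_{2,n}(Z-exp)} applied to $Z=Z^{\theta}$ and $\lambda=\theta$, and then to transfer the resulting limit laws from $\widehat{\theta}_{2,n}^{-2H}$ to $\widehat{\theta}_{2,n}$ by a mean value argument and Slutsky's theorem. First I would record the algebraic identity underlying the reduction. For $q=2$, formulas (\ref{limit a.s. of Q_{q,n}}) and (\ref{first moment of Z_0}) give $\mu_2(\theta)=\gamma_{Z^{\theta}}(2)=E[(Z^{\theta}_0)^2]-1=H\Gamma(2H)\theta^{-2H}-1$, so the defining relation $\mu_2(\widehat{\theta}_{2,n})=Q_{2,n}(X)$ yields the exact equality
\begin{equation*}
H\Gamma(2H)\left(\widehat{\theta}_{2,n}^{-2H}-\theta^{-2H}\right)=Q_{2,n}(X)-\gamma_{Z^{\theta}}(2).
\end{equation*}
Since $X_k=Z^{\theta}_k-e^{-\theta k}Z^{\theta}_0$ by (\ref{decomposition of X_theta}), we have $Q_{2,n}(X)=Q_{2,n}(Z^{\theta}_k-e^{-\theta k}Z^{\theta}_0)$, and hence the left argument of each distance equals $\sqrt{n/E[V_{2,n}^2(Z^{\theta})]}\,(Q_{2,n}(Z^{\theta}_k-e^{-\theta k}Z^{\theta}_0)-\gamma_{Z^{\theta}}(2))$, which is precisely the quantity bounded in Theorem~\ref{CLT and NCLT of Q_{2,n}(Z-exp)} with $\lambda=\theta$.

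Next I would determine which branch of that theorem applies. By Lemma~\ref{hypotheses FOU}, $r_{Z^{\theta}}(k)\sim \frac{H(2H-1)}{\theta^2}|k|^{2H-2}$, so $r_{Y^{\theta}}(k)=r_{Z^{\theta}}(k)/r_{Z^{\theta}}(0)\sim c\,|k|^{2H-2}$ for a constant $c$, and the relevant decay exponent is $\alpha=2-2H$. Accordingly $\sum_k|r_{Y^{\theta}}(k)|^2<\infty$ precisely when $H<\tfrac34$, the boundary $|r_{Y^{\theta}}(k)|\sim|c|\,|k|^{-1/2}$ occurs at $H=\tfrac34$, and $0<\alpha<\tfrac12$ corresponds to $\tfrac34<H<1$. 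For $H<\tfrac34$ the bound (\ref{d_{W}(F_{2,n}(Z-exp),N)}) is governed by $\sum_{|k|<n}|r_{Y^{\theta}}(k)|^{3/2}\sim\sum_{|k|<n}|k|^{3H-3}$, which is $O(1)$ for $H<\tfrac23$, of order $\log n$ for $H=\tfrac23$, and of order $n^{3H-2}$ for $\tfrac23<H<\tfrac34$; inserting these into $\tfrac{C}{\sqrt n}[1+(\sum_{|k|<n}|r_{Y^{\theta}}(k)|^{3/2})^2]$ produces in turn the rates $n^{-1/2}$, $n^{-1/2}\log^2 n$ and $n^{6H-9/2}$ of (\ref{d_W(F_{q,n}(X),N) for theta}). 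The endpoint $H=\tfrac34$ is handled by (\ref{d_{W}(F_{2,n}(Z-exp),N) case log}), giving the rate $(\log n)^{-1/4}$, while for $\tfrac34<H<1$ estimate (\ref{d_{W}(F_{2,n}(Z-exp),F_{infty})}) with $1-2\alpha=4H-3$ yields (\ref{d_W(F_{q,n}(X),F_infty)}), the limit being $\tfrac12\sqrt{(4H-3)/D}\,F_{\infty}$.

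It remains to pass from $\widehat{\theta}_{2,n}^{-2H}$ to $\widehat{\theta}_{2,n}$. Writing $Q_{2,n}(X)-\gamma_{Z^{\theta}}(2)=\mu_2(\widehat{\theta}_{2,n})-\mu_2(\theta)=\mu_2'(\xi_n)(\widehat{\theta}_{2,n}-\theta)$ by the mean value theorem, with $\xi_n$ between $\widehat{\theta}_{2,n}$ and $\theta$, and invoking the almost sure consistency $\widehat{\theta}_{2,n}\to\theta$ established in (\ref{convergence a.s. of widehat{theta}_{q,n}}), we get $\mu_2'(\xi_n)\to\mu_2'(\theta)=-H\Gamma(2H+1)\theta^{-2H-1}$ almost surely. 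Slutsky's theorem then converts the limit laws for the normalized $\mu_2$-increment into those for $\widehat{\theta}_{2,n}-\theta$: for $H<\tfrac34$, since $E[V_{2,n}^2(Z^{\theta})]\to\sigma_2^2(Z^{\theta})$, one obtains the Gaussian limit (\ref{convergence in law theta}) with variance $\sigma_2^2(Z^{\theta})/(\mu_2'(\theta))^2=\theta^{4H+2}\sigma_2^2(Z^{\theta})/(H\Gamma(2H+1))^2$; for $H=\tfrac34$ the logarithmic asymptotics $E[V_{2,n}^2(Z^{\theta})]\sim(\Gamma(2H+1)\theta^{-2H})^2\log n$ give (\ref{convergence in law theta case log}); and for $\tfrac34<H<1$ the same substitution produces the Rosenblatt limit (\ref{convergence in law thetaRosenblatt}).

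The main delicacy is twofold. First, Theorem~\ref{CLT and NCLT of Q_{2,n}(Z-exp)} is phrased for exact power covariances, whereas here only the asymptotic equivalence $r_{Y^{\theta}}(k)\sim c\,|k|^{2H-2}$ is available; I would need to verify that its bounds, being expressed solely through the sums $\sum_{|k|<n}|r_{Y^{\theta}}(k)|^p$ and the cumulant $\kappa_4(V_{2,n}(Y^{\theta}))$, depend only on the leading-order asymptotics of these sums, so that the equivalence suffices to reproduce the exact rates. Second, tracking the multiplicative constants through the successive normalizations — threading $r_{Z^{\theta}}(0)=H\Gamma(2H)\theta^{-2H}$, the tail constant $H(2H-1)/\theta^2$, and $\mu_2'(\theta)$ — is what pins down the precise limiting variances $\theta^{4H+2}\sigma_2^2(Z^{\theta})/(H\Gamma(2H+1))^2$ and $H/\theta$ together with the Rosenblatt scale, and is the most error-prone part of the computation.
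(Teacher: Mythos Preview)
Your approach is correct and coincides with the paper's: the paper simply states that the theorem follows by ``combining (\ref{decomposition of Q_{q,n}(X)}) and Theorem~\ref{CLT and NCLT of Q_{2,n}(Z-exp)}'', and you have faithfully unpacked that sentence, including the case analysis on $\sum_{|k|<n}|r_{Y^{\theta}}(k)|^{3/2}$ and the mean-value/Slutsky transfer to $\widehat{\theta}_{2,n}$. Your flagged delicacy about Theorem~\ref{CLT and NCLT of Q_{2,n}(Z-exp)} assuming exact power covariances whereas Lemma~\ref{hypotheses FOU} only gives the asymptotic $r_{Y^{\theta}}(k)\sim c|k|^{2H-2}$ is a genuine point the paper leaves implicit; your resolution (the bounds depend only on the asymptotics of the relevant sums and cumulants) is the right one.
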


\subsection{OU driven by fractional Ornstein-Uhlenbeck process }
In this section suppose that $%
X=\left\{X_t, t\geq0\right\}$ is an Ornstein-Uhlenbeck process
driven by fractional Ornstein-Uhlenbeck process $V=\left\{V_t,
t\geq0\right\}$ given by the following linear stochastic
differential equations
\begin{equation}  \label{OUFOU}
\left\lbrace\begin{aligned} X_0=0; \quad dX_t=-\theta
X_tdt+dV_t,\quad
t\geq0\\ V_0=0;\quad dV_t=-\rho V_tdt+dB^H_t,\quad t\geq0, \end{aligned}%
\right.
\end{equation} where
$B^H=\left\{B^H_t, t\geq0\right\}$ is a fractional Brownian motion
of Hurst index $H\in(0,1)$, whereas $\theta>0$ and
$\rho>0$ are considered as unknown parameters such that
$\theta\neq\rho$. \\
For \cite{EEV} we have the following results related to $X_t$:
\begin{equation}\label{representationX}
X_t=\frac{\rho}{\rho-\theta}X^{\rho}_t+\frac{\theta}{\theta-\rho}X^\theta_t
\end{equation}
 where for
$m>0$
\begin{eqnarray}\label{expression of X_m}
X^{m}_t=\int_{0}^{t}e^{-m(t-s)} dB^H_s.
\end{eqnarray} On the other hand, we can also write the system
(\ref{OUFOU}) as follows
\begin{equation*}
dX_t =-\left( \theta +\rho \right) X_t dt-\rho \theta \Sigma_t
dt+dB^H_t . \label{SIDE}
\end{equation*}where for $0\leq t \leq T$
\begin{eqnarray}
\Sigma_t= \int_{0}^{t}X_s ds =\frac{V_t-X_t}{\theta}
=\frac{X^{\theta}_t-X^{\rho}_t}{\rho-\theta}  \label{expression
Sigma}
\end{eqnarray}
We can also write
\begin{eqnarray}\label{decomposition of X_m}
X^{m}_t=Z^m_t-e^{-mt}Z^m_0
\end{eqnarray}
where
\begin{eqnarray}\label{expression of Z_m}
Z^m_t=\int_{-\infty}^{t}e^{-m(t-s)} dB^H_s.
\end{eqnarray}
As consequence
\begin{eqnarray}\label{representationX with Z}
X_t&=&\frac{\rho}{\rho-\theta}Z^{\rho}_t+\frac{\theta}{\theta-\rho}Z^\theta_t-\left(\frac{\rho e^{-\rho t}}{\rho-\theta}Z^{\rho}_0+\frac{\theta e^{-\theta t}}{\theta-\rho}Z^\theta_0\right)\nonumber\\&:=&Z^{\theta,\rho}_t+\frac{\rho}{\rho-\theta}R(\rho,Z^{\rho})+\frac{\theta }{\theta-\rho}R(\theta,Z^\theta)
\end{eqnarray}
and
\begin{eqnarray}\label{expression
Sigma with Z}
\Sigma_t&=&\frac{Z^{\theta}_t-Z^{\rho}_t}{\rho-\theta}- \frac{e^{-\theta t}Z^{\theta}_0-e^{-\rho t}Z^{\rho}_0}{\rho-\theta}\nonumber  \\&:=&
\Sigma^{\theta,\rho}_t+\frac{1}{\rho-\theta}\left(R(\theta,Z^\theta)-R(\rho,Z^\rho)\right).
\end{eqnarray}
On the other hand, the process
\begin{eqnarray}\label{ergodic stationary Y(theta,rho)}
\left(Z^m_t,Z^{m'}_t\right)
\end{eqnarray} is an ergodic stationary Gaussian process.
\begin{lemma}\label{hypotheses OUFOU} Let $H\in(0,\frac12)\cup(\frac12,1)$, $m, m'>0$ and let $Z^m$ be the process defined
in (\ref{expression of Z_m}). Then,
\begin{eqnarray}\label{first moment of Z_0}
\lambda(m,m'):=E\left[Z^{m}_0Z^{m'}_0\right]
=\frac{H\Gamma(2H)}{m+m'}\left(m^{1-2H}+{(m')}^{1-2H}\right)
\end{eqnarray}
and for   large $|t|$
\begin{eqnarray}\label{inner product of Z} E\left[Z^{m}_0Z^{m'}_t\right]\sim\frac{H(2H-1)}{mm'}|t|^{2H-2}.
\end{eqnarray}
\end{lemma}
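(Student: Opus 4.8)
The plan is to reduce both identities to explicit double integrals against the fractional covariance kernel and to evaluate them; since $(Z^m,Z^{m'})$ is jointly centered Gaussian, everything is determined by second moments. I would first treat the regime $H\in(\frac12,1)$, where $2H-2\in(-1,0)$ so the kernel $H(2H-1)|t-s|^{2H-2}$ is locally integrable and the covariance formula of Lemma \ref{inner product for OUFOU} extends to the common half-line $(-\infty,0]$. With $Z^m_0=\int_{-\infty}^0 e^{ms}\,dB^H_s$ and $Z^{m'}_0=\int_{-\infty}^0 e^{m's}\,dB^H_s$, this gives
\[
\lambda(m,m')=H(2H-1)\int_{-\infty}^0\!\!\int_{-\infty}^0 e^{ms+m't}|t-s|^{2H-2}\,ds\,dt.
\]

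Next I would evaluate this integral by splitting the square into $\{s<t\}$ and $\{s>t\}$. On $\{s<t\}$, the substitution $u=t-s$ followed by integration in $s$ produces the Gamma integral $\int_0^\infty u^{2H-2}e^{-mu}\,du=\Gamma(2H-1)\,m^{1-2H}$, and the remaining factor $\int_{-\infty}^0 e^{(m+m')t}\,dt=(m+m')^{-1}$ yields the term in $m^{1-2H}$; the symmetric region gives the term in $(m')^{1-2H}$. Hence
\[
\lambda(m,m')=\frac{H(2H-1)\Gamma(2H-1)}{m+m'}\bigl(m^{1-2H}+(m')^{1-2H}\bigr),
\]
and the functional equation $(2H-1)\Gamma(2H-1)=\Gamma(2H)$ turns this into (\ref{first moment of Z_0}); setting $m=m'$ recovers $H\Gamma(2H)\,m^{-2H}$, in agreement with Lemma \ref{hypotheses FOU}.

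For the tail estimate I would use the same representation with $Z^{m'}_t=\int_{-\infty}^t e^{-m'(t-u)}\,dB^H_u$, and the changes of variables $w=-s\geq0$, $v=t-u\geq0$ to obtain, for $t>0$,
\[
E[Z^m_0 Z^{m'}_t]=H(2H-1)\int_0^\infty\!\!\int_0^\infty e^{-mw-m'v}\,|t+w-v|^{2H-2}\,dw\,dv.
\]
Multiplying by $t^{2-2H}$ replaces the kernel by $|1+w/t-v/t|^{2H-2}\to1$ pointwise; the weights $e^{-mw-m'v}$ are integrable and, since the only region where the kernel is large is $v\approx t$, where $e^{-m'v}$ decays like $e^{-m't}$ and beats the polynomial factor $t^{2-2H}$, dominated convergence (after a routine splitting at $v=(t+w)/2$) gives $t^{2-2H}E[Z^m_0Z^{m'}_t]\to H(2H-1)/(mm')$, which is (\ref{inner product of Z}) for $t\to+\infty$; the case $t\to-\infty$ follows by interchanging the roles of $m$ and $m'$, producing the same constant and the $|t|^{2H-2}$ form.

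The main obstacle is the regime $H\in(0,\frac12)$, where $2H-2<-1$ makes the kernel non-integrable on the diagonal, so the displayed integrals diverge and the kernel representation is no longer valid. Here I would instead work in the Hilbert space $\Hf$ associated with $B^H$, writing $Z^m_0$ and $Z^{m'}_t$ as Wiener integrals through the Mandelbrot--Van Ness representation $B^H_t=c_H\int_\R\bigl[(t-s)_+^{H-1/2}-(-s)_+^{H-1/2}\bigr]\,dW_s$, so that $\lambda(m,m')$ and $E[Z^m_0Z^{m'}_t]$ become ordinary $L^2(\R)$ inner products with no singularity and the same elementary integrals can be carried out. Alternatively, since both sides of (\ref{first moment of Z_0}) and (\ref{inner product of Z}) are analytic in $H$ on a neighbourhood of $(0,1)\setminus\{\frac12\}$ and coincide for $H>\frac12$, analytic continuation extends them to $H<\frac12$. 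This is precisely the content of \cite[Theorem 2.3]{CKM}, which I would cite to close the $H<\frac12$ case.
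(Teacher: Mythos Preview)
Your approach is correct and will produce both formulas: the computations for $H>\tfrac12$ are clean, and for $H<\tfrac12$ the appeal to \cite[Theorem~2.3]{CKM} or analytic continuation is legitimate. The paper, however, avoids the case distinction entirely by a different device. For (\ref{first moment of Z_0}) it never invokes the kernel $H(2H-1)|t-s|^{2H-2}$: instead, via \cite[Proposition~A.1]{CKM} (an integration by parts), it writes
\[
E\bigl[Z^m_0Z^{m'}_0\bigr]=mm'\int_{-\infty}^0\!\!\int_{-\infty}^0 e^{mu+m'v}\,E\bigl[B^H_uB^H_v\bigr]\,du\,dv
\]
and substitutes the explicit fBm covariance $\tfrac12(|u|^{2H}+|v|^{2H}-|u-v|^{2H})$; this is valid for every $H\in(0,1)$ and the Gamma integrals fall out directly. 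For (\ref{inner product of Z}) it splits $\int_{-\infty}^t$ into $\int_{-\infty}^{\varepsilon t}$ (a term $A=O(e^{-m't})$, again bounded through the direct covariance) and $\int_{\varepsilon t}^t$ (a term $B$ for which the intervals $(-\infty,0]$ and $[\varepsilon t,t]$ are disjoint, so Lemma~\ref{inner product for OUFOU} applies for all $H\neq\tfrac12$); $B$ is then expanded by two integrations by parts rather than by your dominated-convergence argument. What you gain is a shorter computation when $H>\tfrac12$; what the paper gains is a single proof covering both regimes without any analytic continuation or Mandelbrot--Van~Ness detour.
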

This implies that for $H\in(0,\frac12)\cup(\frac12,1)$
\begin{equation*}
\eta_X(\theta,\rho):=E\left[\left(Z_0^{\theta,\rho}\right)^2\right]=\frac{H\Gamma(2H)}{\rho^2-\theta^2}[\rho^{2-2H}-%
\theta^{2-2H}],
\end{equation*}
\begin{equation*}
\eta_{\Sigma}(\theta,\rho):=E\left[\left(\Sigma_0^{\theta,\rho}\right)^2\right]=\frac{H\Gamma(2H)}{\rho^2-\theta^2}%
[\theta^{-2H}-\rho^{-2H}],
\end{equation*}
and
\begin{equation*}
 E\left[\left(Z_0^{\theta,\rho}\Sigma_0^{\theta,\rho}\right) \right]=0.
\end{equation*}
\begin{proof}By using \cite[Proposition A.1]{CKM}, we can write
\begin{eqnarray*}
E\left[Z^{m}_0Z^{m'}_0\right]&=&mm'\int_{-\infty}^{0}\int_{-\infty}^{0}e^{mu}  e^{m'v} E\left(B^H_uB^H_v\right)\ dudv\\
&=&\frac{mm'}{2}\int_{-\infty}^{0}\int_{-\infty}^{0}e^{mu}  e^{m'v}  \left( (-u)^{2H} + (-v)^{2H}-|v-u|^{2H}\right)\ dudv
\\
&=&\frac{mm'}{2}\int_{0}^{\infty}\int_{0}^{\infty}e^{mu}  e^{m'v}  \left( u^{2H} +v^{2H}-|v-u|^{2H}\right)\ dudv
\\
&=&\frac{\Gamma(2H+1)}{2(m+m')}\left(m^{1-2H}+{(m')}^{1-2H}\right).
\end{eqnarray*}
Thus the estimate (\ref{first moment of Z_0}) is proved.
\\Now, let $0<\varepsilon<1$
\begin{eqnarray*}
E\left(Z^m_0Z^{m'}_t\right)&=&e^{-m't}E\left(\int_{-\infty}^{0}e^{mu} dB^H_u\int_{-\infty}^{t}e^{m'v} dB^H_v\right)\\&=&
e^{-m't}E\left(\int_{-\infty}^{0}e^{mu} dB^H_u\int_{-\infty}^{\varepsilon t}e^{m'v} dB^H_v\right)+e^{-m't}E\left(\int_{-\infty}^{0}e^{mu} dB^H_u\int_{\varepsilon t}^{t}e^{m'v} dB^H_v\right)
\\&:=&A+B
\end{eqnarray*}
where, using \cite[Proposition A.1]{CKM} it is easy to see that
\begin{eqnarray*}
|A|=O\left(e^{-m't}\right).
\end{eqnarray*}
On the other hand, by Lemma \ref{inner product for OUFOU} and integrations by part
\begin{eqnarray*}
 B&=& H(2H-1)e^{-m't} \int_{-\infty}^{0}du\ e^{mu} \int_{\varepsilon t}^{t}dv\ e^{m'v} (v-u)^{2H-2}
\\&=& H(2H-1)e^{-m't} \int_{-\infty}^{0}du\ e^{mu} \int_{\varepsilon t-u}^{t-u}dz\ e^{m'(u+z)}z^{2H-2}
\\&=& H(2H-1)e^{-m't} \int_{\varepsilon t}^{\infty}dz\ e^{m'z}z^{2H-2}\int_{\varepsilon t-z}^{0\wedge(t-z)}du\ e^{(m+m')u}
\\&=&\frac{H(2H-1)}{m+m'}\left(\int_{t}^{\infty} e^{-m(z-t)}z^{2H-2} dz+  \int_{\varepsilon t}^{t} e^{-m'(t-z)}z^{2H-2}dz+e^{-m't(1-\varepsilon)} \int_{\varepsilon t}^{\infty} e^{-m(z-\varepsilon t)}z^{2H-2}dz\right)
\\&=&\frac{H(2H-1)}{(m+m')}\left(\frac{t^{2H-2}}{m}+\frac{2H-2}{m}\int_{t}^{\infty} e^{-m(z-t)}z^{2H-3} dz+ \frac{t^{2H-2}}{m'}\right.\\&&\left. - \frac{(\varepsilon t)^{2H-2}}{m'}e^{-m'(1-\varepsilon)t}-\frac{2H-2}{m'}\int_{\varepsilon t}^{t} e^{-m'(t-z)}z^{2H-3}dz+ e^{-m't(1-\varepsilon)} \int_{\varepsilon t}^{\infty} e^{-m(z-\varepsilon t)}z^{2H-2}dz\right)
\\&=&\frac{H(2H-1)}{mm'}t^{2H-2}+ o\left(t^{2H-2}\right),
\end{eqnarray*}
the last inequality comes from the fact that
\begin{eqnarray*}
\int_{t}^{\infty} e^{-m(z-t)}z^{2H-3} dz&\leq& t^{-1}\int_0^{\infty} e^{-my}dy
\\&&\rightarrow 0, \quad \mbox{ as } t\rightarrow\infty,
\end{eqnarray*}
\begin{eqnarray*}
 t^{2-2H}\int_{\varepsilon t}^{t} e^{-m'(t-z)}z^{2H-3}dz&\leq& \varepsilon^{2H-3}t^{-1}\int_{\varepsilon t}^{t} e^{-m'(t-z)}dz\\
 &=& \varepsilon^{2H-3}t^{-1}\int_0^{(1-\varepsilon) t} e^{-m'y}dy
 \\&&\rightarrow 0, \quad \mbox{ as } t\rightarrow\infty,
\end{eqnarray*}
and
\begin{eqnarray*}
 t^{2-2H}e^{-m't(1-\varepsilon)}\rightarrow 0, \quad \mbox{ as } t\rightarrow\infty.
\end{eqnarray*}
So, we conclude that the estimate (\ref{inner product of Z}) is obtained.
\end{proof}
\subsection{Construction  and asymptotic behavior of the estimators}

{\bf\large \noindent Case:  $X$ and $\Sigma$ are observed}\\

Combining (\ref{representationX with Z}), (\ref{expression Sigma with Z}), (\ref{limit a.s. of Q_{q,n}}), (\ref{ergodic stationary Y(theta,rho)}) and Lemma \ref{asymptotic R_{Q,q}} we conclude that
\begin{eqnarray*}
\lim_{n\longrightarrow\infty}\left(Q_{q,n}(X),Q_{q,n}(\Sigma)\right)\nonumber
&=&\lim_{n\longrightarrow\infty}\left(Q_{q,n}(Y^{\theta,\rho}),Q_{q,n}(\Sigma^{\theta,\rho})\right) \nonumber\\
&=&\left(\gamma_{Y^{\theta,\rho}}(q),\gamma_{\Sigma^{\theta,\rho}}(q)\right)\nonumber\\
&=&\frac{q!}{(\frac{q}{2})! 2^{q/2}}\left(\left(\eta_{X}(\theta,\rho)-1\right)^{q/2},\left(\eta_{\Sigma}(\theta,\rho)-1\right)^{q/2}\right)\nonumber
\\&:=&F_q(\theta,\rho).\label{limit a.s. 0f Q_{q,n}(X,Sigma)}
\end{eqnarray*}
 Hence, we obtain the following
estimator for $(\theta,\rho)$.
\begin{eqnarray*}
\left(\widehat{\theta}_{q,n},\widehat{\rho}_{q,n}\right)=G_q\left(Q_{q,n}(X),Q_{q,n}(\Sigma)\right).
\end{eqnarray*}where $G_q$ is the inverse function of  $F_q$.\\
By construction, we have the following strong consistence of $\left(\widehat{\theta}_{q,n},\widehat{\rho}_{q,n}\right)$.
\begin{theorem}Let $H\in\left(0,1\right)$. Then, as $n\longrightarrow\infty$
\begin{eqnarray} \left(\widehat{\theta}_{q,n},\widehat{\rho}_{q,n}\right) \longrightarrow (\theta,\rho)
\label{convergence a.s. of widehat{theta,rho}_{q,n}}\end{eqnarray} almost surely.
\end{theorem}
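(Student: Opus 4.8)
The plan is to derive the strong consistency directly from the joint almost sure convergence established just above the statement, together with the continuity of the map $G_q=F_q^{-1}$ used to define the estimator. First I would record the key input: combining the representation (\ref{representationX with Z}) and the analogous decomposition of $\Sigma$, the formula (\ref{limit a.s. of Q_{q,n}}), the ergodicity (\ref{ergodic stationary Y(theta,rho)}) of the stationary pair $(Z^m_t,Z^{m'}_t)$, and Lemma \ref{asymptotic R_{Q,q}} (which forces the remainder averages $\frac1n\sum_{k}R_{Q,q}$ to vanish almost surely), one obtains
\[
\left(Q_{q,n}(X),Q_{q,n}(\Sigma)\right)\longrightarrow F_q(\theta,\rho)\quad\text{almost surely as }n\to\infty.
\]

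Next, since by construction $\left(\widehat{\theta}_{q,n},\widehat{\rho}_{q,n}\right)=G_q\left(Q_{q,n}(X),Q_{q,n}(\Sigma)\right)$ with $G_q$ the inverse of $F_q$, the continuous mapping theorem would give
\[
\left(\widehat{\theta}_{q,n},\widehat{\rho}_{q,n}\right)\longrightarrow G_q\left(F_q(\theta,\rho)\right)=(\theta,\rho)\quad\text{almost surely},
\]
provided $G_q$ is continuous at $F_q(\theta,\rho)$. Thus the entire content of the proof lies in justifying that $G_q$ is well defined and continuous.

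The hard part will be establishing that $F_q$ is a homeomorphism onto its image. Since $F_q$ is the composition of $(\theta,\rho)\mapsto\left(\eta_X(\theta,\rho),\eta_\Sigma(\theta,\rho)\right)$ with the coordinatewise scalar map $x\mapsto\frac{q!}{(\frac q2)!\,2^{q/2}}(x-1)^{q/2}$, which is strictly monotone and hence invertible on the relevant range, it suffices to treat the map $\Psi:(\theta,\rho)\mapsto(\eta_X,\eta_\Sigma)$. I would invoke the inverse function theorem: differentiating the explicit expressions for $\eta_X(\theta,\rho)$ and $\eta_\Sigma(\theta,\rho)$ and checking that the Jacobian determinant of $\Psi$ does not vanish at admissible points yields a local $C^1$ inverse, whence continuity of $G_q$. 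The genuine subtlety is the symmetry $\eta_X(\theta,\rho)=\eta_X(\rho,\theta)$ and $\eta_\Sigma(\theta,\rho)=\eta_\Sigma(\rho,\theta)$, which precludes global injectivity on $\{\theta,\rho>0,\ \theta\neq\rho\}$; I would resolve this by restricting to the domain $\{0<\theta<\rho\}$ (equivalently, identifying the unknown with the unordered pair $\{\theta,\rho\}$), on which the nonvanishing Jacobian together with a monotonicity argument upgrades local injectivity to global injectivity, so that $G_q$, being $\Psi^{-1}$ composed with the scalar inverses, is a well-defined continuous function and the continuous mapping step above goes through.
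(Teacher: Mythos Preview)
Your approach is correct and coincides with the paper's: the paper offers no proof beyond the single phrase ``By construction,'' implicitly relying on the a.s.\ convergence $\left(Q_{q,n}(X),Q_{q,n}(\Sigma)\right)\to F_q(\theta,\rho)$ together with $G_q=F_q^{-1}$, exactly as you spell out. Your write-up is in fact more careful than the paper's: you correctly flag the symmetry $F_q(\theta,\rho)=F_q(\rho,\theta)$, which obstructs global invertibility and which the paper does not address (even in the quadratic case it asserts $F_2$ is a diffeomorphism on $(0,\infty)^2$ from a nonvanishing Jacobian, ignoring this symmetry); your resolution via restriction to $\{0<\theta<\rho\}$ is the right fix.
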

Combining  (\ref{expression
Sigma with Z}), (\ref{representationX with Z}), Lemma \ref{hypotheses OUFOU} and Theorem \ref{CLT and NCLT of Q_{q,n}(Z-exp)} we conclude the following result
\begin{theorem}Let $H\in\left(0, \frac34\right)$. Then
\begin{eqnarray}\sqrt{n}\left[\left(Q_{q,n}(X),Q_{q,n}(\Sigma)\right)-F_q(\theta,\rho)\right]
\overset{\rm \mathcal{L}}{\longrightarrow}  \mathcal{ N}\left(0,  \Gamma(\theta,\rho) \right)
\label{convergence in law vector Q}\end{eqnarray} where
\begin{eqnarray}\label{matrix Gamma(theta,rho)}
\Gamma(\theta,\rho)=\left(%
\begin{matrix}
\sigma_q^2(Y^{\theta,\rho}) & \sigma_q\left(Y^{\theta,\rho},\Sigma^{\theta,\rho}\right)  \\
\sigma_q\left(Y^{\theta,\rho},\Sigma{\theta,\rho}\right)  & \sigma_q^2(\Sigma^{\theta,\rho})
\end{matrix}%
\right)
\end{eqnarray}%
where
\begin{eqnarray*}
&&\sigma_q^2(Y^{\theta,\rho})= Var\left[H_q(Y^{\theta,\rho}_0)\right]+2\sum_{k=1}^{\infty}Cov\left[H_q(Y^{\theta,\rho}_0),H_q(Y^{\theta,\rho}_{k})\right]\\
&&\sigma_q^2(\Sigma^{\theta,\rho})= Var\left[H_q(\Sigma^{\theta,\rho}_0)\right]+2\sum_{k=1}^{\infty}Cov\left[H_q(\Sigma^{\theta,\rho}_0),H_q(\Sigma^{\theta,\rho}_{k})\right]\\
&&\sigma_q\left(Y^{\theta,\rho},\Sigma^{\theta,\rho}\right)=Cov\left[H_q(Y^{\theta,\rho}_0),H_q(\Sigma^{\theta,\rho}_{0})\right]
+\sum_{k\in \mathbb{Z}*}Cov\left[H_q(Y^{\theta,\rho}_0),H_q(\Sigma^{\theta,\rho}_{k})\right].
\end{eqnarray*}
\end{theorem}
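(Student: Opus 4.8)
The plan is to reduce the bivariate statement to a joint central limit theorem for the Hermite variations of the two \emph{stationary} Gaussian sequences $Z^{\theta,\rho}$ and $\Sigma^{\theta,\rho}$, and then to apply the multivariate fourth moment theorem (the Peccati--Tudor criterion, cf. \cite{NP-book}).

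First I would use the decompositions (\ref{representationX with Z}) and (\ref{expression Sigma with Z}) to write each observed process as its stationary part plus exponentially decaying remainders. This gives $Q_{q,n}(X)=Q_{q,n}(Z^{\theta,\rho})+\frac1n\sum_{k=0}^{n-1}R^X_k$ and $Q_{q,n}(\Sigma)=Q_{q,n}(\Sigma^{\theta,\rho})+\frac1n\sum_{k=0}^{n-1}R^\Sigma_k$, where $R^X_k$ and $R^\Sigma_k$ collect the differences of Hermite polynomials produced by the remainder terms $R(\theta,Z^\theta)$ and $R(\rho,Z^\rho)$. Applying the argument of Lemma \ref{asymptotic R_{Q,q}} termwise to each exponential piece shows that these averaged remainders are $O(1/n)$ in every $L^p(\Omega)$; after multiplication by $\sqrt n$ they are $O(n^{-1/2})$ in $L^1$, hence tend to zero in probability. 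By the (vector) Slutsky lemma the limit is unchanged when $(Q_{q,n}(X),Q_{q,n}(\Sigma))$ is replaced by $(Q_{q,n}(Z^{\theta,\rho}),Q_{q,n}(\Sigma^{\theta,\rho}))$, so it suffices to prove that $\big(V_{Q_{q,n}}(Z^{\theta,\rho}),\,V_{Q_{q,n}}(\Sigma^{\theta,\rho})\big)$ converges in law to $\mathcal N(0,\Gamma(\theta,\rho))$.

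Both coordinates are functionals of the single isonormal Gaussian process generated by $B^H$, and each admits the finite chaos expansion $\sum_{k=1}^{q/2}I_{2k}(\cdot)$ displayed above, living jointly in the sum of the Wiener chaoses of orders $2,4,\dots,q$. The Peccati--Tudor theorem then reduces the joint convergence to two conditions: (i) each scalar coordinate converges to a one-dimensional centered Gaussian; and (ii) the $2\times2$ covariance matrix of the pair converges to $\Gamma(\theta,\rho)$. Condition (i) is furnished by Theorem \ref{C-NC-LT} applied to each coordinate (with the coefficients $b_{q,2k}$ in place of $d_{q,2k}$): since $r_{Z^m}(k)\sim c\,|k|^{2H-2}$ by Lemma \ref{hypotheses OUFOU}, one has $\sum_k|r(k)|^2<\infty$ precisely when $H<\frac34$, placing us in the summable central regime, while the fourth-cumulant bound (\ref{kappa_{4}(V_{2,n}(Y))}) yields $\kappa_4(V_{2,n})\to0$ and hence marginal asymptotic normality.

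The crux, and the step I expect to be the main obstacle, is condition (ii). The diagonal limits follow by passing the variance formula (\ref{expression of var(V_{q,n})}) to the limit, producing $\sigma_q^2(Z^{\theta,\rho})$ and $\sigma_q^2(\Sigma^{\theta,\rho})$. For the off-diagonal entry I would evaluate $E\big[V_{Q_{q,n}}(Z^{\theta,\rho})\,V_{Q_{q,n}}(\Sigma^{\theta,\rho})\big]$ through the isometry for multiple integrals: by the product formula, $\mathrm{Cov}\big[H_q(Z^{\theta,\rho}_i),H_q(\Sigma^{\theta,\rho}_j)\big]$ is a finite combination, over the chaos orders $2k$, of constants times $\rho_{Z,\Sigma}(i-j)^{2k}$, where $\rho_{Z,\Sigma}$ denotes the normalized cross-correlation of the two jointly Gaussian sequences. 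The covariance of the vector is thus the Ces\`aro average $\frac1n\sum_{i,j=0}^{n-1}\mathrm{Cov}[H_q(Z^{\theta,\rho}_i),H_q(\Sigma^{\theta,\rho}_j)]$, and since $\rho_{Z,\Sigma}(k)$ decays like $|k|^{2H-2}$ by Lemma \ref{hypotheses OUFOU}, the governing series $\sum_k\rho_{Z,\Sigma}(k)^2$ is absolutely convergent exactly when $H<\frac34$; dominated convergence then identifies the limit with $\sigma_q(Z^{\theta,\rho},\Sigma^{\theta,\rho})$. The delicate point is that the cross-correlation is \emph{not} symmetric in $k$, which is why the limiting series must run over $\mathbb Z^*$ rather than twice a one-sided sum, and the lag-zero term together with the two asymmetric tails must be tracked separately to assemble into exactly the stated expression. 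Once (i) and (ii) hold, the multivariate fourth moment theorem delivers joint convergence to $\mathcal N(0,\Gamma(\theta,\rho))$, which combined with the reduction step completes the proof.
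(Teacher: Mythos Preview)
Your proposal is correct and follows the same skeleton as the paper: both rely on the decompositions (\ref{representationX with Z}) and (\ref{expression Sigma with Z}) to pass to the stationary pair $(Z^{\theta,\rho},\Sigma^{\theta,\rho})$, on Lemma~\ref{asymptotic R_{Q,q}} to kill the exponentially decaying remainders at rate $O(n^{-1/2})$, and on Lemma~\ref{hypotheses OUFOU} to ensure the $|k|^{2H-2}$ decay of all auto- and cross-correlations so that the relevant series converge for $H<\tfrac34$.

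The paper's proof is in fact just the single sentence ``Combining (\ref{expression Sigma with Z}), (\ref{representationX with Z}), Lemma \ref{hypotheses OUFOU} and Theorem \ref{CLT and NCLT of Q_{q,n}(Z-exp)} we conclude the following result'', and it cites only the \emph{univariate} Theorem~\ref{CLT and NCLT of Q_{q,n}(Z-exp)} without saying how the bivariate limit follows. Your write-up is strictly more complete: you correctly identify that the joint statement requires an additional multivariate ingredient --- the Peccati--Tudor criterion --- and you spell out both verification steps (marginal normality via the fourth-cumulant bound, and convergence of the full $2\times 2$ covariance matrix, including the asymmetric cross-term summed over $\mathbb Z^*$). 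This is exactly the missing glue, and your treatment of the off-diagonal entry (noting that $\rho_{Z,\Sigma}(k)\neq\rho_{Z,\Sigma}(-k)$ in general) is the right level of care. A minor notational point: the paper's statement writes $Y^{\theta,\rho}$ where the decomposition defines $Z^{\theta,\rho}$; your use of $Z^{\theta,\rho}$ is consistent with the definitions and should be preferred.
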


\begin{theorem}Let $H\in\left(0,\frac34\right)$. Then
\begin{eqnarray}\sqrt{n}\left(\widehat{\theta}_{q,n}-\theta,\widehat{\rho}_{q,n}-\rho\right)
\overset{\rm \mathcal{L}}{\longrightarrow}  \mathcal{ N}\left(0,\
J_{G_q}(\eta_X,\eta_{\Sigma})  \ \Gamma(\theta,\rho)\
  ^tJ_{G_q}(\eta_X(\theta,\rho),\eta_{\Sigma}(\theta,\rho))\right) \label{convergence
in law vector (theta,rho)}\end{eqnarray}
\end{theorem}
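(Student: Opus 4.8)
The plan is to obtain this bivariate central limit theorem from the joint convergence (\ref{convergence in law vector Q}) by the (multivariate) delta method, exploiting that the estimator is produced by applying the smooth inverse map $G_q=F_q^{-1}$ to the statistic $\left(Q_{q,n}(X),Q_{q,n}(\Sigma)\right)$. First I would record the two ingredients already at hand. The preceding theorem gives
\[
\sqrt{n}\left[\left(Q_{q,n}(X),Q_{q,n}(\Sigma)\right)-F_q(\theta,\rho)\right]\overset{\mathcal{L}}{\longrightarrow}\mathcal{N}\left(0,\Gamma(\theta,\rho)\right),
\]
so the vector statistic is $\sqrt{n}$-consistent for its mean $F_q(\theta,\rho)$ with a Gaussian fluctuation of covariance $\Gamma(\theta,\rho)$. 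By construction $\left(\widehat{\theta}_{q,n},\widehat{\rho}_{q,n}\right)=G_q\left(Q_{q,n}(X),Q_{q,n}(\Sigma)\right)$, and since $G_q=F_q^{-1}$ one has the base-point identity $(\theta,\rho)=G_q\left(F_q(\theta,\rho)\right)$.

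Next I would run the delta method itself. Writing $T_n=\left(Q_{q,n}(X),Q_{q,n}(\Sigma)\right)$ and $\mu=F_q(\theta,\rho)$, a first-order Taylor expansion of $G_q$ about $\mu$ gives
\[
G_q(T_n)=G_q(\mu)+J_{G_q}(\mu)\,(T_n-\mu)+o\left(|T_n-\mu|\right).
\]
Multiplying by $\sqrt{n}$, the linear term $J_{G_q}(\mu)\,\sqrt{n}(T_n-\mu)$ converges in law to $\mathcal{N}\left(0,\,J_{G_q}(\mu)\,\Gamma(\theta,\rho)\,{}^{t}J_{G_q}(\mu)\right)$ by applying the continuous linear map $J_{G_q}(\mu)$ to (\ref{convergence in law vector Q}); the remainder is $o_P(1)$ because $\sqrt{n}(T_n-\mu)$ is tight while $T_n-\mu\to0$ in probability (the consistency having already been established). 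A Slutsky argument then yields exactly (\ref{convergence in law vector (theta,rho)}), with $G_q(\mu)=(\theta,\rho)$ and $G_q(T_n)=\left(\widehat{\theta}_{q,n},\widehat{\rho}_{q,n}\right)$.

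The step requiring genuine verification, and the one I expect to be the main obstacle, is the $C^1$ smoothness and local invertibility of $G_q$ at $\mu=F_q(\theta,\rho)$, which is what makes both the Taylor expansion and the Jacobian $J_{G_q}$ legitimate. By the inverse function theorem this reduces to checking that $F_q$ is a $C^1$ local diffeomorphism near $(\theta,\rho)$, i.e. that $J_{F_q}(\theta,\rho)$ is non-singular, in which case $J_{G_q}(\mu)=\left(J_{F_q}(\theta,\rho)\right)^{-1}$. Since $F_q=\Phi\circ(\eta_X,\eta_\Sigma)$ with $\Phi(u,v)=\frac{q!}{(q/2)!\,2^{q/2}}\left((u-1)^{q/2},(v-1)^{q/2}\right)$, the chain rule factors $J_{F_q}$ as the diagonal derivative of $\Phi$ (non-vanishing as soon as $\eta_X(\theta,\rho)\neq1$ and $\eta_\Sigma(\theta,\rho)\neq1$) composed with the Jacobian of $(\theta,\rho)\mapsto\left(\eta_X(\theta,\rho),\eta_\Sigma(\theta,\rho)\right)$. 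Using the explicit formulas for $\eta_X$ and $\eta_\Sigma$ coming from Lemma \ref{hypotheses OUFOU}, I would then verify that this latter Jacobian is invertible for $\theta\neq\rho$, so that $G_q$ is indeed $C^1$ near $\mu$ and the delta method applies, completing the proof.
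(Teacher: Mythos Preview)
Your proposal is correct and follows essentially the same approach as the paper: both prove the result via the multivariate delta method, expanding $G_q$ by Taylor's formula around $F_q(\theta,\rho)$, using (\ref{convergence in law vector Q}) for the linear term and showing the remainder is negligible. Your write-up is in fact more careful about the $C^1$/invertibility hypothesis on $G_q$ than the paper, which only spells out the Jacobian computation in the subsequent quadratic case $q=2$; your observation that for $q>2$ one also needs $\eta_X(\theta,\rho)\neq1$ and $\eta_\Sigma(\theta,\rho)\neq1$ is a genuine (and easily overlooked) requirement.
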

\begin{proof}
By Taylor's
formula we can write
\begin{eqnarray*}\sqrt{n}\left(\widehat{\theta}_{q,n}-\theta,\widehat{\rho}_{q,n}-\rho\right)
&=&\sqrt{n}\left(Q_{q,n}(X)-\gamma_{Y^{\theta,\rho}}(q),Q_{q,n}(\Sigma)-\gamma_{\Sigma^{\theta,\rho}}(q)\right)\
^tJ_{G_q}(\eta_X,\eta_{\Sigma})+ d_n
\end{eqnarray*}
where $d_n$  converges in distribution to zero,
because
\begin{eqnarray*}
\|d_n\|\leqslant c(\theta,\rho,H)\sqrt{T_n}\|Q_{q,n}(X)-\gamma_{Y^{\theta,\rho}}(q),%
Q_{q,n}(\Sigma)-\gamma_{\Sigma^{\theta,\rho}}(q))\|^2.
\end{eqnarray*}
It is easy to see that if for any $w\in\Omega$ there exists $n_0(w)\in%
\mathbb{N}$ such that $X_n(w)=Y_n(w)$ for all $n\geq n_0(w)$ and $X_n\overset%
{law}{\longrightarrow}0$ as $n\rightarrow\infty$, then $Y_n\overset{law}{%
\longrightarrow}0$ as $n\rightarrow\infty$.\newline
Combining this and  above estimates we obtain the desired result.\end{proof}

\vspace{.5cm}
{\bf Quadratic case}:
Here we suppose that $q=2$. In this case we have:\\
 $F_2$ is a positive function of the
variables $\left( x,y\right) $ in $(0,+\infty )^{2}$ defined by: for every $(x,y)\in(0,+\infty )^{2}$
\begin{equation}
F_2(x,y)=H\Gamma(2H)\times\left\{
\begin{array}{lcl}
\frac{1}{y^2-x^2}
\left(y^{2-2H}-x^{2-2H},x^{-2H}-y^{-2H}\right)\quad\mbox{if }\ x\neq y  \\
\left((1-H)x^{-2H},Hx^{-2H-2}\right)\quad\mbox{if }\ y=x.
\end{array}%
\right.   \label{new system with Sigma}
\end{equation}
Since for every $(x,y)\in(0,+\infty )^{2}$ with $x\neq y$ \begin{equation*}
J_{F_2}\left( x,y\right) =%
\Gamma(2H+1)\begin{pmatrix}
\frac{\left( 1-H\right) x^{1-2H}\left( x^2-y^2\right)
-x\left(x^{2-2H}-y^{2-2H}\right)}{\left( x^2-y^2\right) ^{2}} & \frac{\left( 1-H\right) y^{1-2H}\left( y^2-x^2\right)
-y\left(y^{2-2H}-x^{2-2H}\right)}{\left( x^2-y^2\right) ^{2}} \\
\frac{Hx^{-2H-1}\left( x^2-y^2\right)+x\left( x^{-2H}-y^{-2H}\right) }{\left( x^2-y^2\right)
^{2}} &
\frac{Hy^{-2H-1}\left( y^2-x^2\right)+y\left( y^{-2H}-x^{-2H}\right) }{\left( x^2-y^2\right)
^{2}}
\end{pmatrix}%
  \label{Jacobian}
\end{equation*}%
the determinant of $J_{F_2}\left( x,y\right) $ is non-zero on in
$(0,+\infty )^{2} $. So, $F_2$ is a diffeomorphism in $(0,+\infty
)^{2} $ and its inverse $G_2$ has a Jacobian%
\begin{equation*}
J_{G_2}\left( a,b\right) =\frac{\Gamma(2H+1)}{\det J_{F_2}\left( x,y\right) }%
\begin{pmatrix}
\frac{Hy^{-2H-1}\left( y^2-x^2\right)+y\left( y^{-2H}-x^{-2H}\right) }{\left( x^2-y^2\right)
^{2}} & -\frac{\left( 1-H\right) y^{1-2H}\left( y^2-x^2\right)
-y\left(y^{2-2H}-x^{2-2H}\right)}{\left( x^2-y^2\right) ^{2}} \\
-\frac{Hx^{-2H-1}\left( x^2-y^2\right)+x\left( x^{-2H}-y^{-2H}\right) }{\left( x^2-y^2\right)
^{2}} &\frac{\left( 1-H\right) x^{1-2H}\left( x^2-y^2\right)
-x\left(x^{2-2H}-y^{2-2H}\right)}{\left( x^2-y^2\right) ^{2}}
\end{pmatrix}%
;
\end{equation*}where $\left( x,y\right) =G_2\left( a,b\right)$.

\subsection{  Fractional Ornstein-Uhlenbeck process with the second kind}
    Let $U=\left\{U_t,
t\geq0\right\}$ be a fractional Ornstein-Uhlenbeck process with the second kind defined
as
\begin{eqnarray} \label{FOUSK} U_0=0, \mbox{ and }\ dU_{t}=-\alpha U_{t}dt+dY^{(1)}_t,\quad t\geq0,
\end{eqnarray} where  $Y^{(1)}_t=\int_0^te^{-s}dB_{a_s}$ with $a_s=He^{\frac{s}{H}}$ and $B=\left\{B_t,
t\geq0\right\}$ is a fractional Brownian motion with Hurst parameter
$H\in(\frac{1}{2},1)$, and where $\alpha>0$ is a unknown real
parameter.
\\ The equation (\ref{FOUSK}) admits an explicit solution
\begin{eqnarray*} \label{explicit FOUSK}  U_{t}&=&e^{-\alpha t}\int_0^te^{\alpha
s}dY^{(1)}_{s}=e^{-\alpha t}\int_0^te^{(\alpha-1) s}dB_{a_s}
\\&=&H^{(1-\alpha)H}e^{-\alpha t}\int_{a_0}^{a_t}r^{(\alpha-1)
H}dB_r.
\end{eqnarray*}
Hence we can also write
\begin{eqnarray} \label{decomposition FOUSK}  U_{t}=U^{\alpha}_t+R(\alpha, U^{\alpha})
\end{eqnarray}where \[U^{\alpha}_t=e^{-\alpha t}\int_{-\infty}^te^{(\alpha-1) s}dB_{a_s}
=H^{(1-\alpha)H}e^{-\alpha t}\int_{0}^{a_t}r^{(\alpha-1)
H}dB_r.\]
\begin{lemma}\label{ppt FOUSK}Let $H\in(\frac{1}{2},1)$.
Then,
\begin{eqnarray}\label{var(U_0alpha)}E\left[\left(U^{\alpha}_0\right)^2\right]=\frac{(2H-1)H^{2H}}{\alpha}\beta(1-H+\alpha H,2H-1).
\end{eqnarray}
and for large $|t|$
\begin{eqnarray}\label{cov(U_0alpha,U_talpha)}r_{U^{\alpha}}(t)=E\left[U^{\alpha}_0U^{\alpha}_t\right]=O\left(e^{-\min\{\alpha,\frac{1-H}{H}\}t}\right).
\end{eqnarray}
\end{lemma}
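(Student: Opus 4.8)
The plan is to reduce both assertions to the analysis of a single weighted singular double integral, obtained from the covariance formula for Wiener integrals with respect to a fractional Brownian motion of index $H>\frac12$. Recall from the excerpt that $U^{\alpha}_t=H^{(1-\alpha)H}e^{-\alpha t}\int_0^{a_t}r^{(\alpha-1)H}\,dB_r$ with $a_t=He^{t/H}$; in particular, since $a_0=H$, one has $U^{\alpha}_0=H^{(1-\alpha)H}\int_0^{H}r^{(\alpha-1)H}\,dB_r$. Writing $c:=(\alpha-1)H$ and using the covariance identity for fBm Wiener integrals as in Lemma~\ref{inner product for OUFOU}, namely $E[\int f\,dB\int g\,dB]=H(2H-1)\int\int f(r)g(u)|r-u|^{2H-2}\,dr\,du$, both $E[(U^{\alpha}_0)^2]$ and $r_{U^{\alpha}}(t)=E[U^{\alpha}_0U^{\alpha}_t]$ become explicit double integrals of $r^{c}u^{c}|r-u|^{2H-2}$, the first over $[0,H]^2$ and the second over $[0,H]\times[0,a_t]$ carrying the prefactor $e^{-\alpha t}$.

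For the variance \eqref{var(U_0alpha)} I would apply this identity at $t=0$ and then rescale $r=Hx,\ u=Hy$ to pull every power of $H$ outside, reducing $E[(U^{\alpha}_0)^2]$ to $(2H-1)H^{1+2H}\,I$, where $I:=\int_0^1\int_0^1(xy)^{c}|x-y|^{2H-2}\,dx\,dy$ (here the rescaling exponent $2(1-\alpha)H+2c$ cancels). By symmetry and the substitution $x=yw$ in the inner integral, $I$ collapses to a Beta integral: the inner integral equals $y^{c+2H-1}\beta(c+1,2H-1)$, and integrating $y^{c}\cdot y^{c+2H-1}$ over $[0,1]$ gives $I=\beta(c+1,2H-1)/(c+H)$. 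Since $c+1=1-H+\alpha H$ and $c+H=\alpha H$, the factor $1/(c+H)=1/(\alpha H)$ absorbs one power of $H$, leaving exactly $\frac{(2H-1)H^{2H}}{\alpha}\beta(1-H+\alpha H,2H-1)$.

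For the decay \eqref{cov(U_0alpha,U_talpha)} I would write $r_{U^{\alpha}}(t)=\mathrm{const}\cdot e^{-\alpha t}J(t)$ with $J(t)=\int_0^H r^{c}\big(\int_0^{a_t}u^{c}|r-u|^{2H-2}\,du\big)\,dr$, and estimate the growth of the inner integral as $a_t\to\infty$. Splitting the $u$-integral at $u=2H$, the part on $[0,2H]$ is bounded uniformly in $t$, while on $[2H,a_t]$ one has $u\ge 2H\ge 2r$, hence $|r-u|\ge u/2$ and, since $2H-2<0$, $|r-u|^{2H-2}\le 2^{2-2H}u^{2H-2}$; this reduces the far part to $\int^{a_t}u^{c+2H-2}\,du$. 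The exponent $c+2H-1=\alpha H+H-1$ is negative or positive according as $\alpha<\frac{1-H}{H}$ or $\alpha>\frac{1-H}{H}$: in the first case the integral stays bounded and $r_{U^{\alpha}}(t)=O(e^{-\alpha t})$, while in the second it grows like $a_t^{c+2H-1}=H^{c+2H-1}e^{(\alpha-\frac{1-H}{H})t}$, so that the prefactor $e^{-\alpha t}$ leaves $O(e^{-\frac{1-H}{H}t})$. In both cases this is $O(e^{-\min\{\alpha,(1-H)/H\}t})$.

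The main obstacle is the uniform control of the singular inner integral as $a_t\to\infty$: one must justify the splitting together with the pointwise bound $|r-u|^{2H-2}\le 2^{2-2H}u^{2H-2}$ on the far region, and confirm integrability at the two endpoints. At $u=0$ this requires $c>-1$, i.e. $\alpha>(H-1)/H$, which holds automatically since $\alpha>0$; at $u=r$ it requires $2H-2>-1$, i.e. $H>\frac12$, our standing assumption. The only borderline situation is $\alpha=\frac{1-H}{H}$, where $\int^{a_t}u^{-1}\,du$ contributes an extra factor $\log a_t\sim t/H$; since $t\,e^{-\alpha t}=O(e^{-(\alpha-\varepsilon)t})$ for every $\varepsilon>0$, this logarithmic correction is harmlessly absorbed and does not affect the stated rate away from the threshold.
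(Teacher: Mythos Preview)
Your argument for \eqref{var(U_0alpha)} is correct and matches the paper's: both invoke the covariance kernel $H(2H-1)|r-u|^{2H-2}$, use the symmetry of the square, and collapse the inner integral to $\beta((\alpha-1)H+1,2H-1)$ via the substitution $x=yw$ (the paper writes $z=x/y$ and works on $[0,a_0]^2$ instead of first rescaling to $[0,1]^2$, a purely cosmetic difference).

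For \eqref{cov(U_0alpha,U_talpha)} the paper gives no argument at all---it simply cites \cite{KS}. Your route is genuinely different and more self-contained: you express $r_{U^{\alpha}}(t)$ as $e^{-\alpha t}$ times the double integral, split the inner $u$-integral at $2H$, and on the far region exploit $u\ge 2r$ to bound $|r-u|^{2H-2}\le 2^{2-2H}u^{2H-2}$, reducing everything to the growth of $\int^{a_t}u^{(\alpha-1)H+2H-2}\,du$. This cleanly separates the two regimes $\alpha\lessgtr(1-H)/H$ and identifies the logarithmic correction at the threshold. What the reference buys is brevity; what your argument buys is transparency and an explicit mechanism for the exponent $\min\{\alpha,(1-H)/H\}$. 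One small caveat: Lemma~\ref{inner product for OUFOU} as stated in the paper applies to disjoint intervals, whereas here $[0,H]\subset[0,a_t]$; for $H>\tfrac12$ the same kernel representation is of course valid on overlapping domains (it is the standard $L^2$ isometry for fBm Wiener integrals), so this is only a matter of citing the right version.
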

\begin{proof}
We prove the first point (\ref{var(U_0alpha)}). We have
\begin{eqnarray*}
E\left[\left(U^{\alpha}_0\right)^2\right]&=&H(2H-1)H^{2(1-\alpha)H}
\int_0^{a_0}dyy^{(\alpha-1) H}\int_0^{a_0}dx\ x^{(\alpha-1) H}|x-y|^{2H-2}\\&=&2H(2H-1)H^{2(1-\alpha)H}
\int_0^{a_0}dyy^{(\alpha-1) H}\int_0^{y}dx\ x^{(\alpha-1) H}(y-x)^{2H-2}\\&=&2H(2H-1)H^{2(1-\alpha)H}
\int_0^{a_0}dyy^{2\alpha H-1}\int_0^{1}dz\ z^{(\alpha-1) H}(1-z)^{2H-2}
\\&=&\frac{(2H-1)H^{2H}}{\alpha}\beta(1-H+\alpha H,2H-1).
\end{eqnarray*}
Thus (\ref{var(U_0alpha)}) is obtained.\\
For the point (\ref{cov(U_0alpha,U_talpha)}) see \cite{KS}.
\end{proof}\\
Combining (\ref{decomposition FOUSK}), Lemma \ref{asymptotic R_{Q,q}} and the fact that  $U^{\alpha}$ is ergodic we conclude that almost surely
\begin{eqnarray*}
\lim_{n\longrightarrow\infty}Q_{q,n}(U)\nonumber
&=&\lim_{n\longrightarrow\infty}Q_{q,n}(U^{\alpha}) \nonumber\\
&=&\gamma_{U^{\alpha}}(q)\nonumber\\
&=&\frac{q!}{(\frac{q}{2})! 2^{q/2}}\left(E\left[\left(U^{\alpha}_0\right)^2\right]-1\right)^{q/2}\nonumber\\
&=&\frac{q!}{(\frac{q}{2})! 2^{q/2}}\left(\frac{(2H-1)H^{2H}}{\alpha}\beta(1-H+\alpha H,2H-1)-1\right)^{q/2}\nonumber
\\&:=&\nu_q(\alpha).\label{limit a.s. 0f Q_{q,n}(U)}
\end{eqnarray*}
 Hence we obtain the following
estimator for $\alpha$
\begin{eqnarray*}
\widehat{\alpha}_{q,n}=\nu_q^{-1}\left[Q_{q,n}(U)\right].
\end{eqnarray*}
By construction, we have the following strong consistence of $\widehat{\alpha}_{q,n}$.
\begin{theorem}Let $H\in\left(\frac12,1\right)$. Then, as $n\longrightarrow\infty$
\begin{eqnarray} \widehat{\alpha}_{q,n} \longrightarrow \alpha
\label{convergence a.s. of widehat{theta}_{q,n}}\end{eqnarray} almost surely.
\end{theorem}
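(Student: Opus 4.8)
The plan is to deduce the almost sure convergence of $\widehat{\alpha}_{q,n}$ from the already-established almost sure convergence $Q_{q,n}(U)\longrightarrow\nu_q(\alpha)$ (the chain of equalities displayed just before the statement, which combines the decomposition (\ref{decomposition FOUSK}), Lemma \ref{asymptotic R_{Q,q}}, and the ergodicity of $U^{\alpha}$), together with a purely deterministic argument showing that $\nu_q^{-1}$ is continuous at the point $\nu_q(\alpha)$. Indeed, once $\nu_q^{-1}$ is known to be continuous there, writing $\widehat{\alpha}_{q,n}=\nu_q^{-1}\left(Q_{q,n}(U)\right)$ and applying the continuous mapping principle for almost sure convergence (on the probability-one event where $Q_{q,n}(U)\to\nu_q(\alpha)$, continuity of $\nu_q^{-1}$ forces $\nu_q^{-1}(Q_{q,n}(U))\to\nu_q^{-1}(\nu_q(\alpha))$) immediately gives $\widehat{\alpha}_{q,n}\to\alpha$ almost surely. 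Thus the whole matter reduces to an invertibility/continuity property of the deterministic map $\nu_q$.

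First I would write $\nu_q(\alpha)=\frac{q!}{(q/2)!\,2^{q/2}}\left(g(\alpha)-1\right)^{q/2}$ with $g(\alpha):=E\left[\left(U^{\alpha}_0\right)^2\right]=\frac{(2H-1)H^{2H}}{\alpha}\beta(1-H+\alpha H,2H-1)$, and observe that since $q$ is even the exponent $q/2$ is a positive integer, so $\nu_q$ is manifestly $C^1$ on $(0,\infty)$. It then suffices to show that $\nu_q$ is strictly monotone near the true value, equivalently $\nu_q'(\alpha)\neq0$; by the inverse function theorem this makes $\nu_q$ a local diffeomorphism at $\alpha$, whence $\nu_q^{-1}$ is continuous at $\nu_q(\alpha)$. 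Since $\nu_q'=\frac{q!}{(q/2)!\,2^{q/2}}\cdot\frac q2\,(g-1)^{q/2-1}g'$, this reduces to the strict monotonicity of $g$ (and, for $q\geq4$, the mild genericity condition $g(\alpha)\neq1$; for $q=2$ no such condition is needed).

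The main obstacle is therefore the analytic fact that $g$ is strictly monotone. I would establish it by rewriting $g$ via $\beta(a,b)=\Gamma(a)\Gamma(b)/\Gamma(a+b)$ and $(2H-1)\Gamma(2H-1)=\Gamma(2H)$ as $g(\alpha)=H^{2H}\Gamma(2H)\,\alpha^{-1}\,\Gamma(1-H+\alpha H)/\Gamma(H+\alpha H)$, and taking the logarithmic derivative
\[\frac{g'(\alpha)}{g(\alpha)}=-\frac1\alpha+H\bigl[\psi(1-H+\alpha H)-\psi(H+\alpha H)\bigr],\]
where $\psi=\Gamma'/\Gamma$ is the digamma function. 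Because $H>\frac12$ we have $1-H+\alpha H<H+\alpha H$, and since $\psi$ is strictly increasing on $(0,\infty)$ the bracket is strictly negative; hence $g'(\alpha)/g(\alpha)<0$, so $g$ is strictly decreasing on $(0,\infty)$. A boundary check confirms it is in fact a bijection onto $(0,\infty)$: $g(\alpha)\to\infty$ as $\alpha\to0^{+}$, while, using $\Gamma(x+a)/\Gamma(x+b)\sim x^{a-b}$, one gets $g(\alpha)\sim H\Gamma(2H)\alpha^{-2H}\to0$ as $\alpha\to\infty$. Consequently $g'(\alpha)\neq0$ for every $\alpha>0$, $\nu_q$ is locally invertible with continuous inverse at $\alpha$, and the continuous mapping argument of the first paragraph completes the proof. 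This is the same mechanism already used for $\widehat{\theta}_{q,n}$ in the fractional Ornstein--Uhlenbeck case.
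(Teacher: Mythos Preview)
Your proposal is correct and follows the same route the paper takes: the paper gives no explicit proof, writing only ``By construction, we have the following strong consistence'', i.e.\ it applies $\nu_q^{-1}$ to the already-established convergence $Q_{q,n}(U)\to\nu_q(\alpha)$ and regards the result as immediate. You do the same, but you actually supply the analytic justification the paper omits---the digamma computation showing $g(\alpha)=E[(U_0^{\alpha})^2]$ is strictly decreasing on $(0,\infty)$, hence $\nu_q$ is locally invertible with continuous inverse at the true value---so your argument is strictly more complete; your caveat $g(\alpha)\neq1$ for $q\geq4$ is a genuine subtlety (when $q/2$ is even, $\nu_q$ is not globally injective) that the paper does not address.
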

Combining  (\ref{decomposition FOUSK}) and Theorem \ref{CLT and NCLT of Q_{q,n}(Z-exp)}, Lemma \ref{ppt FOUSK} and Lemma \ref{asymptotic R_{Q,q}} we conclude the following result
\begin{theorem}\label{asymptotic distribution of Q_{q,n}(FOUSK)}
 Denote $N\sim\mathcal{N}(0,1)$. If $H\in(\frac12,1)$, then there exist $C$ depending on $q$, $H$ and $\alpha$ such that
\begin{eqnarray}
 d_W\left(\sqrt{\frac{n}{E\left[V_{q,n}^2(U^{\alpha})\right]}}\left(\nu_q\left(\widehat{\alpha}_{q,n}\right)-\nu_q(\alpha)\right),N\right)&\leq&  C n^{-\frac14}.
\end{eqnarray}
In particular,
\begin{eqnarray}\sqrt{n}\left(\nu_q\left(\widehat{\alpha}_{q,n}\right)-\nu_q(\alpha)\right)\overset{law}{\longrightarrow}
\mathcal{N}\left(0,\sigma^2(U^{\alpha})\right)
\end{eqnarray}where $\sigma^2(U^{\alpha})=\sum_{k=1}^{q/2}b^2_{q,2k}(U^{\alpha}) (2k)!\left(1+2\sum_{j=1}^{\infty}\frac{|r_{U^{\alpha}}(j)|^{2k}}{r_{U^{\alpha}}^2(0)}\right)=\lim_{n\rightarrow\infty}E\left[V_{q,n}^2(U^{\alpha})\right]$.
\end{theorem}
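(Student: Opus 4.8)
The plan is to read the claim as a direct specialization of Theorem \ref{CLT and NCLT of Q_{q,n}(Z-exp)}, the decisive input being that the stationary process $U^{\alpha}$ has \emph{exponentially} decaying covariance. First I would rewrite the left-hand side in terms of $Q_{q,n}(U)$: since $\widehat{\alpha}_{q,n}=\nu_q^{-1}[Q_{q,n}(U)]$ and $\nu_q(\alpha)=\gamma_{U^{\alpha}}(q)$, one has $\nu_q(\widehat{\alpha}_{q,n})-\nu_q(\alpha)=Q_{q,n}(U)-\gamma_{U^{\alpha}}(q)$. The decomposition (\ref{decomposition FOUSK}) reads $U_k=U^{\alpha}_k-e^{-\alpha k}U^{\alpha}_0$, so $Q_{q,n}(U)=Q_{q,n}(U^{\alpha}_k-e^{-\alpha k}U^{\alpha}_0)$ and we are exactly in the setting of Theorem \ref{CLT and NCLT of Q_{q,n}(Z-exp)} with $Z=U^{\alpha}$ and $\lambda=\alpha$.

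The decisive observation is Lemma \ref{ppt FOUSK}: for $H\in(\frac12,1)$ we have $|r_{U^{\alpha}}(k)|=O(e^{-ck})$ with $c=\min\{\alpha,\frac{1-H}{H}\}>0$, whence $r_Y(k)=r_{U^{\alpha}}(k)/r_{U^{\alpha}}(0)$ also decays exponentially and $\sum_{k\in\mathbb{Z}}|r_Y(k)|^{p}<\infty$ for every $p\geq1$. In particular $\sum_{k\in\mathbb{Z}}|r_Y(k)|^{2}<\infty$, so the finite-variance branch (\ref{d_{W}(Q_{q,n}(Z-exp),N) finite}) of Theorem \ref{CLT and NCLT of Q_{q,n}(Z-exp)} applies, and $\sigma^2(U^{\alpha})=v_{Q_q}(U^{\alpha})=\lim_n E[V_{q,n}^2(U^{\alpha})]$ is a finite, strictly positive constant (positivity is clear because the defining series has nonnegative terms and the leading one, carrying $b_{q,q}^2(U^{\alpha})=r_{U^{\alpha}}^{q}(0)>0$, is strictly positive). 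It then remains to bound each of the three summands in (\ref{d_{W}(Q_{q,n}(Z-exp),N) finite}).

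Next I would estimate those terms. The first is $\frac{1}{v_{Q_q}}\sqrt{v_{Q_q}/n}=O(n^{-1/2})$ since $v_{Q_q}(U^{\alpha})>0$. For the second, (\ref{kappa_{4}(V_{2,n}(Y))}) gives $\kappa_4(V_{2,n}(Y))\leq Cn^{-1}\bigl(\sum_{|k|<n}|r_Y(k)|^{4/3}\bigr)^{3}\leq Cn^{-1}$, the $4/3$-sum being bounded uniformly in $n$ by the exponential decay; since $E[V_{2,n}^2(Y)]$ stays bounded, this summand is $O(\sqrt{n^{-1/2}})=O(n^{-1/4})$. For the third, (\ref{expression of var(V_{q,n})}) expresses $v_{Q_q}(U^{\alpha})-E[V_{q,n}^2(U^{\alpha})]$ through the tails $\sum_{j\geq n}|r_Y(j)|^{2k}$ (exponentially small) and $\frac1n\sum_{j<n}j\,|r_Y(j)|^{2k}=O(n^{-1})$, hence it is $O(n^{-1})$. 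Collecting, the $n^{-1/4}$ term dominates and yields the claimed bound, with constants depending only on $q,H,\alpha$. Finally, since this bound tends to $0$, the normalized quantity converges to $N\sim\mathcal{N}(0,1)$ in law; combined with $E[V_{q,n}^2(U^{\alpha})]\to\sigma^2(U^{\alpha})\in(0,\infty)$ and Slutsky's lemma, this gives $\sqrt{n}(\nu_q(\widehat{\alpha}_{q,n})-\nu_q(\alpha))\overset{law}{\longrightarrow}\mathcal{N}(0,\sigma^2(U^{\alpha}))$, with $\sigma^2(U^{\alpha})$ read off from (\ref{expression of var(V_{q,n})}).

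The main obstacle is conceptual rather than computational: one must recognize that the exponential decay in Lemma \ref{ppt FOUSK} collapses the delicate, $H$-dependent rate analysis of the fBm-driven case (Theorem \ref{asymptotic distribution of Q_{q,n}(FOU)}) into a single universal $n^{-1/4}$ rate, because every covariance sum that appears converges and is bounded uniformly in $n$. The only genuine care needed is the uniform boundedness of $\sum_{|k|<n}|r_Y(k)|^{4/3}$ and the nondegeneracy $\sigma^2(U^{\alpha})>0$; the exponential remainder $R(\alpha,U^{\alpha})$ is harmless, contributing only $O(n^{-1/2})$ to the Wasserstein distance via the $L^p$ estimate of Lemma \ref{asymptotic R_{Q,q}}.
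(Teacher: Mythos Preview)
Your proposal is correct and follows exactly the route the paper indicates: combine the decomposition (\ref{decomposition FOUSK}), Lemma \ref{ppt FOUSK}, Lemma \ref{asymptotic R_{Q,q}}, and Theorem \ref{CLT and NCLT of Q_{q,n}(Z-exp)}, using that the exponential covariance decay forces every relevant sum to be uniformly bounded so that (\ref{kappa_{4}(V_{2,n}(Y))}) yields $\kappa_4(V_{2,n}(Y))=O(n^{-1})$ and hence the $n^{-1/4}$ rate. One cosmetic point: since the statement normalizes by $E[V_{q,n}^2(U^{\alpha})]$ rather than by its limit $v_{Q_q}(U^{\alpha})$, it is slightly cleaner to invoke the bound (\ref{d_{W}(Q_{q,n}(Z-exp),N)}) directly instead of (\ref{d_{W}(Q_{q,n}(Z-exp),N) finite}); your own observation that $|v_{Q_q}-E[V_{q,n}^2]|=O(n^{-1})$ shows the two normalizations differ only by a term dominated by $n^{-1/4}$, so either choice works.
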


\begin{theorem}\label{asymptotic distrivution of widehat{theta}_{q,n}}
Let $H\in(\frac12,1)$. Then
\begin{eqnarray}\sqrt{\frac{n}{E\left[V_{q,n}^2(U^{\alpha})\right]}}\left(\widehat{\alpha}_{q,n}-\alpha\right)\overset{law}{\longrightarrow}\mathcal{ N}\left(0, \left(\nu'_q(\alpha)\right)^{-2}\right).
\end{eqnarray}
\end{theorem}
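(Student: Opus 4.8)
The plan is to deduce this statement from Theorem~\ref{asymptotic distribution of Q_{q,n}(FOUSK)} by the delta method, in exact analogy with the proof already given for the FOU estimator $\widehat{\theta}_{q,n}$. The starting point is the observation that, because $r_{U^{\alpha}}$ decays exponentially by Lemma~\ref{ppt FOUSK}, the normalization $E\left[V_{q,n}^2(U^{\alpha})\right]$ converges to the finite constant $\sigma^2(U^{\alpha})$, so the scaling $\sqrt{n/E\left[V_{q,n}^2(U^{\alpha})\right]}$ is of exact order $\sqrt{n}$; the preceding theorem then supplies the asymptotic normality of the $\nu_q$-transformed statistic, which is the input I want to transport back to $\widehat{\alpha}_{q,n}$ itself.

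First I would invoke the mean value theorem for the $C^1$ function $\nu_q$ on $(0,\infty)$: since $\widehat{\alpha}_{q,n}=\nu_q^{-1}\left[Q_{q,n}(U)\right]$,
\[
\nu_q\left(\widehat{\alpha}_{q,n}\right)-\nu_q(\alpha)=\nu_q'\left(\zeta_{q,n}\right)\left(\widehat{\alpha}_{q,n}-\alpha\right),
\]
where $\zeta_{q,n}$ is a random point lying between $\alpha$ and $\widehat{\alpha}_{q,n}$. Dividing and rearranging yields the identity
\[
\sqrt{\frac{n}{E\left[V_{q,n}^2(U^{\alpha})\right]}}\left(\widehat{\alpha}_{q,n}-\alpha\right)=\frac{1}{\nu_q'\left(\zeta_{q,n}\right)}\sqrt{\frac{n}{E\left[V_{q,n}^2(U^{\alpha})\right]}}\left(\nu_q\left(\widehat{\alpha}_{q,n}\right)-\nu_q(\alpha)\right),
\]
which is the decomposition on which everything rests.

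Next I would control the two factors separately. The strong consistency established above gives $\widehat{\alpha}_{q,n}\to\alpha$ almost surely, hence $\zeta_{q,n}\to\alpha$ almost surely, and by continuity of $\nu_q'$ this forces $\nu_q'\left(\zeta_{q,n}\right)\to\nu_q'(\alpha)$ almost surely, so the prefactor converges in probability to the constant $\left(\nu_q'(\alpha)\right)^{-1}$. For the second factor, Theorem~\ref{asymptotic distribution of Q_{q,n}(FOUSK)} asserts that its Wasserstein distance to $N\sim\mathcal{N}(0,1)$ is bounded by $Cn^{-1/4}\to0$, so it converges in law to $N$. Slutsky's theorem then gives convergence in law of the product to $\left(\nu_q'(\alpha)\right)^{-1}N$, that is, to $\mathcal{N}\left(0,\left(\nu_q'(\alpha)\right)^{-2}\right)$, which is the claim.

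The only point requiring genuine care — and the step I expect to be the main obstacle — is verifying that $\nu_q'(\alpha)\neq0$ on $(0,\infty)$, since this is what simultaneously guarantees that $\nu_q$ is locally invertible (so that $\widehat{\alpha}_{q,n}$ is well defined) and that the limiting variance $\left(\nu_q'(\alpha)\right)^{-2}$ is finite. This reduces, after differentiating the explicit formula for $\nu_q$, to checking that $\alpha\mapsto\frac{(2H-1)H^{2H}}{\alpha}\beta(1-H+\alpha H,2H-1)$ is strictly monotone on $(0,\infty)$; this monotonicity verification, rather than the probabilistic passage to the limit, is the delicate part.
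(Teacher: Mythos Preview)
Your proposal is correct and follows essentially the same approach as the paper: apply the mean value theorem to write $\nu_q(\widehat{\alpha}_{q,n})-\nu_q(\alpha)=\nu_q'(\xi_{q,n})(\widehat{\alpha}_{q,n}-\alpha)$ for some intermediate point $\xi_{q,n}$, then combine with Theorem~\ref{asymptotic distribution of Q_{q,n}(FOUSK)} (and Slutsky) to conclude. Your discussion of the nondegeneracy condition $\nu_q'(\alpha)\neq 0$ is more careful than the paper, which simply takes it for granted.
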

\begin{proof} We can write
\begin{eqnarray*}\sqrt{n}\left(\nu_q(\widehat{\alpha}_{q,n})-\nu_q(\alpha)\right)
&=&\nu'_q(\xi_{q,n})\sqrt{n}\left(\widehat{\alpha}_{q,n}-\alpha\right)
\end{eqnarray*}
where $\xi_{q,n}$ is a random variable between $\alpha$ and $\widehat{\alpha}_{q,n}$.\\
Combining this with Theorem \ref{asymptotic distribution of Q_{q,n}(FOUSK)} we obtain the desired conclusion.
\end{proof}

\vspace{.5cm}
{\bf Quadradic case:}
Here we suppose that $q=2$. We will study the asymptotic distribution of $\widehat{\alpha}_{2,n}$.
\begin{theorem}Let $H\in\left(\frac12,1\right)$. Then there exists $C$ depending on $H$ and $\alpha$ such that
\begin{equation}   \label{d_W(F_{2,n}(U),N) for alpha}
 d_W\left(\sqrt{\frac{n}{E\left[V_{2,n}^2(U^{\alpha})\right]}}\left(\nu_2\left(\widehat{\alpha}_{2,n}\right)-\nu_2(\alpha)\right),N\right)\leq \frac{C }{\sqrt{n}}.
\end{equation}
\end{theorem}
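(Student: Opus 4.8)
The plan is to reduce the statement to the quadratic estimate of Theorem \ref{CLT and NCLT of Q_{2,n}(Z-exp)} applied to the stationary process $U^{\alpha}$. By construction of the estimator, $\widehat{\alpha}_{2,n}=\nu_2^{-1}[Q_{2,n}(U)]$, so that $\nu_2(\widehat{\alpha}_{2,n})=Q_{2,n}(U)$, while $\nu_2(\alpha)=\gamma_{U^{\alpha}}(2)$. Hence the random variable appearing inside $d_W$ is exactly
\[
\sqrt{\frac{n}{E\left[V_{2,n}^2(U^{\alpha})\right]}}\left(Q_{2,n}(U)-\gamma_{U^{\alpha}}(2)\right).
\]
Using the decomposition (\ref{decomposition FOUSK}), the observed process $U$ plays the role of $Z_k-e^{-\lambda k}Z_0$ with $Z=U^{\alpha}$ and $\lambda=\alpha$, the transient term $R(\alpha,U^{\alpha})$ being controlled, exactly as in (\ref{decomposition of Q_{q,n}(X)}), by Lemma \ref{asymptotic R_{Q,q}}.

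First I would invoke the quadratic-case bound (\ref{d_{W}(F_{2,n}(Z-exp),N)}), which is valid whenever $\sum_{k\in\mathbb{Z}}|r_Y(k)|^2<\infty$ and gives
\[
d_{W}\left(\sqrt{\frac{n}{E\left[V_{2,n}^2(U^{\alpha})\right]}}\left(Q_{2,n}(U)-\gamma_{U^{\alpha}}(2)\right),N\right)\leq \frac{C}{\sqrt{n}}\left[1+\left(\sum_{|k|<n}|r_Y(k)|^{3/2}\right)^2\right],
\]
where $r_Y(k)=r_{U^{\alpha}}(k)/r_{U^{\alpha}}(0)$. It therefore suffices to verify the summability hypothesis and that the bracketed factor stays bounded in $n$.

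Both points follow from the exponential covariance decay in Lemma \ref{ppt FOUSK}: since $r_{U^{\alpha}}(t)=O\big(e^{-ct}\big)$ with $c=\min\{\alpha,(1-H)/H\}>0$ and $r_{U^{\alpha}}(0)=E[(U^{\alpha}_0)^2]$ is a finite positive constant, the normalized covariance $r_Y$ decays exponentially as well. Consequently $\sum_{k\in\mathbb{Z}}|r_Y(k)|^2<\infty$, and $\sum_{|k|<n}|r_Y(k)|^{3/2}\leq\sum_{k\in\mathbb{Z}}|r_Y(k)|^{3/2}=:M<\infty$ uniformly in $n$, so the bracket is at most $1+M^2$ and the claimed rate $C/\sqrt{n}$ follows. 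I would also record that the normalization is non-degenerate: reading off the $q=2$ instance of (\ref{expression of var(V_{q,n})}), exponential decay gives $\sum_{j\geq1}|r_Y(j)|^2<\infty$ together with $\tfrac{1}{n}\sum_{j=1}^{n-1}j|r_Y(j)|^2\to0$, so $E[V_{2,n}^2(U^{\alpha})]$ converges to a strictly positive limit and $\sqrt{n/E[V_{2,n}^2(U^{\alpha})]}$ is of exact order $\sqrt{n}$.

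The only genuine obstacle is thus structural, and it is dissolved by Lemma \ref{ppt FOUSK}: the second-kind process has exponentially decaying memory for every $H\in(\tfrac12,1)$, so one is always in the summable, central-limit regime and never meets the long-memory phenomena that force the slower rates and Rosenblatt-type limits of the first-kind case. This is precisely why the quadratic chaos estimate (\ref{d_{TV}(F_{2,n}(Z),N)}) delivers the optimal $n^{-1/2}$ rate here, improving on the $n^{-1/4}$ bound of the general-$q$ Theorem \ref{asymptotic distribution of Q_{q,n}(FOUSK)}.
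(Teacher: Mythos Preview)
Your proposal is correct and follows essentially the same route as the paper's own proof: both invoke the exponential covariance decay of Lemma~\ref{ppt FOUSK} to obtain $\sum_{k}|r_{U^{\alpha}}(k)|^{3/2}<\infty$ and $\sum_{k}|r_{U^{\alpha}}(k)|^{2}<\infty$, then feed this into the quadratic-case bound (\ref{d_{W}(F_{2,n}(Z-exp),N)}) of Theorem~\ref{CLT and NCLT of Q_{2,n}(Z-exp)}. Your write-up simply makes explicit the identifications $\nu_2(\widehat{\alpha}_{2,n})=Q_{2,n}(U)$, $\nu_2(\alpha)=\gamma_{U^{\alpha}}(2)$, and the boundedness of the bracketed factor, which the paper leaves implicit.
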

\begin{proof}From Lemma \ref{ppt FOUSK} we have for any $H\in(\frac12,1)$,  $\sum_{k\in\mathbb{Z}}|r_{Z^{\alpha}}(k)|^{3/2}<\infty$ and $\sum_{k\in\mathbb{Z}}|r_{Z^{\alpha}}(k)|^{2}<\infty$. Hence by Theorem \ref{CLT and NCLT of Q_{2,n}(Z-exp)} the conclusion is obtained.
\end{proof}

\bigskip

 {\small

}


\begin{thebibliography}{99}


\bibitem{AN} {\small Al\`os, E. and Nualart, D. (2003). Stochastic
integration with respect to the fractional Brownian motion. Stoch.
stoch. reports \textbf{75}, no. 3, 129-152. }

\bibitem{AM} {\small Azmoodeh, E. and Morlanes, G. I. (2013). Drift parameter
estimation for fractional Ornstein-Uhlenbeck process of the second
kind. Statistics: A Journal of Theoretical and Applied Statistics. DOI: 10.1080/02331888.2013.863888. }

\bibitem{AV} {\small Azmoodeh, E. and  Viitasaari, L. (2013). Parameter
estimation based on discrete observations of fractional
Ornstein-Uhlenbeck process of the second kind.
http://arxiv.org/abs/1304.2466. }

\bibitem{BEO} {\small  Belfadli, R., Es-Sebaiy,  K., and  Ouknine,  Y. (2011).
Parameter Estimation for Fractional Ornstein-Uhlenbeck Processes:
Non-Ergodic Case. Frontiers in Science and Engineering (An
International Journal Edited by Hassan II Academy of Science and
Technology). 1, no. 1, 1-16. }

\bibitem{BPS} {\small Bercu, B., Proia, F., and Savy, N. (2012). On
Ornstein-Uhlenbeck driven by Ornstein-Uhlenbeck processes.
http://arxiv.org/abs/1212.2800. }

\bibitem{BBNP} {\small Bierm\'e, H., Bonami, A.,  Nourdin, I. and Peccati, G. (2012). Optimal Berry-Esseen rates on the Wiener space: the barrier of third and fourth cumulants. Preprint }

\bibitem{BN} {\small  Breton, J.-C. and  Nourdin, I. (2008): Error bounds on the non-normal approximation of Hermite power variations of fractional Brownian motion. Electron. Comm. Probab. 13, 482-493.}

\bibitem{BI} {\small Brouste, A., Iacus, S. M. (2012). Parameter estimation
for the discretely observed fractional Ornstein-Uhlenbeck process
and the Yuima R package. Computational Sta- tistics. DOI
10.1007/s00180-012-0365-6. }

\bibitem{CE} {\small  Cénac, P. and Es-Sebaiy, K. (2015). Almost sure central limit theorems for random ratios and applications to LSE for fractional Ornstein-Uhlenbeck processes.  To appear in Probability and Mathematical Statistics.}

\bibitem{CKM} {\small Cheridito, P., Kawaguchi, H., and Maejima, M. (2003). Fractional Ornstein-Uhlenbeck processes,
Electr. J. Prob. 8 1{14}.}


\bibitem{DM} {\small  Dobrushin, R. L. and  Major, P. (1979). Non-central limit theorems for nonlinear functionals of
Gaussian fields. Z. Wahrsch. verw. Gebiete, vol. 50, pp. 27-52. MR0550122.}

\bibitem{EET} {\small  El Onsy, B., Es-Sebaiy, K., and Tudor, C. (2014). Statistical analysis of the non-ergodic fractional Ornstein-Uhlenbeck process of the second kind.}

\bibitem{EEV} {\small El Onsy, B., Es-Sebaiy, K., and Viens, F. (2014). Parameter Estimation for Ornstein-Uhlenbeck driven by
fractional Ornstein-Uhlenbeck processes. Preprint. }


\bibitem{HN} {\small  Hu, Y. and  Nualart, D. (2010). Parameter estimation for
fractional Ornstein-Uhlenbeck processes. Statist. Probab. Lett. 80,
1030-1038. }

\bibitem{HS} Hu, Y. and  Song, J. (2013).  Parameter estimation for fractional
Ornstein- Uhlenbeck processes with discrete observations.  F. Viens
et al (eds), Malliavin Calculus and Stochastic Analysis: A
Festschrift in Honor of David Nualart, Springer Proceedings in
Mathematics and Statistics  {\bf 34},  427-442.


\bibitem{KS} {\small Kaarakka, T. and Salminen, P. (2011). On fractional Ornstain-Uhlenbeck Processes. Communications on stochastic analysis
Vol. 5, No. 1, 121-133.}

\bibitem{KL} {\small \textrm{\ Kleptsyna, M. and Le Breton, A. (2002).
Statistical analysis of the fractional Ornstein- Uhlenbeck type
process. Statistical Inference for Stochastic Processes \textbf{5},
229-241. }}

\bibitem{KN}{\small \textrm{\  Kloeden, P. and  Neuenkirch, A. (2007). The pathwise convergence of
approximation schemes for stochastic differential equations. LMS J.
Comp. Math. 10, 235-253.}}

\bibitem{NV2014} {\small  Neufcourt, L. and  Viens, F. (2014). A third-moment theorem and precise asymptotics for variations of stationary Gaussian sequences. Preprint }


\bibitem{NT} {\small  Neuenkirch, A. and Tindel, S. (2011). A least
square-type procedure for parameter estimation in stochastic
differential equations with additive fractional noise.
http://arxiv.org/abs/1111.1816. }


\bibitem{NP2013} {\small  Nourdin, I. and Peccati, G. (2013). The optimal fourth moment theorem. Preprint }

\bibitem{NPP} {\small  Nourdin, I., Peccati, G. and Podolskij, M. (2013). Quantitative Breuer-Major Theorems }

\bibitem{NP-book} {\small   Nourdin, I. and  Peccati, G. (2012). Normal approximations with Malliavin calculus :
from Stein's method to universality. Cambridge Tracts in Mathematics. Cambridge
University Press.}


\bibitem{nualart-book} {\small \textrm{ Nualart, D. (2006). The Malliavin
calculus and related topics. Springer-Verlag, Berlin, second
edition. }}


\bibitem{PT} {\small \textrm{ Pipiras, V. and  Taqqu, M.S. (2000). Integration
questions related to fractional Brownian motion. Probab. Theory Rel.
Fields. 118, no. 2, 251-291. }}


\bibitem{TV} {\small \textrm{\ Tudor, C. Viens, F. (2007). Statistical
aspects of the fractional stochastic calculus. Ann. Statist.
\textbf{35}, no. 3, 1183-1212. }}




\end{thebibliography}
\end{document}